\numberwithin{equation}{section}
\patchcmd{\thesubsection}{\arabic}{\Alph}{}{}
\patchcmd{\@seccntformat}{\@secnumfont}{%
\@secnumfont\expandafter\protect\csname format#1\endcsname}{}{}
\patchcmd{\@startsection}{\@afterindenttrue}{\@afterindentfalse}{}{}
\patchcmd{\subsection}{-.5em}{.3\linespacing}{}{}
\theoremstyle{plain}
\newtheorem{theorem}{Theorem}[section]
\newtheorem{question}[theorem]{Question}
\newtheorem{proposition}[theorem]{Proposition}
\newtheorem{corollary}[theorem]{Corollary}
\theoremstyle{remark}
\newtheorem{remark}[theorem]{Remark}
\newcommand{\Der}[3][]{\ensuremath{\mathrm{Der}_{#1} (#2, #3)}}
\newcommand{\Ker}[1]{\ensuremath{\mathrm{Ker} (#1)}}
\newcommand{\SKer}[1]{\ensuremath{\mathcal{K}er (#1)}}
\newcommand{\cat}[1]{\ensuremath{\mathcal{#1}}}
\newcommand{\at}[2][]{\ensuremath{\mathrm{at}_{#1} (#2)}}
\newcommand{\END}[2][]{\ensuremath{\mathcal{E}\mathit{nd}_{#1} (#2)}}
\newcommand{\id}[1]{\ensuremath{\mathbf{1}_{#1}}}
\renewcommand{\dim}[2][]{\ensuremath{\mathrm{dim}_{#1}(#2)}}
\newcommand{\N}{\ensuremath{\mathbb{N}}}
\newcommand{\R}{\ensuremath{\mathbb{R}}}
\newcommand{\C}{\ensuremath{\mathbb{C}}}
\newcommand{\ringed}[1]{\ensuremath{(#1,\,\struct{#1})}}
\newcommand{\struct}[1]{\ensuremath{\mathcal{O}_{#1}}}
\newcommand{\HOM}[3][]{%
\ensuremath{\mathcal{H}\mathit{om}_{#1}(#2,\, #3)}}
\newcommand{\DER}[3][]{%
\ensuremath{\mathcal{D}\mathit{er}_{#1}(#2,\, #3)}}
\newcommand{\DIFF}[4][]{\ensuremath{\mathcal{D}\mathit{iff}^{#1}_{#2}(#3,\,#4)}}
\newcommand{\Diff}[4][]{\ensuremath{\mathrm{Diff}^{#1}_{#2}(#3,\,#4)}}
\newcommand{\coh}[3]{\ensuremath{\mathrm{H}^{#1}(#2,\,#3)}}
\newcommand{\gcoh}[2]{\ensuremath{\mathrm{H}^{#1}(#2)}}
\newcommand{\drcoh}[3]{\ensuremath{\mathcal{H}}^{#1}_{#2}(#3)}
\newcommand{\hcoh}[3]{\ensuremath{\mathbb{H}^{#1}(#2,#3)}}
\begin{document}

\title[On the relative Connections]{On the relative connections}

\author[I. Biswas]{Indranil Biswas}

\address{School of Mathematics, Tata Institute of Fundamental Research,
Homi Bhabha Road, Mumbai 400005, India}

\email{indranil@math.tifr.res.in}

\author[A. Singh]{Anoop Singh}

\address{Harish-Chandra Research Institute, HBNI, Chhatnag Road, Jhusi,
Prayagraj 211019, India}

\email{anoopsingh@hri.res.in}

\subjclass[2010]{53C05, 14F10, 14H15}

\keywords{Relative connection, relative Atiyah exact sequence, K\"ahler manifold, Atiyah-Weil criterion.}

\begin{abstract}
We investigate relative connections on a sheaf of modules. A sufficient condition is given for the 
existence of a relative holomorphic connection on a holomorphic vector bundle over a complex analytic family. We show that the 
relative Chern classes of a holomorphic vector bundle admitting relative holomorphic connection vanish, if each of the fiber of the 
complex analytic family is compact and K\"ahler.
\end{abstract}

\maketitle

\section{Introduction}\label{Intro}

In \cite{A}, Atiyah introduced the notion of holomorphic connections in 
principal bundles over a complex manifold. A theorem due to Atiyah and Weil, \cite{A},
\cite{W} which is known 
as the \emph{Atiyah-Weil criterion}, says that a holomorphic vector bundle over a compact
Riemann surface admits a holomorphic connection if and only if the degree of each
indecomposable component of the holomorphic vector bundle is zero (see \cite{BR}
for an exposition of the Atiyah-Weil criterion); this criterion generalizes to holomorphic
principal bundles over a compact Riemann surface \cite{AB}.

We can ask following two basic questions:

\begin{question}\label{qn}
Let $\pi\,:\, X \,\longrightarrow\, S$ be a surjective holomorphic proper submersion
with connected fibers, and let $\varpi\,:\, E\,\longrightarrow\, X$ be a
holomorphic vector bundle.
\begin{enumerate}
\item \label{q:1} Suppose $E$ admits a relative holomorphic connection. Do the relative Chern classes vanish?

\item \label{q:2} Is there a good criterion for the existence of a relative holomorphic connection on 
$E$?
\end{enumerate} 
\end{question}

Here we shall answer the (first) Question \ref{qn}\eqref{q:1}, when each fiber of the complex
analytic family is compact and K\"ahler (See Theorem \ref{thm:3.18}). We are unable to
answer the (second) Question \ref{qn}\eqref{q:2}; instead we will answer the following question
(see Theorem \ref{thm:3l}):

\begin{question}
Let $\pi\,:\, X \,\longrightarrow\, S$ be a surjective holomorphic proper submersion such that 
$\pi^{-1}(t) \,=\, X_t$ are connected. Let 
$\varpi\,:\, E \,\longrightarrow\, X$ be a holomorphic vector bundle. Suppose that for every
$t \,\in\, S$, there is a holomorphic connection on holomorphic vector bundle
$\varpi|_{E_t}\,:\, E_t \,\longrightarrow\, X_t$. Does $E$ admit a relative holomorphic connection?
\end{question}

To answer the above questions, we shall develop theory of the relative connections on sheaf of 
modules. Let $(\pi,\,\pi^\sharp)\,:\,\ringed{X} \,\longrightarrow\, \ringed{S}$ be a morphism of ringed spaces and 
\cat{F} be an $\struct{X}$-module. In \cite{GD}, Grothendieck described the notion of $S$-derivation 
or relative derivation from $\struct{X}$ to $\cat{F}$. Also, in \cite{K1}, Koszul did ``Differential 
Calculus" in the frame work of commutative algebra, which can be reformulated in sheaf theoretic 
manner. We recall in Section \ref{sec:S-Conn}, from \cite{GD} and \cite{K1}, some preliminaries 
regarding relative derivations, relative connections, relative connections on associated 
\struct{X}-modules, covariant derivative and connection-curvature matrices in the setup of ringed 
spaces. In the third section, we define the relative differential operator between sheaves of 
\struct{X}-modules and symbol map of a first order relative differential operator. In Section 
\ref{sec:An-th}, we recall the notion of complex analytic family, which is done in \cite{KS1} in 
great detail. We establish the symbol exact sequence (see Proposition \ref{pro:4}).

\begin{proposition}[{Symbol exact sequence}]
Let $\pi \,:\, X \,\longrightarrow\, S$ be a holomorphic proper submersion of complex manifolds
with connected fibers, and
let $\cat{F}$, $\cat{G}$ be two locally free \struct{X}-modules of rank $r$ and $p$ respectively. Then the sequence
$$
0 \to \HOM[\struct{X}]{\cat{F}}{\cat{G}} \xrightarrow{\imath}\DIFF[1]{S}{\cat{F}}{\cat{G}} \xrightarrow{\sigma_1}\DER[S]{\struct{X}}{\HOM[\struct{X}]{\cat{F}}{\cat{G}}} \to 0
$$
is an exact sequence of $\struct{X}$-modules.
\end{proposition}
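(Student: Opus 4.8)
The plan is to establish exactness at each of the three spots, the only genuinely non-formal ingredient being surjectivity of $\sigma_1$, which I would obtain from a computation in the local model of the submersion. First I would record the maps. The arrow $\imath$ is the inclusion of the sheaf of $\struct{X}$-linear morphisms $\cat{F}\to\cat{G}$ --- the relative differential operators of order $0$ --- into $\DIFF[1]{S}{\cat{F}}{\cat{G}}$, so it is $\struct{X}$-linear and injective. For a local section $D$ of $\DIFF[1]{S}{\cat{F}}{\cat{G}}$, the symbol $\sigma_1(D)$ is the map sending a local section $f$ of $\struct{X}$ to the commutator $D\circ f-f\circ D$ (with $f$ acting by multiplication); I would check that this lands in $\DER[S]{\struct{X}}{\HOM[\struct{X}]{\cat{F}}{\cat{G}}}$ by noting that $D\circ f-f\circ D$ is $\struct{X}$-linear from $\cat{F}$ to $\cat{G}$ (this is the order-$\le 1$ condition), that the Leibniz rule $\sigma_1(D)(fg)=f\,\sigma_1(D)(g)+g\,\sigma_1(D)(f)$ follows from the vanishing of the iterated commutator (again the order-$\le 1$ condition), and that $\sigma_1(D)$ is $\invimage{\pi}{\struct{S}}$-linear because $D$ is; a one-line computation gives that $\sigma_1$ is itself $\struct{X}$-linear.

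Exactness at the middle term is then immediate: $\sigma_1(D)=0$ says precisely that $D$ commutes with multiplication by every local section of $\struct{X}$, that is, that $D$ is $\struct{X}$-linear, so $\Ker{\sigma_1}=\HOM[\struct{X}]{\cat{F}}{\cat{G}}=\Img{\imath}$.

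For surjectivity of $\sigma_1$: this is a local question for sheaves (the left portion of the sequence being already exact), so it suffices to lift a given section $\theta$ of $\DER[S]{\struct{X}}{\HOM[\struct{X}]{\cat{F}}{\cat{G}}}$ near an arbitrary point of $X$. Since $\pi$ is a submersion and $\cat{F},\cat{G}$ are locally free, choose a connected open $U\subseteq X$ with holomorphic coordinates $(s_1,\dots,s_m,z_1,\dots,z_n)$ in which $\pi$ is $(s,z)\mapsto s$, together with trivializations $\cat{F}|_U\cong\struct{X}^{\oplus r}$ and $\cat{G}|_U\cong\struct{X}^{\oplus p}$. Set $\theta_j:=\theta(z_j)$, a $p\times r$ matrix of holomorphic functions on $U$, and define
$$
D\colon\struct{X}^{\oplus r}\longrightarrow\struct{X}^{\oplus p},\qquad D(\xi):=\sum_{j=1}^{n}\theta_j\cdot\frac{\partial\xi}{\partial z_j},
$$
the operators $\partial/\partial z_j$ acting entrywise. (Invariantly: compose any local relative connection on $\cat{F}$, which exists since $\cat{F}$ is locally free, with the $\struct{X}$-linear morphism $\Omega_{X/S}\otimes_{\struct{X}}\cat{F}\to\cat{G}$ corresponding to $\theta$.) Each $\partial/\partial z_j$ annihilates the functions pulled back from $S$, so $D$ is $\invimage{\pi}{\struct{S}}$-linear, and a short computation gives $D\circ f-f\circ D=\sum_j(\partial f/\partial z_j)\,\theta_j$, which is $\struct{X}$-linear; hence $D$ is a relative differential operator of order $\le 1$. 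On the other hand, by the description of relative derivations recalled in Section~\ref{sec:S-Conn} --- equivalently, because the relative cotangent sheaf is free on $dz_1,\dots,dz_n$ over $U$ and $\theta$ factors through the relative differential --- one also has $\theta(f)=\sum_j(\partial f/\partial z_j)\,\theta_j$, so $\sigma_1(D)=\theta$ on $U$. The main obstacle is precisely this last step: pinning down the local normal form of $\pi$ and matching the coordinate expression for $\theta$ with that for the symbol of the $D$ just constructed; once the local structure of $\DER[S]{\struct{X}}{-}$ is in hand, the rest is formal.
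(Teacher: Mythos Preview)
Your proof is correct and follows essentially the same route as the paper's: exactness on the left and in the middle is recorded as formal (the paper handles this earlier, in \eqref{symb}), and surjectivity of $\sigma_1$ is established by the identical local computation---choose submersion coordinates and local frames, set $\theta_j=\theta(z_j)$, define $D=\sum_j\theta_j\,\partial/\partial z_j$, and verify $\sigma_1(D)=\theta$ via $[D,f]=\sum_j(\partial f/\partial z_j)\,\theta_j=\theta(f)$. Your parenthetical invariant description (compose a local relative connection on $\cat{F}$ with the $\struct{X}$-linear map $\Omega^1_{X/S}\otimes\cat{F}\to\cat{G}$ induced by $\theta$) is a nice gloss the paper does not make explicit, but the underlying argument is the same.
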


This (symbol) exact sequence is used in defining relative Atiyah algebra. The notion of relative 
holomorphic connections on an analytic coherent sheaf was introduced by Deligne \cite{D}, which 
coincides with the definition given here if it is formulated in the holomorphic setting. Given a 
relative holomorphic connection on a holomorphic vector bundle, there is an induced family of 
holomorphic connections; more precisely we prove the following (see Proposition \ref{pro:3J}):

\begin{proposition}
Let $\pi\,:\, X \,\longrightarrow\, S$ be a surjective holomorphic proper
submersion with connected fibers, and let $E \xrightarrow{\varpi} X$ be a 
holomorphic vector bundle. Suppose that we have a holomorphic $S$-connection 
$D$ on $E$. Then for every $t \in S$, we have a holomorphic connection $D_t$
on the holomorphic vector bundle $E_t \to X_t$. In other words, we have a
family $\{D_t\,\mid\, t \,\in\, S\}$ of holomorphic connections on the holomorphic 
family $\{\varpi\,:\, E_t \,\to\, X_t\,\mid\, t\,\in\, S\}$ of vector bundles.
\end{proposition}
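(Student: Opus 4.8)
The plan is to obtain $D_t$ by restricting the relative connection $D$ to the fiber $X_t\,=\,\invimage{\pi}{t}$, and then to check that what comes out is a genuine holomorphic connection on $E_t\,\longrightarrow\, X_t$. Write $\iota_t\,:\,X_t\,\hookrightarrow\, X$ for the inclusion, so that $E_t\,=\,\pullback{\iota_t}{E}$, and let $\mathcal{I}_{X_t}$ denote the ideal sheaf of $X_t$ in $X$; locally $\mathcal{I}_{X_t}$ is generated by the functions $\pi^\ast u_1,\dots,\pi^\ast u_m$, where $u_1,\dots,u_m$ are local holomorphic coordinates on $S$ centred at $t$. The first point is that the canonical morphism $\pullback{\iota_t}{\Omega^1_{X/S}}\,\longrightarrow\,\Omega^1_{X_t}$ is an isomorphism: since $\pi$ is a submersion, around any point of $X_t$ there are local coordinates $(z_1,\dots,z_n,\,t_1,\dots,t_m)$ on $X$ in which $\pi$ is the projection $(z,t)\,\longmapsto\,(t_1,\dots,t_m)$, the fiber $X_t$ is cut out by $t_1,\dots,t_m$, and $\Omega^1_{X/S}$ is free over $\struct{X}$ on $\diff z_1,\dots,\diff z_n$, restricting to $\Omega^1_{X_t}$ free on $\diff z_1,\dots,\diff z_n$. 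Under this identification one has $\restrict{(\diff_{X/S}f)}{X_t}\,=\,\diff(\restrict{f}{X_t})$ for every local holomorphic function $f$ on $X$, and $\diff_{X/S}$ annihilates $\invimage{\pi}{\struct{S}}$; in particular $\diff_{X/S}$ sends $\mathcal{I}_{X_t}$ into $\mathcal{I}_{X_t}\cdot\Omega^1_{X/S}$.

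Using this I would define $D_t$ as follows. Recall that $D\,:\,E\,\longrightarrow\, E\otimes_{\struct{X}}\Omega^1_{X/S}$ is $\invimage{\pi}{\struct{S}}$-linear and satisfies $D(fs)\,=\,f\,D(s)+s\otimes\diff_{X/S}f$. Given a local section $\bar{s}$ of $E_t$ near a point of $X_t$, pick any local lift $s$ of $\bar{s}$ to $E$ and set $D_t(\bar{s})\,:=\,\restrict{(Ds)}{X_t}$, regarded as a section of $E_t\otimes_{\struct{X_t}}\Omega^1_{X_t}$ via the isomorphism above. The key verification is that this is independent of the chosen lift. If $s_1,s_2$ are two lifts of $\bar{s}$, then, as $E$ is locally free, $s_1-s_2$ is a section of $\mathcal{I}_{X_t}\cdot E$; writing it locally as $\sum_k h_k e_k$ with $h_k\in\mathcal{I}_{X_t}$ and $e_k$ part of a local frame, the Leibniz identity gives $D(s_1-s_2)\,=\,\sum_k h_k\,De_k+\sum_k e_k\otimes\diff_{X/S}h_k$, which lies in $\mathcal{I}_{X_t}\cdot(E\otimes\Omega^1_{X/S})$ by the last sentence of the previous paragraph. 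Hence $\restrict{D(s_1-s_2)}{X_t}\,=\,0$, so $D_t$ is well defined; it is plainly additive.

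It then remains to check the Leibniz rule for $D_t$. For a local holomorphic function $\bar{f}$ on $X_t$ and a local section $\bar{s}$ of $E_t$, lift $\bar{f}$ to $f$ on $X$ and $\bar{s}$ to $s$ in $E$; then $D(fs)\,=\,f\,Ds+s\otimes\diff_{X/S}f$, and restricting to $X_t$ while using $\restrict{(\diff_{X/S}f)}{X_t}\,=\,\diff\bar{f}$ gives $D_t(\bar{f}\bar{s})\,=\,\bar{f}\,D_t(\bar{s})+\bar{s}\otimes\diff\bar{f}$. Thus $D_t$ is a holomorphic connection on $E_t\,\longrightarrow\, X_t$, and letting $t$ vary over $S$ yields the desired family $\{D_t\,\mid\, t\,\in\, S\}$.

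The only real content is the well-definedness step, namely that $D$ carries sections vanishing on $X_t$ (to first order along the vertical directions of $\pi$) to relative forms that still vanish on $X_t$; everything else is formal. I expect no genuine obstacle beyond carefully setting up the identification $\pullback{\iota_t}{\Omega^1_{X/S}}\,\cong\,\Omega^1_{X_t}$ and the behaviour of $\diff_{X/S}$ on $\mathcal{I}_{X_t}$. Equivalently, one may organize the whole argument conceptually: restriction along $\iota_t$ carries the relative Atiyah exact sequence $0\to\HOM[\struct{X}]{E}{E}\to\mathrm{At}_S(E)\to T_{X/S}\to0$ to the Atiyah exact sequence of $E_t\,\longrightarrow\, X_t$ (this uses that $\pi$ is a submersion), hence the splitting $D$ to a splitting $D_t$, which is precisely a holomorphic connection on $E_t$; the direct computation above is what makes this base-change statement precise.
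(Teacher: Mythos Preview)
Your proof is correct. Your primary argument is a direct computation: you view the $S$-connection as a map $D\,:\,E\,\longrightarrow\,E\otimes\Omega^1_{X/S}$, define $D_t$ by lifting sections and restricting, and verify well-definedness using that $d_{X/S}$ carries the ideal sheaf $\mathcal{I}_{X_t}$ into $\mathcal{I}_{X_t}\cdot\Omega^1_{X/S}$. The paper instead takes exactly the conceptual route you sketch in your closing paragraph: it sets up restriction maps $r_t$ on $\END[\struct{X}]{E}$, on $\cat{A}t_S(E)\subset\DIFF[1]{S}{E}{E}$, and on $\cat{T}_{X/S}$, observes that these fit into a morphism from the relative Atiyah sequence to the Atiyah sequence of $E_t\to X_t$, and concludes that a holomorphic splitting $\nabla$ of the former restricts to a holomorphic splitting $\nabla_t$ of the latter. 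Your direct approach has the advantage of being explicit and self-contained (in particular, the well-definedness check you isolate is precisely the content behind the paper's assertion that the restriction maps are well defined and surjective); the paper's approach is shorter but leans on the Atiyah-algebra formalism already developed. Either way, the substance is the same base-change statement you identify at the end.
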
 

Next, we define the relative Chern classes of a complex vector bundle over
a complex analytic family, and prove the following (see Theorem \ref{thm:3.18}):

\begin{theorem}
Let $\pi\,:\, X \,\longrightarrow\, S$ be a surjective holomorphic proper
submersion, such that for each $t \in S$, $\pi^{-1}(t) = X_t$ is compact
connected K\"ahler manifold.
Let $E \,\xrightarrow{\varpi}\, X$ be a holomorphic vector 
bundle. Suppose that $E$ admits a holomorphic $S$-connection.
Then all the relative Chern classes $C_p^S(E) \,\in\, \drcoh{2p}{dR}{X/S}(S)$ 
of $E$ over $S$, are zero.
\end{theorem}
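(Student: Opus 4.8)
The plan is to mimic the classical Chern--Weil / Atiyah argument in the relative setting. First I would recall that a holomorphic $S$-connection $D$ on $E$ splits the relative Atiyah exact sequence, and hence produces a relative curvature form $\mathcal{R}(D) \,=\, D\circ D$, which is an $\struct{X}$-linear map $E \,\to\, E \otimes \Omega^2_{X/S}$, i.e.\ a global section of $\END[\struct{X}]{E}\otimes \Omega^2_{X/S}$. Because $D$ is holomorphic, $\mathcal{R}(D)$ is a holomorphic relative $2$-form with values in $\END[\struct{X}]{E}$, so for each invariant polynomial $\phi$ of degree $p$ on $\mathfrak{gl}_r$, the form $\phi(\mathcal{R}(D))$ is a \emph{holomorphic} section of $\Omega^{2p}_{X/S}$, and it is $d_{X/S}$-closed by the relative Bianchi identity. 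The relative Chern class $C_p^S(E)\,\in\,\drcoh{2p}{dR}{X/S}(S)$ is by construction (from the preceding section) represented by such a Chern--Weil form built from any relative connection, so it suffices to show that the de Rham class in the relative de Rham cohomology sheaf $\drcoh{2p}{dR}{X/S}$ (pushed to $S$) of a holomorphic, $d_{X/S}$-closed relative $2p$-form vanishes.

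The key step is therefore the following fiberwise vanishing statement: if $\omega$ is a global section of $\Omega^{2p}_{X/S}$ with $d_{X/S}\omega \,=\, 0$ and moreover $\omega$ restricts on each fiber $X_t$ to a holomorphic closed form of positive degree, then the class of $\omega$ in $\drcoh{2p}{dR}{X/S}(S)$ is zero. Here I would invoke the hypothesis that every fiber $X_t$ is compact Kähler: on a compact Kähler manifold every holomorphic form is closed and \emph{harmonic}, and a holomorphic exact form of positive degree is zero; more precisely, a $d$-closed holomorphic $q$-form with $q>0$ that is $d$-exact must vanish, since it is both harmonic (being holomorphic on a compact Kähler manifold) and orthogonal to the harmonic forms. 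The relative de Rham cohomology sheaf $\drcoh{2p}{dR}{X/S}$ has fiber at $t$ the de Rham cohomology $H^{2p}_{dR}(X_t;\C)$ (one can appeal to the relative Poincaré lemma / Ehresmann, as recorded in the analytic-family section), and the image of a holomorphic relative form in this sheaf is fiberwise the image of the holomorphic form $\restrict{\omega}{X_t}$ in $H^{2p}(X_t;\C)$. By the Hodge-theoretic fact just described, that image is zero for every $t$; since a section of the (locally constant, by Ehresmann) sheaf $\drcoh{2p}{dR}{X/S}$ that vanishes at every point of $S$ is the zero section, we conclude $C_p^S(E)\,=\,0$.

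I would organize the write-up as: (i) construct $\mathcal{R}(D)$ from the $S$-connection and note $\phi(\mathcal{R}(D))$ is a holomorphic $d_{X/S}$-closed relative $2p$-form representing $C_p^S(E)$; (ii) restrict to a fiber $X_t$ — using Proposition \ref{pro:3J}, the restriction $D_t$ is an ordinary holomorphic connection on $E_t \to X_t$, and its Chern--Weil forms represent the ordinary Chern classes $c_p(E_t)$ in $H^{2p}_{dR}(X_t;\C)$, obtained from $C_p^S(E)$ by the fiber-restriction map on relative de Rham cohomology; (iii) apply the Atiyah--Weil vanishing on a compact Kähler manifold, namely that a holomorphic vector bundle admitting a holomorphic connection has vanishing (complex) Chern classes, which is exactly the classical statement that a holomorphic closed form of positive degree representing a real cohomology class on a compact Kähler manifold must vanish; (iv) conclude by the constancy of $\drcoh{2p}{dR}{X/S}$ over $S$.

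The main obstacle I anticipate is step (iv) — precisely, controlling the relative de Rham cohomology sheaf and its relation to the fiberwise cohomology: one needs that $R^{2p}\pi_*(\Omega^\bullet_{X/S})$ (or the corresponding analytic object $\drcoh{2p}{dR}{X/S}$) is a locally free $\struct{S}$-module whose fiber at $t$ is $H^{2p}_{dR}(X_t;\C)$, and that a global section dying at every closed point is zero; this uses properness and the Kähler (hence degeneration / base-change) hypothesis, together with the relative Poincaré lemma from the analytic-family section. The Hodge-theoretic input in step (iii) is standard but must be stated carefully: on a compact Kähler manifold, a holomorphic $p$-form is harmonic, so a nonzero holomorphic $2p$-form (for $p\geq 1$) cannot be exact, whence its class in $H^{2p}_{dR}(X_t;\C)$ is zero only if the form itself is zero — but here we do not need the form to vanish, only its class, and that class is the image of the already-closed Chern--Weil form, which by Chern--Weil equals $c_p(E_t)_{\C}$, and this is zero precisely by the Atiyah obstruction vanishing for bundles with holomorphic connections. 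Assembling these pieces in the relative language, with careful bookkeeping of the identification of $C_p^S(E)$ with the Chern--Weil form, is the technical heart of the argument.
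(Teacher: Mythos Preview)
Your proposal is correct and follows essentially the same route as the paper: restrict the $S$-connection to each fiber via Proposition~\ref{pro:3J}, invoke Atiyah's theorem that a holomorphic bundle with a holomorphic connection on a compact K\"ahler manifold has vanishing Chern classes, and then use the fiber identification of the locally free sheaf $\drcoh{2p}{dR}{X/S}$ (the paper's isomorphism $\eta$ coming from proper base change, your step~(iv)) to conclude that the global section $C_p^S(E)$ vanishes. The paper's write-up is shorter because it bypasses your discussion of the holomorphicity of the relative Chern--Weil form --- that detour is unnecessary, since the fiberwise vanishing is supplied directly by Atiyah's result rather than by any separate Hodge-theoretic argument about holomorphic forms.
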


In last subsection \ref{suff}, we prove a sufficient condition for the existence of relative 
holomorphic connection. More precisely, we prove the following (see Theorem 
\ref{thm:3l}):

\begin{theorem}
Let $\pi\,:\, X \,\longrightarrow\, S$ be a surjective holomorphic proper submersion such that 
$\pi^{-1}(t) \,=\, X_t$ are connected. Let 
$\varpi\,:\, E\,\longrightarrow\, X$ be a holomorphic vector bundle. Suppose that for every
$t \in S$, there is a holomorphic connection on holomorphic vector bundle
$\varpi|_{E_t}: E_t \to X_t$, and
$$\coh{1}{S}{\pi_*(\Omega^1_{X/S}(\END[\struct{X}]{E}))} \,= \,0\, .$$
Then, $E$ admits a holomorphic $S$-connection.
\end{theorem}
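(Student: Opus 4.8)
The plan is to construct the relative holomorphic connection on $E$ by obstruction theory via the relative Atiyah exact sequence, reducing the existence to the vanishing of a single cohomology class. Recall that a holomorphic $S$-connection on $E$ is precisely a holomorphic splitting of the relative Atiyah exact sequence
$$
0 \,\longrightarrow\, \Omega^1_{X/S}(\END[\struct{X}]{E}) \,\longrightarrow\, \mathcal{A}t_S(E) \,\longrightarrow\, \struct{X}\otimes T_{X/S} \,\longrightarrow\, 0\,,
$$
or equivalently an element of $\Hom[\struct{X}]{T_{X/S}}{\mathcal{A}t_S(E)}$ lifting the identity. The obstruction to the existence of such a global splitting is the relative Atiyah class $\at[S]{E} \,\in\, \coh{1}{X}{\Omega^1_{X/S}(\END[\struct{X}]{E})}$: the bundle $E$ admits a holomorphic $S$-connection if and only if this class vanishes. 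So the first step is to set up this identification carefully, using the symbol exact sequence (Proposition \ref{pro:4}) and the relative Atiyah algebra already constructed in the paper, and to record that $\at[S]{E} = 0$ iff a relative holomorphic connection exists.

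The second step is to bring in the Leray spectral sequence for $\pi$ applied to the sheaf $\mathcal{G} \,:=\, \Omega^1_{X/S}(\END[\struct{X}]{E})$, namely
$$
E_2^{p,q} \,=\, \coh{p}{S}{R^q\pi_*\mathcal{G}} \,\Longrightarrow\, \coh{p+q}{X}{\mathcal{G}}\,.
$$
The low-degree exact sequence gives
$$
0 \,\longrightarrow\, \coh{1}{S}{\pi_*\mathcal{G}} \,\longrightarrow\, \coh{1}{X}{\mathcal{G}} \,\longrightarrow\, \coh{0}{S}{R^1\pi_*\mathcal{G}} \,\longrightarrow\, \coh{2}{S}{\pi_*\mathcal{G}}\,.
$$
By hypothesis the first term $\coh{1}{S}{\pi_*\mathcal{G}}$ vanishes, so the edge map $\coh{1}{X}{\mathcal{G}} \,\longrightarrow\, \coh{0}{S}{R^1\pi_*\mathcal{G}}$ is injective. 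Hence to prove $\at[S]{E} = 0$ it suffices to show that its image in $\coh{0}{S}{R^1\pi_*\mathcal{G}}$ is zero, i.e. that the corresponding section of $R^1\pi_*\mathcal{G}$ vanishes.

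The third step identifies that section fiberwise. Since $\pi$ is a proper submersion, base change for the sheaf $\mathcal{G}$ (which restricts on each fiber $X_t$ to $\Omega^1_{X_t}(\END{E_t})$) identifies the stalk/fiber of $R^1\pi_*\mathcal{G}$ at $t$ with a space receiving $\coh{1}{X_t}{\Omega^1_{X_t}(\END{E_t})}$, and under this identification the relative Atiyah class $\at[S]{E}$ restricts to the ordinary Atiyah class $\at{E_t}$ of the holomorphic vector bundle $E_t \to X_t$. By hypothesis each $E_t$ admits a holomorphic connection, so $\at{E_t} = 0$ for every $t \in S$. Therefore the section of $R^1\pi_*\mathcal{G}$ determined by $\at[S]{E}$ vanishes at every point of $S$; combined with the injectivity from Step 2, this forces $\at[S]{E} = 0$, and the splitting — that is, the holomorphic $S$-connection — exists.

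The main obstacle I expect is the third step: making the compatibility between the relative Atiyah class and the fiberwise Atiyah classes precise requires a clean base-change statement for $R^1\pi_*\mathcal{G}$, and in the analytic category one must be careful — either invoke Grauert's coherence and base-change theorems, or argue locally over $S$ using a Čech/Dolbeault model and the explicit cocycle description of the Atiyah class built from transition functions of $E$, checking that the relative transition cocycle $\{g_{\alpha\beta}^{-1}\partial_{X/S} g_{\alpha\beta}\}$ restricts on each fiber to the classical Atiyah cocycle. The vanishing $\at{E_t}=0$ at the level of cohomology must be upgraded to the statement that the induced \emph{section} of the sheaf $R^1\pi_*\mathcal{G}$ is identically zero, which is where properness and the coherence of $R^1\pi_*\mathcal{G}$ are essential; the rest of the argument is formal homological algebra with the Leray spectral sequence.
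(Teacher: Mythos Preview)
Your proposal is correct and follows essentially the same route as the paper: both reduce to showing $\at[S]{E}=0$ via the five-term Leray exact sequence for $\mathcal{G}=\Omega^1_{X/S}(\END[\struct{X}]{E})$, using the fiberwise vanishing $\at{E_t}=0$ to kill the image in $\coh{0}{S}{R^1\pi_*\mathcal{G}}$ and the hypothesis $\coh{1}{S}{\pi_*\mathcal{G}}=0$ to finish. The base-change compatibility you flag as the delicate point is simply asserted without justification in the paper's proof.
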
.

\section{S-Connections}\label{sec:S-Conn}

In this section, we shall introduce the notion of $S$-derivation following \cite{GD} and 
relative connection (or $S$-connection). Throughout this section we shall assume that 
\ringed{X}, \ringed{S} are two ringed spaces, while $(\pi,\,\pi^\sharp)\,:\,\ringed{X}
\,\longrightarrow\, \ringed{S}$ is a morphism between them.

\subsection{S-Linear morphism of sheaves}\label{sec:S-Lin-mor-sheaves:}

Let \cat{F}, \cat{G} be two $\struct{X}$-modules. A morphism
$\alpha : \cat{F} \to \cat{G}$ of sheaves of abelian groups is said to be
$S$-linear if for every open subset $V \subset S$, for every open subset
$U \,\subset\, \pi^{-1}(V)$, for every $t \,\in\, \cat{F}(U)$ and for every
$s \,\in \,\cat{O}_S(V)$, we have 
$$
\alpha(\rho_{U,\pi^{-1}(V)}(\pi^\sharp_V(s))t)\, =\, \rho_{U,\pi^{-1}(V)}
(\pi^\sharp_V(s)) \alpha(t)\, ,
$$
where $\rho_{U,\pi^{-1}(V)}\,:\, \mathcal{O}_X(f^{-1}(V)) \,\longrightarrow\, \mathcal{O}_X(U)$
is the restriction map.
We denote by $\HOM[S]{\cat{F}}{\cat{G}}$ the sheaf of $S$-linear morphism from $\cat{F}$ to $\cat{G}$. 
We denote $\rho_{U,\pi^{-1}(V)}(\pi^\sharp_V(s))$ by $s\vert _U$.

\subsection{S-derivation}\label{sec:S-der}

For the following definition see \cite{GD} (Chapitre IV, \textbf{16.5}).

Let $\cat{F}$ be an $\cat{O}_X$-module. An $S$-derivation from
$\cat{O}_X$ to \cat{F} is a morphism
$$
\delta: \cat{O}_X \to \cat{F}
$$
of sheaves of abelian groups which satisfies the following conditions:

\begin{enumerate}
 \item $\delta$ is an $S$-linear morphism.
 \item (Leibniz rule.) For every open subset $U \subset X$, and for
 every $a,b \in \cat{O}_X(U)$, we have 
$$
\delta_U(ab) \,=\, a \delta_U(b)+ \delta_U(a)b\, .
$$
\end{enumerate}
The set of all $S$-derivation from $ \cat{O}_X$ to $ \cat{F} $ form a left
$ \cat{O}_X(X)$-module denoted by $$\Der[S]{ \cat{O}_X}{\cat{F}}\, .$$ For every
open
subset $U \,\subset\, X$, we note that $\Der[S]{ \cat{O}_X \vert _U}{\cat{F} \vert _U}$ is a
left
$\cat{O}_X(U)$-module. For every open subset $U \,\subset\, X$, the assignment 
$U\,\longmapsto\, \Der[S]{ \cat{O}_X \vert _U}{\cat{F} \vert _U}$ is a sheaf of
$\struct{X}$-module
and it is denoted by $\DER[S]{\cat{O}_X}{\cat{F}}$.

Let $\END[S]{\cat{F}}$ denote the sheaf of $S$-linear endomorphism on
$\cat{F}$.
Then $\END[S]{\cat{F}}$ is an $\struct{X}$-module. In particular, if we take
$\cat{F} \,= \,\struct{X}$, then $\END[S]{\cat{O}_X}$ is a sheaf of Lie algebras
with respect to the bracket operation defined as follows:
$$[\xi,\eta] \,=\, \xi \circ \eta - \eta \circ \xi$$ for every open subset
$U\, \subset \, X$ and for all $\xi,\,\eta \,\in \,\END[S]{\cat{O}_X}(U)$.
We note that $\DER[S]{\struct{X}}{\struct{X}}$ is a Lie subalgebra of
$\END[S]{\struct{X}}$.

\begin{theorem}\label{thm:1}
Let $(\pi,\,\pi^\sharp)\,:\,\ringed{X} \,\longrightarrow\,\ringed{S} $ be a morphism of ringed
spaces. Then there exists a unique \struct{X}--module, denoted by
$\Omega^1_{X/S}$, and endowed with an (universal) $S$-derivation 
$$
d_{X/S}\,:\, \struct{X} \,\longrightarrow\, \Omega^1_{X/S} 
$$ 
satisfying following universal property:
For any $\struct{X}$-module $\cat{F}$, and for any $S$-derivation
$$
\delta\,:\, \struct{X} \,\longrightarrow\, \cat{F}\, ,
$$
there exists a unique $\struct{X}$-module homomorphism 
$$
\alpha\,:\, \Omega^1_{X/S} \,\longrightarrow\, \cat{F}
$$
such that 
$$
\delta \,=\, \alpha \circ d_{X/S}\, . 
$$
\end{theorem}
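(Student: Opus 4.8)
The plan is to establish uniqueness by the standard universal-property argument and then construct $\Omega^1_{X/S}$ explicitly by generators and relations followed by sheafification, thereby reformulating the commutative-algebra construction of K\"ahler differentials of \cite{K1} in the sheaf-theoretic language of \cite{GD}.

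First I would dispose of uniqueness, which costs nothing. If $(\Omega,\,d)$ and $(\Omega',\,d')$ are two \struct{X}-modules each equipped with a universal $S$-derivation, then applying the universal property of $\Omega$ to the $S$-derivation $d'$ produces a unique \struct{X}-module homomorphism $\alpha\,:\,\Omega \to \Omega'$ with $d' = \alpha\circ d$, and symmetrically a unique $\beta\,:\,\Omega' \to \Omega$ with $d = \beta\circ d'$. Then $\beta\circ\alpha$ and $\id{\Omega}$ both factor $d$ through itself, so the uniqueness clause in the universal property forces $\beta\circ\alpha = \id{\Omega}$, and likewise $\alpha\circ\beta = \id{\Omega'}$. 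Hence $\Omega$ and $\Omega'$ are canonically isomorphic, compatibly with their derivations, so the object is unique up to unique isomorphism.

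For existence I would first record the elementary reduction that, for an additive map obeying the Leibniz rule, $S$-linearity is equivalent to vanishing on pullbacks: expanding $\delta_U(s\vert_U\cdot a) = s\vert_U\,\delta_U(a) + a\,\delta_U(s\vert_U)$ shows that $\delta$ is $S$-linear precisely when $\delta_U(s\vert_U) = 0$ for every $s$ pulled back from \struct{S} via $\pi^\sharp$ (in the notation $s\vert_U = \rho_{U,\pi^{-1}(V)}(\pi^\sharp_V(s))$ of the paper). This isolates the relevant relations. For each open $U \subseteq X$ I would take the free $\struct{X}(U)$-module on symbols $[a]$, $a \in \struct{X}(U)$, and quotient by the \struct{X}(U)-submodule generated by the additivity elements $[a+b]-[a]-[b]$, the Leibniz elements $[ab]-a[b]-b[a]$, and the elements $[s\vert_U]$ for $s$ pulled back from \struct{S}; restriction is induced by $[a]\mapsto[a\vert_{U'}]$. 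This yields a presheaf of \struct{X}-modules $\Omega^{\mathrm{pre}}$ together with a presheaf morphism $a\mapsto[a]$ which, by construction, is additive, Leibniz, and $S$-linear. I would then define $\Omega^1_{X/S}$ to be the sheafification of $\Omega^{\mathrm{pre}}$ and let $d_{X/S}\,:\,\struct{X}\to\Omega^1_{X/S}$ be the composite of $a\mapsto[a]$ with the canonical map to the associated sheaf; since the defining identities are preserved stalkwise, $d_{X/S}$ is an $S$-derivation.

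Finally I would verify the universal property. Given any \struct{X}-module \cat{F} and any $S$-derivation $\delta\,:\,\struct{X}\to\cat{F}$, the assignment $[a]\mapsto\delta_U(a)$ extends $\struct{X}(U)$-linearly over the free module and, precisely because $\delta$ is additive, Leibniz and $S$-linear, annihilates all three families of relations; it thus descends to a morphism $\Omega^{\mathrm{pre}}\to\cat{F}$ of presheaves, compatible with restriction since $\delta$ is a morphism of sheaves. Because \cat{F} is already a sheaf, the adjunction defining sheafification yields a unique \struct{X}-module homomorphism $\alpha\,:\,\Omega^1_{X/S}\to\cat{F}$ with $\delta = \alpha\circ d_{X/S}$. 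Uniqueness of $\alpha$ holds because the classes $[a]$ generate $\Omega^{\mathrm{pre}}(U)$ over $\struct{X}(U)$, so the image of $d_{X/S}$ generates $\Omega^1_{X/S}$ locally as an \struct{X}-module, whence any two homomorphisms agreeing after composition with $d_{X/S}$ coincide on a local generating set. The step demanding the most care is the passage through sheafification: one must check that local generation by the image of $d_{X/S}$ survives (so that uniqueness of $\alpha$ is genuine) and that factoring the presheaf morphism $\Omega^{\mathrm{pre}}\to\cat{F}$ through $\Omega^1_{X/S}$ is legitimate, which is exactly the adjunction between sheafification and the inclusion of sheaves into presheaves, valid here because the target \cat{F} is a sheaf.
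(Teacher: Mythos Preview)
Your argument is correct: the uniqueness via the standard universal-object argument is fine, the reduction of $S$-linearity (for an additive Leibniz map) to vanishing on pulled-back sections is valid, the generators-and-relations presheaf followed by sheafification is the classical construction, and your verification of the universal property, including the appeal to the sheafification adjunction and local generation by the image of $d_{X/S}$, goes through. The paper does not actually supply its own proof of this statement; it simply cites \cite{GD}, Proposition~16.5.3. What you have written is essentially a sheaf-theoretic unfolding of that reference (EGA~IV also offers the equivalent $I/I^2$ description via the diagonal, but the free-module-modulo-relations route you chose is equally standard and arguably more transparent for verifying the universal property directly). In short, you have provided strictly more detail than the paper does, along one of the canonical lines the citation points to.
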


\begin{proof}
Cf. \cite{GD} (Proposition:16.5.3 on p.~28).
\end{proof}

\subsection{S-Connections}\label{Sec:S-con}

Let $ \mathcal{F}$ be an \struct{X}-module. An $S$-connection or \textbf{relative connection} on
$\mathcal{F}$ is an $ \struct{X}$--module homomorphism 
$$
 D \,: \,\DER[S]{\struct{X}}{\struct{X}} \,\longrightarrow\, \END[S]{\cat{F}}
$$
such that for every open subset $U$ of $X$ and for every
$\xi \,\in\, \DER[S]{\struct{X}}{\struct{X}}(U)$, the $\struct{X}(U)$--module
homomorphism 
$$D_U\, :\, \DER[S]{\struct{X}}{\struct{X}}(U) \,\longrightarrow\, \END[S]{\cat{F}}(U)\, ,
\ \ \xi \, \longmapsto \, (D_U)_\xi$$ satisfies the Leibniz rule which says that
$$
((D_U)_\xi)_V(ag) \,=\, \xi|_V(a)g + a ( (D_U)_\xi)_V(g)
$$
for every open subset $V$ of $U$, for all $a \,\in \,\struct{X}(V)$ and
$g \,\in \,\cat{F}(V)$.

If $\pi\,:\, X \,\longrightarrow\, S$ is a holomorphic map of complex 
manifolds with connected fibers, and $\cat{F}$ is a holomorphic vector bundle over $X$, we call
$D$ a \emph{holomorphic $S$-connection}. Similarly, If 
$\pi\,:\, X \,\longrightarrow\, S$ is a smooth map of smooth manifolds and $\cat{F}$ is a 
smooth vector bundle, we call $D$ a \emph{smooth $S$-connection}.

\begin{remark}\label{Rem:2}\mbox{}
\begin{enumerate}
\item
We note that $(D_U)_\xi\,:\, \cat{F}|_U \,\longrightarrow\, \cat{F}|_U$ is an $S$-linear endomorphism, where
$S$-linearity is with respect to $\pi|_U\,:\, U \,\longrightarrow\, S$.

\item The inclusion map
$$
\epsilon \,:\, \DER[S]{\struct{X}}{\struct{X}} \,\hookrightarrow\,
\END[S]{\struct{X}} 
$$
is an $S$-connection on the \struct{X}-module \struct{X}, and it is called
the \emph{canonical $S$-connection} on \struct{X}. To avoid the cumbersome
notation $(D_U)_\xi$, we shall simply denote it by $D_\xi$.
\end{enumerate} 
\end{remark}

\subsection{S-Connections on the associated \struct{X}-modules}\label{Sec:1.d}

Let $(\cat{F}_i)_{i \in I}$ be a family of $ \struct{X}$-modules, and for
each $i \in I$, let $D^i$ be an $S$-connection on $\cat{F}_i$. Then the various
\struct{X}-modules obtained from $(\cat{F}_i)_{i \in I}$ by functorial
construction has natural $S$-connections.

\subsubsection{Direct Sum}\label{Sec:D-Sum}

If
$$
\cat{F}\,=\, \bigoplus_{i\in I} \cat{F}_i\, ,
$$
and if we define 
$$
D_\xi(u) \,=\, (D_\xi^i(u_i))_{i\in I}
$$
for all sections $\xi$ of $\DER[S]{\struct{X}}{\struct{X}}$ and all
$u \,= \, (u_i)_{i \in I}$ of $ \cat{F}$, then we get an $S$-connection $D$ on
$\cat{F}$. In particular, if we take each $\cat{F}_i $ to be \struct{X} and
each $D^i$ to be $\epsilon$, the canonical $S$-connection on \struct{X},
then every free \struct{X}-module has a canonical $S$-connection.

\subsubsection{Tensor products}\label{Sec:T-prod}

Suppose $I \,=\, \{1,2,\cdots, p\}$, where $p$ is an integer $\geq 1$. Then for every open subset
$U$ of $X$, and for each $\xi \,\in\, \DER[S]{\struct{X}}{\struct{X}}(U)$, there
exists a unique $S$-linear endomorphism $D_\xi$ of
$\cat{F}_1\bigotimes_{\struct{X}} \cdots \bigotimes_{\struct{X}} \cat{F}_p$
such that on the presheaf level it is given by the formula
$$
D_\xi(s_1\otimes_{\struct{X}} \cdots \otimes_{\struct{X}}s_p) \,=\,
\sum^p_{i=1}s_1\otimes_{\struct{X}} \cdots \otimes_{\struct{X}}s_{i-1}
\otimes_{\struct{X}}D^i_\xi(s_i)\otimes_{\struct{X}}s_{i+1} 
\otimes_{\struct{X}} \cdots \otimes_{\struct{X}}s_p
$$
for every $s_1\otimes_{\struct{X}} \cdots \otimes_{\struct{X}}s_p \,\in\,
\cat{F}_1(U) \bigotimes_{\struct{X}(U)} \cdots \bigotimes_{\struct{X}(U)}
\cat{F}_p(U)$. This gives an $S$-connection on
$\cat{F}_1\bigotimes_{\struct{X}} \cdots \bigotimes_{\struct{X}}\cat{F}_p$.

Suppose that $\cat{F}_1 \,=\, \cat{F}_2 \,=\, \cdots \,= \,\cat{F}_p \,= \,\cat{F}$, and
denote
$\cat{F}_1\bigotimes_{\struct{X}} \cdots \bigotimes_{\struct{X}}\cat{F}_p$
by $T^p_{\struct{X}}(\cat{F})$. Equip $T^0_{\struct{X}}(\cat{F}) \,=\, \struct{X}$ with
the canonical $S$-connection $\epsilon$ on \struct{X}, and for each 
$p \geq 1$, equip $T^p_{\struct{X}}(\cat{F})$ with the $S$-connection
induced by the $S$-connection $D$ on $\cat{F}$; this
$S$-connection on $T^p_{\struct{X}}(\cat{F})$ will be denoted by $D^p$. Recall that the
tensor algebra of
$\struct{X}$-module \cat{F} is a graded \struct{X}-algebra
$$
T_{\struct{X}}(\cat{F}) \,=\, \bigoplus_{p \in \N}T^p_{\struct{X}}(\cat{F})\, .
$$
Let $D$ be the connection on the \struct{X}-module 
$T_{\struct{X}}(\cat{F})$,
which is the direct sum (see Section \ref{Sec:D-Sum}) of the connections. It is called the induced
connection on $T_{\struct{X}}(\cat{F})$.

\begin{remark}
\label{Rem:3}
On tensor algebra $T_{\struct{X}}(\cat{F})$, we have
$$
D_\xi(s\otimes t) = D_\xi(s)\otimes t + s\otimes D_\xi(t),
$$
for all local sections $\xi$ of \DER[S]{\struct{X}}{\struct{X}} and local
sections $s,t$ of $T_{\struct{X}}(\cat{F})$.
\end{remark}

\subsubsection{Submodule and quotient module}\label{Sec:S-mod-Q-mod}

Let \cat{F} be an \struct{X}-module with an $S$-connection $D$, and let $\cat{G}$
be
an \struct{X}-submodule of \cat{F}. Let \cat{H} denote the quotient
\struct{X}-module \cat{F}/\cat{G}. Suppose that for every section $\xi$ of
\DER[S]{\struct{X}}{\struct{X}}, we have 
$D_\xi(\cat{G}) \subset \cat{G}$. Then
$D$ will induce an $S$-connection on $\cat{G}$ and on $\cat{H}$.

\subsubsection{Symmetric algebra and exterior Algebra}\label{Sym-Ext-Alg}

Let $D$ be an $S$-connection on \cat{F}. The
$S$-connection on the tensor algebra $T_{\struct{X}}(\cat{F})$ induced by
$D$ will also be denoted by $D$. Let $\cat{I}$
denote the two sided ideal sheaf of $T_{\struct{X}}(\cat{F})$ described as
follows:

for every open subset $U$ of $X$, let $\cat{I}(U)$ be the two sided ideal in
$T_{\struct{X}}(\cat{F})(U)$ generated by elements of the form 
$s \otimes t - t \otimes s$, where $s,\,t \,\in\, \cat{F}(U)$. 
Then $D_\xi(\cat{I}) \,\subset\, \cat{I}$ for all sections $\xi$ of
$\DER[S]{\struct{X}}{\struct{X}}$. Thus, by above Section 
\ref{Sec:S-mod-Q-mod}, we get an $S$-connection $D$ on the symmetric algebra
$$
Sym_{\struct{X}}(\cat{F})\, =\, T_{\struct{X}}(\cat{F})/\cat{I}
$$
of $\cat{F}$.

Similarly, let \cat{J} denote the two sided ideal sheaf of 
$T_{\struct{X}}(\cat{F})$ generated by the local sections $s\otimes s$ of
$T^2_{\struct{X}}(\cat{F})$, where $s$ is a local section of \cat{F}. Then 
we have $D_\xi(\cat{J}) \,\subset \,\cat{J}$ for all local sections $\xi$ of 
\DER[S]{\struct{X}}{\struct{X}}, and hence a connection on the exterior
algebra 
$$
\Lambda_{\struct{X}}(\cat{F}) \,=\, T_{\struct{X}}(\cat{F})/\cat{J}
$$
of \cat{F} is obtained.

\begin{remark}\label{Rem:4}\mbox{}
\begin{enumerate}
\item For all $p\,\in\, \N$, we have 
$D_\xi(Sym^p_{\struct{X}}(\cat{F})) \,\subset\, Sym^p_{\struct{X}}(\cat{F})$,
where $Sym^p_{\struct{X}}(\cat{F})$ is the $p$-th graded component of the symmetric
algebra $Sym_{\struct{X}}(\cat{F})$. Consequently, we get an $S$-connection on 
$Sym^p_{\struct{X}}(\cat{F})$. Similarly, we get an $S$-connection on 
$\Lambda^p_{\struct{X}}(\cat{F})$.
 
\item We have
$$
D_\xi(ss') \,=\, D_\xi(s)s' + sD_\xi(s')
$$
for all local sections $\xi$ of $\DER[S]{\struct{X}}{\struct{X}}$ and 
$s,s'$ of $Sym_{\struct{X}}(\cat{F})$, and
$$
D_\xi(t \wedge t') \,=\, D_\xi(t) \wedge t' + t \wedge D_\xi(t')
$$
for all local sections $\xi$ of $\DER[S]{\struct{X}}{\struct{X}}$ and 
$t,t'$ of $\Lambda_{\struct{X}}(\cat{F})$.
\end{enumerate}
\end{remark}

\subsubsection{S-Connection on \HOM[\struct{X}]{\cat{F}}{\cat{G}}}\label{Hom(F,G)}

Let \cat{F}, \cat{G} be \struct{X}-modules with $S$-connections 
$D^{\cat{F}}$
and $D^{\cat{G}}$ respectively. For every local section $\xi$ of
$\DER[S]{\struct{X}}{\struct{X}}$, let $D_\xi$ be the $S$-linear endomorphism
of
the \struct{X}-module \HOM[\struct{X}]{\cat{F}}{\cat{G}},which is defined by
$$
D_\xi(h) = D^{\cat{G}}_\xi \circ h - h \circ D^{\cat{F}}_\xi,
$$
for all local sections $h$ of \HOM[\struct{X}]{\cat{F}}{\cat{G}}. Then the
morphism
$$D = \xi \mapsto D_\xi : \DER[S]{\struct{X}}{\struct{X}} \to
\END[S]{\HOM[\struct{X}]{\cat{F}}{\cat{G}}}$$
is an $S$-connection on $\HOM[\struct{X}]{\cat{F}}{\cat{G}}$.

\begin{remark}
\label{Rem:5}\mbox{}
\begin{enumerate}
\item If $\cat{F}\,=\, \cat{G}$, and $ D^\cat{F}\,=\, D^\cat{G}$, then the above $S$
-connection $D$ on \END[S]{\cat{F}} is given by 
$$
D_\xi(h) \,=\, [D^{\cat{F}}_\xi,\, h] \,=\, D^{\cat{G}}_\xi \circ h - h \circ
D^{\cat{G}}_\xi
$$
for all local sections $h$ of $\END[S]{\cat{F}}$.

\item If $\cat{G} \,=\, \struct{X}$, and if $D^{\cat{G}}$ is the
canonical connection on $\struct{X}$, then the above connection on 
$\cat{F}^* \,=\, \HOM[\struct{X}]{\cat{F}}{\struct{X}}$ is given by
$$
D_\xi(f) \,=\, \xi \circ f - f \circ D^{\cat{F}}_\xi
$$
for all local sections $f$ of $\cat{F}^*$.
\end{enumerate}
\end{remark}

\subsubsection{S-Connection on \struct{X}-module of \struct{X}-multilinear maps}
\label{S-Con-Mult}

Let $p \,\geq \,1$ be an integer, and let 
$\cat{F}_1,\,\cat{F}_2,\, \cdots,\, \cat{F}_p,\, \cat{G}$ be \struct{X}-modules with
$S$-connections $D^1,\,D^2,\,\cdots,\,D^p,\,D^{\cat{G}}$ respectively. For every
open subset $U$ of $X$, define
$$
\cat{L}_{\struct{X}}(\cat{F}_1,\cdots,\cat{F}_p;\cat{G})(U) \,:=\,
L_{\struct{X}|_U}(\cat{F}_1|_U,\cdots,\cat{F}_p|_U;\cat{G}|_U)\, ,
$$
where $ L_{\struct{X}|_U}(\cat{F}_1|_U,\cdots,\cat{F}_p|_U;\cat{G}|_U)$ is
the $\struct{X}(U)$-module of $\struct{X}|_U$-multilinear maps from 
$\cat{F}_1|_U \times \cat{F}_2|_U \times \cdots\times \cat{F}_p|_U$ to $\cat{G}|_U$. 
The sheaf of \struct{X}-multilinear maps is denoted by $\cat{L}_{\struct{X}}(\cat{F}_1,\cdots,\cat{F}_p;\cat{G}$).
For every local
section $\xi$ of \DER[S]{\struct{X}}{\struct{X}}, let $D_\xi$ be the $S$-linear
endomorphism of the \struct{X}-module 
$\cat{L}_{\struct{X}}(\cat{F}_1,\cdots ,\cat{F}_p;\cat{G})$ defined by
$$
D_\xi(\omega)(u_1,u_2,\ldots,u_p) \,=\, 
D^\cat{G}_\xi(\omega(u_1,u_2,\ldots,u_p)) 
- \sum^p_{i = 1} \omega(u_1,\ldots,u_{i-1},D^i_\xi(u_i),u_{i+1},
\ldots, u_p)\, ,
$$
for all local sections $\omega $ of
$\cat{L}_{\struct{X}}(\cat{F}_1,\cdots,\cat{F}_p;\cat{G})$ and local
sections $(u_1,\,u_2,\,\cdots,\,u_p)$ of 
$\cat{F}_1 \times \cat{F}_2 \times \cdots\times \cat{F}_p$. Then the
morphism
$$D \,= \,\xi \,\longmapsto\, D_\xi \,:\, \DER[S]{\struct{X}}{\struct{X}}
\,\longrightarrow\, \END[S]{\cat{L}
_{\struct{X}}(\cat{F}_1,\cdots,\cat{F}_p;\cat{G}})$$ is an $S$-connection on 
$\cat{L}_{\struct{X}}(\cat{F}_1,\cdots,\cat{F}_p;\cat{G})$.
 
\begin{remark}\label{Rem:6} 
Let $\cat{L}^p_{\struct{X}}(\cat{F},\cat{G})$ denote the
$\struct{X}$-module 
$\cat{L}_{\struct{X}}(\cat{F}_1,\cdots,\cat{F}_p;\cat{G})$,
 where $\cat{F}_1 \,=\, \cdots\,=\, \cat{F}_p \,=\, \cat{F}$. Let $D$ be the
$S$-connection on $\cat{L}^p_{\struct{X}}(\cat{F},\cat{G})$ induced by
$D^{\cat{F}}$ and $D^{\cat{G}}$.
Let $\cat{S}ym^p_{\struct{X}}(\cat{F},\cat{G})$ (respectively,
$\cat{A}lt^p_{\struct{X}}(\cat{F},\cat{G})$) denote the $\struct{X}$-submodule
of $\cat{L}^p_{\struct{X}}(\cat{F},\,\cat{G})$ consisting of symmetric
(respectively, alternating) \struct{X}-multilinear maps from $\cat{F}^p$ to 
 $\cat{G}$. 
 Then $$D_\xi(\cat{S}ym^p_{\struct{X}}(\cat{F},\cat{G})) \,\subset\, \cat{S}
 ym^p_{\struct{X}}(\cat{F},\cat{G})$$
 (respectively, $D_\xi(\cat{A}lt^p_{\struct{X}}(\cat{F},\cat{G})) \,\subset\,
 \cat{A}lt^p_{\struct{X}}(\cat{F},\cat{G})$). Therefore, 
 $D$ induces an $S$-connection on the \struct{X}-submodules 
 $$\cat{S}ym^p_{\struct{X}}(\cat{F},\cat{G})\ \ \text{ and }\ \
\cat{A}lt^p_{\struct{X}}(\cat{F},\cat{G})$$ of 
 $\cat{L}^p_{\struct{X}}(\cat{F},\cat{G})$.
 \end{remark}

\subsubsection{Compatibility of multilinear maps and $S$-connections}\label{comp-mult-conn}

Let $p \,\geq\, 1$, and let $$\cat{F}_1,\,\cdots,\,\cat{F}_p,\,\cat{G}$$ be
$\struct{X}$-modules with $S$-connections $ D^1,\,\cdots,\,D^p,\,D^{\cat{G}}$ respectively. 
Let
$$
 \mu \,:\, \cat{F}_1 \times \cat{F}_2 \times \cdots\times \cat{F}_p \,\longrightarrow\, \cat{G}
$$
be an \struct{X}-multilinear map. We say that 
$D^1,\,D^2,\,\cdots,\,D^p,\,D^{\cat{G}},\, \mu$ are compatible if for every local
section $\xi$ of $\DER[S]{\struct{X}}{\struct{X}}$, and for all local sections
$(u_1,\,u_2,\,\cdots,\,u_p)$ of $\cat{F}_1 \times \cat{F}_2 \times \cdots\times
\cat{F}_p$, we have
$$
 D^{\cat{G}}_\xi(\mu(u_1,\cdots,u_p)) = \sum^p_{i = 1} \mu(u_1,
 \cdots,u_{i-1},D^i_\xi(u_i),u_{i+1},\ldots,u_p)\, .
$$

The following proposition is straight-forward to prove.

\begin{proposition}\label{pr1}
Let $\cat{F}$, $\cat{G}$ and $\cat{H}$ be \struct{X}-modules, and let
$$ \mu\,:\, \cat{F} \times
 \cat{G} \,\longrightarrow\, \cat{H}$$ be a $\struct{X}$-bilinear map. Let $\cat{K}$ be any
$\struct{X}$-module and $p \,\geq\, 1$, $q \,\geq\, 1$. Then, we have a $\struct{X}$-bilinear map
$$
\wedge\,:\,\cat{A}lt^p_{\struct{X}}(\cat{K},\cat{F}) \times 
\cat{A}lt^q_{\struct{X}}(\cat{K},\cat{G})\,\longrightarrow\,
\cat{A}lt^{p+q}_{\struct{X}}(\cat{K},\cat{H})
$$
defined by
$$
\alpha \wedge \beta(u_1,\ldots,u_{p+q}) \,=\, 
\sum_{\sigma \in S(p,q)}sgn(\sigma)
\mu( \alpha(u_{\sigma(1)},\ldots,u_{\sigma(p)}), \beta(u_{\sigma(p+1)}
\ldots,u_{\sigma(p+q)}))
$$
for all local sections $\alpha$ of 
$\cat{A}lt^p_{\struct{X}}(\cat{K},\cat{F})$, and $\beta$ of 
$\cat{A}lt^q_{\struct{X}}(\cat{K},\cat{G})$, where $S(p,q)$ is the set of all 
$(p,q)$-shuffles, that is, the set of all permutation $\sigma \in S_{p+q}$ 
such that $\sigma(1)\,<\, \cdots\,<\, \sigma(p)$ and 
$\sigma(p+1) \,<\, \cdots\,<\, \sigma(p+q)$. 
\end{proposition}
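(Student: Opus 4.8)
The plan is to construct the claimed wedge product explicitly and then verify, in turn, that it is well-defined (lands in alternating multilinear maps), that it is $\struct{X}$-bilinear, and finally to remark on the sheaf-theoretic bookkeeping. All of this is carried out on each open set $U \subset X$ and shown to be compatible with restriction, so that the morphisms glue to a morphism of sheaves; since the construction is manifestly functorial in $U$, the presheaf-level argument suffices.

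First I would fix an open subset $U \subset X$ and work with $\struct{X}(U)$-modules $\cat{K}(U)$, $\cat{F}(U)$, $\cat{G}(U)$, $\cat{H}(U)$ and the $\struct{X}(U)$-bilinear map $\mu_U$. Given $\alpha \in \cat{A}lt^p_{\struct{X}}(\cat{K},\cat{F})(U)$ and $\beta \in \cat{A}lt^q_{\struct{X}}(\cat{K},\cat{G})(U)$, the formula
$$
(\alpha \wedge \beta)(u_1,\ldots,u_{p+q}) \,=\, \sum_{\sigma \in S(p,q)} \sgn{\sigma}\, \mu_U\bigl(\alpha(u_{\sigma(1)},\ldots,u_{\sigma(p)}),\, \beta(u_{\sigma(p+1)},\ldots,u_{\sigma(p+q)})\bigr)
$$
defines a map $\cat{K}(U)^{p+q} \to \cat{H}(U)$. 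Its $\struct{X}(U)$-multilinearity in each of the $p+q$ arguments is immediate from the multilinearity of $\alpha$, $\beta$ and the bilinearity of $\mu_U$, since each index appears in exactly one of the two slots for every shuffle $\sigma$. The substantive point is that $\alpha \wedge \beta$ is alternating: I would show it vanishes whenever two consecutive arguments $u_j = u_{j+1}$ coincide. The standard argument is to partition $S(p,q)$: a shuffle $\sigma$ either sends both $j$ and $j+1$ into the first block (where $\alpha$ alternating kills the term), or both into the second block (where $\beta$ alternating kills it), or splits them, one to each block — and the split shuffles pair off via the transposition $(j\;\,j{+}1)$ into cancelling pairs of opposite sign, because swapping $u_j$ and $u_{j+1}$ changes $\sgn{\sigma}$ but leaves the summand otherwise unchanged (as $u_j = u_{j+1}$). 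Hence $(\alpha \wedge \beta)$ vanishes on such tuples, so it is alternating; equivalently one checks antisymmetry under adjacent transpositions directly.

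Next I would verify $\struct{X}$-bilinearity of $\wedge$ itself, i.e. that $(a\alpha + \alpha') \wedge \beta = a(\alpha \wedge \beta) + \alpha' \wedge \beta$ and symmetrically in $\beta$, for $a \in \struct{X}(U)$. This is immediate from the displayed formula: each summand is $\struct{X}(U)$-linear in $\alpha$ through the $\struct{X}(U)$-linearity of $\mu_U$ in its first argument, and linear in $\beta$ through linearity of $\mu_U$ in its second argument, and the finite sum over $S(p,q)$ preserves this. Finally, for the sheaf statement I would note that for $V \subset U$ the restriction $(\alpha \wedge \beta)|_V = (\alpha|_V) \wedge (\beta|_V)$ holds on the nose, since $\mu$ is a morphism of sheaves and the shuffle sum is a pointwise algebraic expression; thus the collection $\{\wedge_U\}_U$ is a morphism of sheaves $\cat{A}lt^p_{\struct{X}}(\cat{K},\cat{F}) \times \cat{A}lt^q_{\struct{X}}(\cat{K},\cat{G}) \to \cat{A}lt^{p+q}_{\struct{X}}(\cat{K},\cat{H})$, which is $\struct{X}$-bilinear because it is so sectionwise.

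The only genuinely non-formal step is the alternating property, and within it the combinatorial pairing of the "split" shuffles; everything else is routine diagram-free verification, which is why the statement is declared straightforward. I would present the alternating check carefully (perhaps via the adjacent-transposition sign computation, which is the cleanest) and treat bilinearity and the restriction-compatibility as one-line observations.
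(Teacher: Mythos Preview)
Your proof is correct and complete; the paper itself gives no proof at all, merely stating that ``the following proposition is straight-forward to prove.'' Your explicit verification of the alternating property via the shuffle-pairing argument, together with the routine checks of multilinearity, $\struct{X}$-bilinearity of $\wedge$, and restriction compatibility, is exactly the kind of argument the paper is silently invoking.
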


The construction in Proposition \ref{pr1} produces the following corollary.

\begin{corollary}
Let $D^\cat{F}, D^\cat{G}$ and $D^\cat{H}$ be $S$-connections on 
$\cat{F},\cat{G}$ and \cat{H} respectively, which are compatible with 
$\mu$. Let $ D^\cat{K}$ be an $S$-connection on $\cat{K}$. Denote the induced
connections on 
$\cat{A}lt^p_{\struct{X}}(\cat{K},\cat{F})$, 
$\cat{A}lt^p_{\struct{X}}(\cat{K},\cat{G})$ and
$\cat{A}lt^{p+1}_{\struct{X}}(\cat{K},\cat{H})$ by $D^\cat{F}$, $D^\cat{G}$ 
and $D^\cat{H}$ respectively. Then $D^\cat{F}$, $D^\cat{G}$, $D^\cat{H}$
and $\wedge$ are compatible, that is,
$$
D^{\cat{H}}_\xi(\alpha \wedge \beta) \,=\, 
D^{\cat{F}}_\xi(\alpha) \wedge \beta + \alpha \wedge D^{\cat{G}}_\xi(\beta)
$$
for all local sections $\xi$ of $\DER[S]{\struct{X}}{\struct{X}}$, $\alpha$ of
$\cat{A}lt^p_{\struct{X}}(\cat{K},\cat{F})$ and $\beta$ of 
$\cat{A}lt^q_{\struct{X}}(\cat{K},\cat{G})$.
\end{corollary}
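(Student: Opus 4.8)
The plan is to reduce the corollary to a direct computation on sections, carrying the definitions from Section \ref{S-Con-Mult} and Proposition \ref{pr1} through the shuffle sum. First I would fix an open set $U$, a local section $\xi$ of $\DER[S]{\struct{X}}{\struct{X}}$ over $U$, local sections $\alpha$ of $\cat{A}lt^p_{\struct{X}}(\cat{K},\cat{F})$ and $\beta$ of $\cat{A}lt^q_{\struct{X}}(\cat{K},\cat{G})$, and local sections $u_1,\dots,u_{p+q}$ of $\cat{K}$. The goal is the identity
$$
D^{\cat{H}}_\xi(\alpha \wedge \beta)(u_1,\dots,u_{p+q}) \,=\, \bigl(D^{\cat{F}}_\xi(\alpha) \wedge \beta\bigr)(u_1,\dots,u_{p+q}) + \bigl(\alpha \wedge D^{\cat{G}}_\xi(\beta)\bigr)(u_1,\dots,u_{p+q})\, .
$$
Expanding the left side by the formula defining the induced $S$-connection on $\cat{A}lt^{p+q}_{\struct{X}}(\cat{K},\cat{H})$ gives
$$
D^{\cat{H}}_\xi\bigl((\alpha\wedge\beta)(u_1,\dots,u_{p+q})\bigr) - \sum_{j=1}^{p+q}(\alpha\wedge\beta)(u_1,\dots,D^{\cat{K}}_\xi(u_j),\dots,u_{p+q})\, .
$$

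Next I would substitute the shuffle-sum definition of $\alpha\wedge\beta$ into both terms. In the first term, $D^{\cat{H}}_\xi$ is applied to $\sum_{\sigma\in S(p,q)}\sgn{\sigma}\,\mu\bigl(\alpha(u_{\sigma(1)},\dots),\beta(u_{\sigma(p+1)},\dots)\bigr)$; since $D^{\cat{F}},D^{\cat{G}},D^{\cat{H}}$ are compatible with $\mu$, the Leibniz-type rule for $\mu$ lets me write $D^{\cat{H}}_\xi\mu(a,b) = \mu(D^{\cat{F}}_\xi a, b) + \mu(a, D^{\cat{G}}_\xi b)$. Then $D^{\cat{F}}_\xi$ applied to $\alpha(u_{\sigma(1)},\dots,u_{\sigma(p)})$, using the defining formula for the induced connection on $\cat{A}lt^p_{\struct{X}}(\cat{K},\cat{F})$, produces $D^{\cat{F}}_\xi(\alpha)(u_{\sigma(1)},\dots) + \sum_{i=1}^{p}\alpha(u_{\sigma(1)},\dots,D^{\cat{K}}_\xi(u_{\sigma(i)}),\dots)$, and symmetrically for $\beta$. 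Collecting the ``primed'' terms $D^{\cat{F}}_\xi(\alpha)$ and $D^{\cat{G}}_\xi(\beta)$ over all shuffles reconstitutes exactly $D^{\cat{F}}_\xi(\alpha)\wedge\beta + \alpha\wedge D^{\cat{G}}_\xi(\beta)$ evaluated on $(u_1,\dots,u_{p+q})$, by the definition of $\wedge$.

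The remaining work — and the only place any care is needed — is to check that the leftover terms cancel: namely, the terms $\sum_{\sigma}\sgn{\sigma}\,\mu(\sum_i\alpha(\dots,D^{\cat{K}}_\xi(u_{\sigma(i)}),\dots),\beta(\dots)) + (\text{same with }\beta)$ coming from expanding the first term must cancel against $-\sum_{j=1}^{p+q}(\alpha\wedge\beta)(u_1,\dots,D^{\cat{K}}_\xi(u_j),\dots,u_{p+q})$. This is the combinatorial heart: for each index $j$, the term $(\alpha\wedge\beta)(\dots,D^{\cat{K}}_\xi(u_j),\dots)$ itself unwinds into a shuffle sum over $S(p,q)$, and in each shuffle $j$ lands in exactly one of the two blocks, so the total is precisely the union of the two families of leftover terms above; the signs match because inserting $D^{\cat{K}}_\xi(u_j)$ in place of $u_j$ does not change the permutation or its sign. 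I expect this bookkeeping of shuffles — verifying the index $j$ is routed to the $\alpha$-block exactly when $\sigma^{-1}(j)\le p$ and matching signs — to be the main (though still elementary) obstacle; everything else is a mechanical application of the compatibility hypothesis and the connection formulas from Sections \ref{S-Con-Mult} and the corollary's setup. Since Proposition \ref{pr1} already guarantees $\wedge$ is well-defined and $\struct{X}$-bilinear, and the formula is checked on arbitrary local sections over every $U$, the identity holds as sheaf morphisms, completing the proof.
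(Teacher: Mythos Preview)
Your proposal is correct and is precisely the direct computation the paper has in mind; the paper itself does not give a proof but simply states that the corollary follows from the construction in Proposition~\ref{pr1}, so your explicit expansion via the shuffle sum and the compatibility of $D^{\cat{F}},D^{\cat{G}},D^{\cat{H}}$ with $\mu$ is exactly the intended argument spelled out in full.
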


\subsection{The relative Lie derivative}\label{Sec:Lie-Der}

Let \cat{F} be an $\cat{O}_X$-module and $D$ an $S$-connection on
\cat{F}. Let $p \geq 1$ be an integer, $U$ an open subset of $X$, 
$\xi \in \DER[S]{\struct{X}}{\struct{X}}(U)$, and 
$\alpha \in \cat{L}^p_{\struct{X}}(\DER[S]{\struct{X}}{\struct{X}},\cat{F}
)(U)$. Then the map 
$$
\theta_\xi(\alpha)\,:\,(\DER[S]{\struct{X}}{\struct{X}}(U))^p \,\longrightarrow\, \cat{F}(U)\, ,
$$
defined by
$$
\theta_\xi(\alpha)(\eta_1,\cdots,\eta_p) \,=\, 
D_\xi(\alpha(\eta_1,\cdots,\eta_p))
- \sum^p_{i=1} \alpha(\eta_1,\cdots,\eta_{i-1},[\xi,\eta_i],\eta_{i+1}
,\cdots,\eta_p)
$$
for all $\eta_1,\,\cdots,\,\eta_p \,\in \,\DER[S]{\struct{X}}{\struct{X}}(U)$, is an 
$\struct{X}(U)$-multilinear map. Moreover, the map 
$$
\theta_\xi\,:\, 
\cat{F}^p_{\struct{X}}(\DER[S]{\struct{X}}{\struct{X}},\cat{F})(U)
\,\longrightarrow\, \cat{F}^p_{\struct{X}}(\DER[S]{\struct{X}}{\struct{X}},\cat{F})(U) 
$$
is $S$-linear, because $D_\xi$ is $S$-linear.

The $S$-linear morphism 
$$
\theta \,:\, \DER[S]{\struct{X}}{\struct{X}}\,\longrightarrow\, 
\END[S]{\cat{L}^p_{\struct{X}}(\DER[S]{\struct{X}}{\struct{X}},\cat{F})}
$$
is called the \textbf{relative Lie derivation} in degree $p$ associated with $D$.

\begin{remark}
\label{Rem:7}\mbox{}
\begin{enumerate}

\item The relative Lie derivation satisfies the followings:
\begin{itemize}
\item $\theta_\xi(\alpha + \beta) \,=\, 
\theta_\xi(\alpha) + \theta_\xi(\beta)$,

\item $\theta_\xi(a \alpha) \,=\, 
\xi(a) \alpha + a \theta_\xi(\alpha)$,

\item $\theta_{\xi + \zeta} (\alpha)\,=\, \theta_\xi(\alpha) +
\theta_\zeta(\alpha)$, and

\item $\theta_{s\xi}(\alpha) \,=\, s \theta_\xi(\alpha)$
\end{itemize}
for all local sections $\alpha,\beta$ of
$\cat{L}^p_{\struct{X}}(\DER[S]{\struct{X}}{\struct{X}},\,\cat{F})$, 
$\xi,\, \zeta$ of $\DER[S]{\struct{X}}{\struct{X}}$, $a$ of \struct{X}, and
$s$ of \struct{S}.

\item If $\alpha$ is alternating (respectively, symmetric), then so
is $\theta_\xi(\alpha)$, that is,
$$
\theta_\xi(\cat{A}lt^p_{\struct{X}}(\DER[S]{\struct{X}}{\struct{X}},\,
\cat{F}))
\,\subset\, \cat{A}lt^p_{\struct{X}}(\DER[S]{\struct{X}}{\struct{X}},\, \cat{F})
$$
(respectively, $\theta_\xi(\cat{S}ym^p_{\struct{X}}(\DER[S]{\struct{X}}{\struct{X}},\,
\cat{F}))
\,\subset\, \cat{S}ym^p_{\struct{X}}(\DER[S]{\struct{X}}{\struct{X}},\,\cat{F})$).
\end{enumerate}
\end{remark}

\subsubsection{The Lie derivative and the exterior product}\label{Sec:Lie-Der-Ext}

Let $\cat{F}$, $\cat{G}$, and $\cat{H}$ be $\struct{X}$-modules equipped with $S$-connections 
$D^\cat{F}$, $D^\cat{G}$ and $D^\cat{H}$ respectively. Let $\mu\,:\, \cat{F} \times \cat{G} \,
\longrightarrow\, \cat{H}$
be an $\struct{X}$-bilinear map. Take integers $p\,\geq\, 1$ and $q\,\geq\, 1$. Then, we have an
$\struct{X}$-bilinear map
$$
\wedge\,:\,\cat{A}lt^p_{\struct{X}}(\DER[S]{\struct{X}}{\struct{X}},\,\cat{F}) 
\times \cat{A}lt^q_{\struct{X}}(\DER[S]{\struct{X}}{\struct{X}},\,\cat{G}) 
$$
$$
\,\longrightarrow\,
\cat{A}lt^{p+q}_{\struct{X}}(\DER[S]{\struct{X}}{\struct{X}},\, \cat{H})\, .
$$

Suppose that $D^\cat{F}$, $D^\cat{G}$, $D^\cat{H}$ and $\mu$ are compatible, 
that is,
$$
D^\cat{H}_\xi(\mu(u,v)) \,=\, \mu(D^\cat{F}_\xi(u),v) + 
\mu(u, D^\cat{G}_\xi(v))
$$
for all local sections $\xi$ of $\DER[S]{\struct{X}}{\struct{X}}$, $u$ of
$\cat{F}$, and $v$ of \cat{G}.

Then we have
$$
\theta_\xi(\alpha \wedge \beta)\,=\, \theta_\xi(\alpha)\wedge\beta + 
\alpha \wedge \theta_\xi(\beta)
$$
where the Lie derivations are associated with their respective connections,
while
$\alpha$ and $\beta$ are local sections of their respective $\struct{X}$-modules.

\subsection{Covariant derivative}\label{Sec:Cov-Der} 

Let $\cat{F}$ be an $\struct{X}$-module and 
$\xi \in \DER[S]{\struct{X}}{\struct{X}}(U)$, 
where $U \,\subset\, X$ is an open subset, and $p \geq 2$ an integer. For
each $$\alpha\,\in\, \cat{L}^p_{\struct{X}}(\DER[S]{\struct{X}}{\struct{X}},\cat{F})(U)\, ,$$
define $(\imath_\xi(\alpha))_U \,\in\,
\cat{L}^{p-1}_{\struct{X}}
(\DER[S]{\struct{X}}{\struct{X}},\cat{F})(U)$ by
$$
(\imath_\xi(\alpha))_U(\eta_1,\cdots,\eta_{p-1})\, =\, \alpha(\xi,\eta_1,\cdots
,\eta_{p-1})
$$
for all $\eta_1,\,\cdots,\, \eta_{p-1} \,\in\, \DER[S]{\struct{X}}{\struct{X}}(U)$.
When $\alpha$ is of degree $0$, we define $\imath_\xi(\alpha)_U \,=\, 0$.
We call 
$(\imath_\xi(\alpha))_U$ the \textbf{relative interior product} of $\xi$ and $\alpha$ over 
$U$. This yields an $\struct{X}$-module homomorphism 
$$
\imath\,:\, \DER[S]{\struct{X}}{\struct{X}} \,\longrightarrow\, 
\HOM[\struct{X}]{\cat{L}^p_{\struct{X}}(\DER[S]{\struct{X}}{\struct{X}}
,\,\cat{F})}{\cat{L}^{p-1}_{\struct{X}}(\DER[S]{\struct{X}}{\struct{X}}
,\,\cat{F})}
$$
defined by $\imath_U(\xi)(\alpha)\, =\, \imath_\xi(\alpha)_U$, for every open subset $U$ of $X$.

The interior product satisfies the following properties:
\begin{enumerate}
\item $\imath_{\xi + \eta} \,=\, \imath_\xi + \imath_\eta$, for all
local sections $\xi$ and $\eta$ of $\DER[S]{\struct{X}}{\struct{X}}$.

\item $\imath_{a\xi} \,=\, a \imath_\xi $, for all local sections 
$a$ of $\struct{X}$ and $\xi$ of $\DER[S]{\struct{X}}{\struct{X}}$.

\item If $D$ is an $S$-connection on \cat{F}, and $\theta$ the
associated relative Lie derivative, then for all local sections $\xi,\, \eta$ of 
$\DER[S]{\struct{X}}{\struct{X}}$,
$$
\theta_\xi \circ \imath_\eta - \imath_\eta \circ \theta_\xi \,=\, 
\imath_{[\xi,\eta]}\, .
$$

\item If $\alpha$ is a local section of
$\cat{A}lt^p_{\struct{X}}(\DER[S]{\struct{X}}{\struct{X}}$, then 
$\imath_\xi(\imath_\xi(\alpha)) \,=\, 0$.

\item Let $\cat{F}$, $\cat{G}$ and $\cat{H}$ be \struct{X}-modules equipped with 
$S$-connections $D^\cat{F}$, $D^\cat{G}$ and $D^\cat{H}$ respectively. Let 
$\mu\,:\, \cat{F} \times \cat{G} \,\longrightarrow\, \cat{H}$ be an
$\struct{X}$-bilinear map. Let 
$p \,\geq\, 1$ and $q \,\geq\, 1$ be integers. Then, we have an $\struct{X}$-bilinear map
$$
\wedge\,:\,\cat{A}lt^p_{\struct{X}}(\DER[S]{\struct{X}}{\struct{X}},\,\cat{F}) 
\times \cat{A}lt^q_{\struct{X}}(\DER[S]{\struct{X}}{\struct{X}},\,\cat{G}) \\
\,\longrightarrow\,\cat{A}lt^{p+q}_{\struct{X}}(\DER[S]{\struct{X}}{\struct{X}},\,\cat{H})\, .
$$ 
Suppose that $D^\cat{F}$, $D^\cat{G}$, $D^\cat{H}$ and $\mu$ are compatible,
that is,
$$
D^\cat{H}_\xi(\mu(u,v)) \,=\, \mu(D^\cat{F}_\xi(u),v) + 
\mu(u, D^\cat{G}_\xi(v))
$$
for all local sections $\xi$ of $\DER[S]{\struct{X}}{\struct{X}}$, $u$ of
\cat{F}, and $v$ of \cat{G}. Then
$$
\imath_\xi(\alpha \wedge \beta) \,=\, \imath_\xi(\alpha)\wedge\beta +
(-1)^p \alpha \wedge \imath_\xi(\beta)\,,
$$
where the Lie derivations are associated with their respective connections,
while $\alpha$ and $\beta$ are local sections of their respective $\struct{X}$-modules.
\end{enumerate}

\begin{proposition}\label{pro:2}
Let $D$ be an $S$-connection on an $\struct{X}$-module \cat{F}. Then, there
exists a unique family of $S$-linear morphism 
$$
d \,=\, d_p\,:\,\cat{A}lt^p_{\struct{X}}(\DER[S]{\struct{X}}{\struct{X}},\,\cat{F})
\,\longrightarrow\,
\cat{A}lt^{p+1}_{\struct{X}}(\DER[S]{\struct{X}}{\struct{X}},\, \cat{F})\, ,
$$
where $p\, \in\, \mathbb N$, such that 
$$
\theta_\xi \,=\, d \circ \imath_\xi + \imath_\xi \circ d 
$$
for all local sections $\xi$ of $\DER[S]{\struct{X}}{\struct{X}}$.
\end{proposition}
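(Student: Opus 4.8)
The plan is to construct the family $\{d_p\}_{p\in\N}$ by induction on $p$, anchored in degree $0$ by the connection $D$ itself, and to read uniqueness off the same recursion. \emph{Uniqueness.} On $\cat{A}lt^0_{\struct{X}}(\DER[S]{\struct{X}}{\struct{X}},\cat{F}) = \cat{F}$ the operator $\imath_\xi$ vanishes, so the required identity reduces to $\theta_\xi = \imath_\xi\circ d_0$, which forces $(d_0 s)(\xi) = \theta_\xi(s) = D_\xi(s)$; hence $d_0$ is uniquely determined. Assuming $d_0,\ldots,d_{p-1}$ already known and unique, if $d_p$ and $d_p'$ both satisfy the identity in degree $p$, then for every local section $\xi$ of $\DER[S]{\struct{X}}{\struct{X}}$ we get $\imath_\xi(d_p\alpha) = \theta_\xi(\alpha) - d_{p-1}(\imath_\xi\alpha) = \imath_\xi(d_p'\alpha)$; since $\imath_\xi$ merely inserts $\xi$ in the first argument and this holds for all $\xi$, we conclude $d_p\alpha = d_p'\alpha$.

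\emph{Existence.} Define $d_0\colon \cat{F}\longrightarrow \cat{A}lt^1_{\struct{X}}(\DER[S]{\struct{X}}{\struct{X}},\cat{F}) = \HOM[\struct{X}]{\DER[S]{\struct{X}}{\struct{X}}}{\cat{F}}$ by $s\longmapsto(\xi\mapsto D_\xi(s))$; it is $\struct{X}$-linear in $\xi$ because $D$ is a morphism of $\struct{X}$-modules, $S$-linear because each $D_\xi$ is, compatible with restriction maps, and satisfies the degree-$0$ identity by construction. For $p\geq 1$ and a local section $\alpha$ of $\cat{A}lt^p_{\struct{X}}(\DER[S]{\struct{X}}{\struct{X}},\cat{F})$ over an open set $U\subset X$, set
$$
(d_p\alpha)(\xi_0,\xi_1,\ldots,\xi_p)\ :=\ \bigl(\theta_{\xi_0}(\alpha) - d_{p-1}(\imath_{\xi_0}\alpha)\bigr)(\xi_1,\ldots,\xi_p)\, ;
$$
unwinding the recursion reproduces the familiar Koszul formula for the exterior covariant derivative. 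Granting that $d_p\alpha$ is again an alternating $\struct{X}$-multilinear map, the Cartan identity $\theta_\xi = d_p\circ\imath_\xi + \imath_\xi\circ d_p$ is immediate by rearrangement, the $S$-linearity of $d_p$ follows from $S$-linearity of $\theta_\xi$, $\imath_\xi$ and $d_{p-1}$, and compatibility with restriction is built into the formula, so the $d_p$ glue to the desired $\struct{X}$-module morphism. Because the recursion invokes $d_{p-1}$, the induction hypothesis must also carry the Leibniz rule $d_{p-1}(a\gamma) = (d_{X/S}a)\wedge\gamma + a\,d_{p-1}(\gamma)$, where $\wedge$ is the pairing of Proposition \ref{pr1} attached to scalar multiplication $\struct{X}\times\cat{F}\to\cat{F}$ and $d_{X/S}a$ is paired with a derivation $\xi$ as $\xi(a)$; this rule is re-derived for $d_p$ from the Leibniz rules of $\theta_{\xi_0}$ and $\imath_{\xi_0}$.

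The step I expect to require the most care — the main obstacle — is the verification that $(d_p\alpha)$ genuinely lies in $\cat{A}lt^{p+1}_{\struct{X}}(\DER[S]{\struct{X}}{\struct{X}},\cat{F})$. It is visibly alternating and $\struct{X}$-multilinear in $\xi_1,\ldots,\xi_p$, so what remains is $\struct{X}$-linearity in $\xi_0$ and skew-symmetry under the transposition $\xi_0\leftrightarrow\xi_1$. For the first, $D_{a\xi_0}=aD_{\xi_0}$ and $[a\xi_0,\xi_i]=a[\xi_0,\xi_i]-\xi_i(a)\xi_0$ give
$$
\theta_{a\xi_0}(\alpha)(\xi_1,\ldots,\xi_p) = a\,\theta_{\xi_0}(\alpha)(\xi_1,\ldots,\xi_p) + \sum_{i=1}^{p}\xi_i(a)\,\alpha(\xi_1,\ldots,\xi_{i-1},\xi_0,\xi_{i+1},\ldots,\xi_p)\, ,
$$
while $\imath_{a\xi_0}\alpha = a\,\imath_{\xi_0}\alpha$ and the Leibniz rule for $d_{p-1}$ contribute a term $(d_{X/S}a)\wedge\imath_{\xi_0}\alpha$ whose evaluation on $(\xi_1,\ldots,\xi_p)$ cancels the displayed sum, leaving exactly $a\,(d_p\alpha)(\xi_0,\ldots,\xi_p)$. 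For the second, expand $d_{p-1}(\imath_{\xi_1}\alpha)(\xi_0,\ldots)$ and $d_{p-1}(\imath_{\xi_0}\alpha)(\xi_1,\ldots)$ using the recursion defining $d_{p-1}$, then apply the commutation relation $\theta_{\xi_0}\circ\imath_{\xi_1} - \imath_{\xi_1}\circ\theta_{\xi_0} = \imath_{[\xi_0,\xi_1]}$ from Section \ref{Sec:Cov-Der} together with $\imath_{\xi_0}\imath_{\xi_1}\alpha = -\imath_{\xi_1}\imath_{\xi_0}\alpha$: all terms cancel in pairs, and combined with the alternation already present in the remaining $p$ slots this yields full antisymmetry. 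The case $p=1$ is handled directly, where $(d_1\alpha)(\xi_0,\xi_1) = D_{\xi_0}(\alpha(\xi_1)) - \alpha([\xi_0,\xi_1]) - D_{\xi_1}(\alpha(\xi_0))$ is manifestly skew; for $p\geq 2$ the argument above applies, since $d_{p-2}$ is already available.
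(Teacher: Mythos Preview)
Your argument is correct: the inductive construction of $d_p$ via the formula $\imath_{\xi_0}(d_p\alpha)=\theta_{\xi_0}(\alpha)-d_{p-1}(\imath_{\xi_0}\alpha)$, together with the verification of $\struct{X}$-linearity in $\xi_0$ (using the Leibniz rule for $d_{p-1}$) and of skew-symmetry in $\xi_0,\xi_1$ (using $\theta_\xi\imath_\eta-\imath_\eta\theta_\xi=\imath_{[\xi,\eta]}$ and $\imath_\xi\imath_\eta=-\imath_\eta\imath_\xi$), is exactly the proof given in Koszul's notes \cite[Chapter~I, Theorem~2]{K1}, which is what the paper cites rather than reproducing.
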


\begin{proof}
This is proved in \cite[p.~11, Chapter \textbf{I}, Theorem 2]{K1}.
\end{proof}

The $S$-linear morphism $d$ in Proposition \ref{pro:2}
is called the \textbf{covariant derivative} associated with $D$.

\subsubsection{Explicit formula for the covariant derivative}\label{Exp-for-cov-der}

The following proposition is straight-forward.

\begin{proposition}\label{lem:1}
Let $D$ be an $S$-connection on an $\struct{X}$-module $\cat{F}$. Then, the
covariant derivative with respect to $D$ is given by 
$$
d(\alpha)(\xi_1,\cdots ,\xi_{p+1}) \,=\, \sum^{p+1}_{i = 1} (-1)^{i+1} D_{\xi_i
}(\alpha(\xi_1,\cdots ,\hat{\xi_i},\ldots,\xi_{p+1})) 
$$
$$
+ \,
\sum_{1 \leq i < j \leq p+1} (-1)^{(i+1)} \alpha([\xi_i,\xi_j],\xi_1,\ldots
,\hat{\xi_i},\ldots,\hat{\xi_j},\ldots,\xi_{p+1})
$$
for all local sections $\alpha$ of 
$\cat{A}lt^p_{\struct{X}}(\DER[S]{\struct{X}}{\struct{X}},\cat{F})$ and 
$\xi_1,\,\cdots,\,\xi_{p+1}$ of $\DER[S]{\struct{X}}{\struct{X}}$.
\end{proposition}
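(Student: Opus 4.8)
# Proof proposal for Proposition \ref{lem:1}

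The plan is to verify the stated explicit formula by induction on $p$, using the defining relation $\theta_\xi = d\circ\imath_\xi + \imath_\xi\circ d$ from Proposition \ref{pro:2} together with the explicit formulas for $\theta_\xi$ (from Section \ref{Sec:Lie-Der}) and for $\imath_\xi$ (from Section \ref{Sec:Cov-Der}). Since the question is local, I would fix an open set $U\subset X$ and work with sections over $U$ throughout, suppressing $U$ from the notation.

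First I would treat the base case $p=0$. Here $\cat{A}lt^0_{\struct{X}}(\DER[S]{\struct{X}}{\struct{X}},\cat{F})=\cat{F}$, and for a local section $s$ of $\cat{F}$ the relation $\theta_\xi = d\circ\imath_\xi + \imath_\xi\circ d$ reduces (since $\imath_\xi$ vanishes in degree $0$) to $\theta_\xi(s)=\imath_\xi(d(s))=(d s)(\xi)$. But $\theta_\xi(s)=D_\xi(s)$ by the definition of the Lie derivative in degree $0$, so $(ds)(\xi)=D_\xi(s)$, which is exactly the claimed formula for $p=0$ (only the first sum survives, with a single term). For the inductive step, assume the formula holds for degree $p-1$. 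Given $\alpha$ a local section of $\cat{A}lt^p_{\struct{X}}(\DER[S]{\struct{X}}{\struct{X}},\cat{F})$ and $\xi_1,\dots,\xi_{p+1}$ local sections of $\DER[S]{\struct{X}}{\struct{X}}$, I would compute $d(\alpha)(\xi_1,\dots,\xi_{p+1})=\bigl(\imath_{\xi_1}(d\alpha)\bigr)(\xi_2,\dots,\xi_{p+1})$ and then use $\imath_{\xi_1}\circ d = \theta_{\xi_1} - d\circ\imath_{\xi_1}$ to rewrite this as $\theta_{\xi_1}(\alpha)(\xi_2,\dots,\xi_{p+1}) - \bigl(d(\imath_{\xi_1}\alpha)\bigr)(\xi_2,\dots,\xi_{p+1})$. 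The first term expands via the explicit formula for $\theta_{\xi_1}$: it equals $D_{\xi_1}(\alpha(\xi_2,\dots,\xi_{p+1})) - \sum_{j=2}^{p+1}\alpha(\xi_2,\dots,[\xi_1,\xi_j],\dots,\xi_{p+1})$. The second term, since $\imath_{\xi_1}\alpha$ has degree $p-1$, is handled by the induction hypothesis applied to $\imath_{\xi_1}\alpha$, giving a sum over $i\in\{2,\dots,p+1\}$ of terms $(-1)^{i}D_{\xi_i}\bigl(\alpha(\xi_1,\xi_2,\dots,\widehat{\xi_i},\dots,\xi_{p+1})\bigr)$ plus the corresponding bracket terms.

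Then the bookkeeping step is to collect everything and match signs. The term $D_{\xi_1}(\alpha(\xi_2,\dots,\xi_{p+1}))$ is the $i=1$ term of the first sum in the target formula. The terms $-(-1)^{i}D_{\xi_i}(\alpha(\xi_1,\dots,\widehat{\xi_i},\dots,\xi_{p+1}))=(-1)^{i+1}D_{\xi_i}(\dots)$ for $i=2,\dots,p+1$ supply the remaining terms of the first sum. The bracket term $-\sum_{j=2}^{p+1}\alpha(\xi_2,\dots,[\xi_1,\xi_j],\dots,\xi_{p+1})$ from $\theta_{\xi_1}$ contributes the $i=1$ part of the double sum in the target formula (one must move $[\xi_1,\xi_j]$ to the front of the argument list using alternation of $\alpha$, which produces the sign making it agree). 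Finally, the bracket terms coming from the induction hypothesis applied to $\imath_{\xi_1}\alpha$ — which involve $[\xi_i,\xi_j]$ with $2\le i<j\le p+1$ and a leading $\xi_1$ — reassemble, after sign tracking, into the $i\ge 2$ part of the target double sum. I would present this matching carefully but without belaboring each individual sign.

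The main obstacle I anticipate is precisely the sign bookkeeping in the last step: reconciling the sign conventions in the explicit $\theta_\xi$ formula, the $(-1)^p$ appearing in the interior-product Leibniz rule (property (5) in Section \ref{Sec:Cov-Der}), and the shift of indices when $\imath_{\xi_1}\alpha$ is fed into the induction hypothesis, all while using alternation of $\alpha$ to normalize the position of the bracket arguments. A cleaner alternative, which I would mention as a remark, is to bypass the induction entirely: define $d'$ by the right-hand side of the claimed formula, check directly that $d'$ is $S$-linear and sends $\cat{A}lt^p$ to $\cat{A}lt^{p+1}$ (antisymmetry follows from the standard Cartan-calculus verification, which is routine since $[\xi_i,\xi_j]=-[\xi_j,\xi_i]$ and the $D_{\xi_i}$ satisfy no integrability constraint here), verify $\theta_\xi = d'\circ\imath_\xi + \imath_\xi\circ d'$ by a direct expansion of both sides, and then invoke the uniqueness clause of Proposition \ref{pro:2} to conclude $d'=d$. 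Either route reduces the proposition to a finite, mechanical verification once the conventions are pinned down.
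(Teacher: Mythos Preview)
Your inductive argument via the Cartan identity $\imath_{\xi_1}\circ d = \theta_{\xi_1} - d\circ\imath_{\xi_1}$, as well as the alternative you sketch using the uniqueness clause of Proposition~\ref{pro:2}, are both correct and are the standard routes to this formula. The paper, however, gives no proof at all: it simply declares the proposition ``straight-forward'' and moves on, so there is nothing substantive to compare your argument against.

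One caution, since you yourself flag sign bookkeeping as the chief hazard: the exponent $(-1)^{i+1}$ in the second (bracket) sum of the displayed statement is a typo in the paper --- it should read $(-1)^{i+j}$. You can see this already from Corollary~\ref{cor:2}\eqref{27}: with $p=1$, $\xi_1=\xi$, $\xi_2=\eta$, the stated sign gives $+\alpha([\xi,\eta])$ rather than the required $-\alpha([\xi,\eta])$. If you actually execute your induction you will arrive at $(-1)^{i+j}$, not at the sign printed in the statement, so do not be alarmed when the match is not verbatim.
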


Proposition \ref{lem:1} gives the following:

\begin{corollary}\label{cor:2}
The following three hold.
\begin{enumerate}
\item $ d(\alpha)(\xi) \,=\, D_\xi(\alpha)$
for all local sections $\alpha $ of \cat{F} and $\xi$ of 
$\DER[S]{\struct{X}}{\struct{X}}$.

\item \label{27}
$d(\alpha)(\xi,\eta) \,=\, D_\xi(\alpha(\eta)) - D_\eta(\alpha(\xi)) - 
\alpha([\xi,\eta])$,
for all local sections $\alpha$ of
$\cat{A}lt^2_{\struct{X}}(\DER[S]{\struct{X}}{\struct{X}},\,\cat{F})$ and 
$\xi,\,\eta$ of $\DER[S]{\struct{X}}{\struct{X}}$.

\item
$d(\alpha)(\xi,\eta,\nu) \,=\, \sum_{cyclic} (D_\xi(\alpha(\eta,\nu)) 
- \alpha([\xi,\eta],\nu))$.
\end{enumerate}
\end{corollary}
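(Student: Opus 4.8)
The plan is to derive all three identities by specializing the explicit formula for the covariant derivative $d$ obtained in Proposition \ref{lem:1} to the degrees $p=0$, $p=1$ and $p=2$; apart from the substitution, the only work is a small rearrangement in the degree-two case, using that $\alpha$ is alternating, in order to cast the answer in the stated cyclic form.

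For (1), take $p=0$, so that $\alpha$ is simply a local section of $\cat{F}$, with no arguments. In the formula of Proposition \ref{lem:1} the double sum over pairs $1\le i<j\le p+1=1$ is empty, while the single sum reduces to the term $i=1$, which is $D_{\xi_1}(\alpha)$; writing $\xi$ for $\xi_1$ gives $d(\alpha)(\xi)=D_\xi(\alpha)$. For (2), take $p=1$ and evaluate on $\xi,\eta$: the first sum of Proposition \ref{lem:1} yields $D_\xi(\alpha(\eta))-D_\eta(\alpha(\xi))$, and the second sum, now running over the single pair $(i,j)=(1,2)$, contributes the bracket term, which after inserting the prescribed sign is $-\alpha([\xi,\eta])$. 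This is precisely the second assertion.

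For (3), take $p=2$ and evaluate on $\xi,\eta,\nu$. The first sum of Proposition \ref{lem:1} produces the three terms $D_\xi(\alpha(\eta,\nu))-D_\eta(\alpha(\xi,\nu))+D_\nu(\alpha(\xi,\eta))$, and the second sum produces the three bracket terms coming from the pairs among $\{1,2,3\}$. Using the alternating property of $\alpha$ to flip, for instance, $\alpha(\xi,\nu)=-\alpha(\nu,\xi)$, and the corresponding reorderings in the bracket terms, one recognizes the first group as $\sum_{\mathrm{cyclic}}D_\xi(\alpha(\eta,\nu))$ and the second as $-\sum_{\mathrm{cyclic}}\alpha([\xi,\eta],\nu)$, the cyclic sum running over the three cyclic permutations of $(\xi,\eta,\nu)$. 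Adding the two groups gives the third assertion.

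There is no genuine obstacle here: the corollary is a direct, mechanical consequence of Proposition \ref{lem:1}. The only point deserving attention is the sign bookkeeping in the degree-two computation together with the repackaging of the six resulting summands into the compact cyclic-sum notation, which is exactly where antisymmetry of $\alpha$ enters.
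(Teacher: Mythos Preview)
Your proposal is correct and follows exactly the route indicated in the paper: the corollary is stated there immediately after Proposition~\ref{lem:1} with the remark that the latter ``gives'' it, and your argument simply spells out the specializations $p=0,1,2$ of that explicit formula. (As a side remark, the exponent $(-1)^{i+1}$ in the second sum of Proposition~\ref{lem:1} as printed is evidently a typo for the standard $(-1)^{i+j}$; your computation tacitly uses the corrected sign, which is what is required to reproduce the signs stated in the corollary.)
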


\subsubsection{Covariant derivative and exterior product}\label{Cov-der-ext-pro}

The following proposition is straight-forward.

\begin{proposition}\label{pro:3}
Let $\cat{F}$, $\cat{G}$ and $\cat{H}$ be \struct{X}-modules equipped with $S$
connections $D^\cat{F}$, $D^\cat{G}$ and $D^\cat{H}$ respectively. Let $ \mu\,:\, 
\cat{F} \times \cat{G}
\,\longrightarrow\, \cat{H}$ be an $\struct{X}$-bilinear map. Suppose that $ D^\cat{F}$,
$\cat{G}$, $D^\cat{H}$ and $\mu$ are compatible. Then for all local sections $\alpha$ of
$\cat{A}lt^p_{\struct{X}}(\DER[S]{\struct{X}}{\struct{X}},\,\cat{F})$ and 
$\beta$ of $\cat{A}lt^\bullet_{\struct{X}}(\DER[S]{\struct{X}}{\struct{X}},\,\cat{F})$,
we have
$$
d(\alpha \wedge \beta) \,=\, d(\alpha) \wedge \beta + 
 (-1)^p \alpha \wedge d(\beta)\, .
$$
\end{proposition}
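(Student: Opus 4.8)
The plan is to prove the identity by induction on the total degree $n\,=\,p+q$, using two facts. First, a section $\gamma$ of $\cat{A}lt^{k}_{\struct{X}}(\DER[S]{\struct{X}}{\struct{X}},\cat{H})$ with $k\,\geq\,1$ vanishes as soon as $\imath_\xi(\gamma)\,=\,0$ for every local section $\xi$ of $\DER[S]{\struct{X}}{\struct{X}}$, because $\gamma(\eta_1,\cdots,\eta_k)\,=\,\imath_{\eta_1}(\gamma)(\eta_2,\cdots,\eta_k)$. Second, the relative Cartan formula $\imath_\xi\circ d\,=\,\theta_\xi-d\circ\imath_\xi$ of Proposition \ref{pro:2}. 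Since the difference $d(\alpha\wedge\beta)-d(\alpha)\wedge\beta-(-1)^p\alpha\wedge d(\beta)$ has degree $n+1\,\geq\,1$, it suffices to check that $\imath_\xi$ applied to $d(\alpha\wedge\beta)$ and to $d(\alpha)\wedge\beta+(-1)^p\alpha\wedge d(\beta)$ agree for all local $\xi$. In total degree $0$ (the base case) this reduces to $D^{\cat{H}}_\xi(\mu(\alpha,\beta))\,=\,\mu(D^{\cat{F}}_\xi\alpha,\beta)+\mu(\alpha,D^{\cat{G}}_\xi\beta)$ evaluated against $\xi$, i.e.\ to the compatibility hypothesis combined with Corollary \ref{cor:2}(1).

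For the inductive step I would first apply Proposition \ref{pro:2} to get $\imath_\xi\,d(\alpha\wedge\beta)\,=\,\theta_\xi(\alpha\wedge\beta)-d(\imath_\xi(\alpha\wedge\beta))$, then the Leibniz rule for the relative Lie derivative (Subsection \ref{Sec:Lie-Der-Ext}) applied to $\theta_\xi(\alpha\wedge\beta)$, and property (5) of Subsection \ref{Sec:Cov-Der} — extended to degree $0$ by the convention that $\imath_\xi$ annihilates degree-$0$ forms — applied to $\imath_\xi(\alpha\wedge\beta)$, obtaining
$$
\imath_\xi\,d(\alpha\wedge\beta)\,=\,(\theta_\xi\alpha)\wedge\beta+\alpha\wedge(\theta_\xi\beta)-d(\imath_\xi\alpha\wedge\beta)-(-1)^p d(\alpha\wedge\imath_\xi\beta)\, .
$$
Now I apply the induction hypothesis to the two pairs $(\imath_\xi\alpha,\beta)$ and $(\alpha,\imath_\xi\beta)$, each of total degree $n-1$ and still compatible with the same $\mu$, to expand $d(\imath_\xi\alpha\wedge\beta)$ and $d(\alpha\wedge\imath_\xi\beta)$, and I expand each $\theta_\xi$ by Proposition \ref{pro:2} once more. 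After cancelling the matching terms $d(\imath_\xi\alpha)\wedge\beta$ and $\alpha\wedge d(\imath_\xi\beta)$ that occur with opposite signs, this leaves
$$
\imath_\xi\,d(\alpha\wedge\beta)\,=\,\imath_\xi(d\alpha)\wedge\beta-(-1)^p d\alpha\wedge\imath_\xi\beta+(-1)^p\imath_\xi\alpha\wedge d\beta+\alpha\wedge\imath_\xi(d\beta)\, .
$$
On the other side, applying property (5) directly to $\imath_\xi(d\alpha\wedge\beta)$ (with $d\alpha$ of degree $p+1$) and to $\imath_\xi(\alpha\wedge d\beta)$ produces exactly these same four terms for $\imath_\xi\!\left(d\alpha\wedge\beta+(-1)^p\alpha\wedge d\beta\right)$, so the two sides agree and the induction closes.

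The one genuinely delicate point is the sign bookkeeping in the inductive step: property (5) gets applied to forms of degrees $p$, $p-1$ and $p+1$, contributing factors $(-1)^p$, $(-1)^{p-1}$ and $(-1)^{p+1}$, and one must check that they conspire to give the cancellations above. An alternative, purely computational proof avoids the induction altogether — substitute the explicit formula of Proposition \ref{lem:1} for $d$ and the shuffle-sum definition of $\wedge$ into both sides and match terms — which is presumably why the statement is recorded as straightforward; I would nonetheless favour the inductive route, since it confines the combinatorics to a single sign check and reuses the relative Cartan formula together with the Leibniz rules already established for $\theta_\xi$ and $\imath_\xi$.
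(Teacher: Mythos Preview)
Your argument is correct. The paper does not actually give a proof of this proposition --- it merely records the statement as ``straight-forward'' --- so there is nothing to compare against beyond that remark. The inductive route you take, via the relative Cartan identity $\imath_\xi\circ d=\theta_\xi-d\circ\imath_\xi$ together with the Leibniz rules for $\theta_\xi$ and $\imath_\xi$ on $\wedge$, is the standard way to organise this verification and the sign bookkeeping you outline checks out: the cancellations of $d(\imath_\xi\alpha)\wedge\beta$ and $\alpha\wedge d(\imath_\xi\beta)$ and the residual four terms match exactly with $\imath_\xi$ applied to $d\alpha\wedge\beta+(-1)^p\alpha\wedge d\beta$.

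One small comment on the boundary of the induction. In the paper the product $\wedge$ of Proposition~\ref{pr1}, as well as the Leibniz rules in Subsection~\ref{Sec:Lie-Der-Ext} and property~(5) of Subsection~\ref{Sec:Cov-Der}, are literally stated only for $p\geq 1$ and $q\geq 1$. Your base case $p=q=0$ and the inductive step when one degree drops to zero therefore rely on the (entirely standard) extension $\alpha\wedge\beta:=\mu(\alpha,\beta)$ in degree zero, together with the convention $\imath_\xi=0$ there, which you already flag. If you want to stay strictly within the paper's stated hypotheses you could instead start the induction at $n=2$ and handle the degree-$(1,1)$ case directly via Corollary~\ref{cor:2}(1)--(2) and compatibility; either way the content is the same.
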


Proposition \ref{pro:3} gives the following:

\begin{corollary}\label{cor:3}
Let \cat{F} be an \struct{X}-module with connection $D$. Then
$$
d(a \alpha) \,=\, d(a) \wedge \alpha + a d(\alpha)
$$
for all local sections $a$ of $\struct{X}$ and $\alpha$ of
$\cat{A}lt^\bullet_{\struct{X}}(\DER[S]{\struct{X}}{\struct{X}},\,\cat{F})$,
where $d(a)$ is the covariant derivative of $a$ with respect to the
canonical connection on $\struct{X}$.
\end{corollary}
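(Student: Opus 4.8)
The plan is to obtain the corollary as a direct specialization of Proposition \ref{pro:3}. I would take, in the notation of that proposition, $\cat{F} \,=\, \struct{X}$ equipped with the canonical $S$-connection $\epsilon$ of Remark \ref{Rem:2}, and $\cat{G} \,=\, \cat{H} \,=\, \cat{F}$ each equipped with the given $S$-connection $D$. For the bilinear map I would take the scalar multiplication
$$
\mu\,:\, \struct{X} \times \cat{F} \,\longrightarrow\, \cat{F}\, ,\qquad (a,\,s) \,\longmapsto\, as\, ,
$$
which is manifestly $\struct{X}$-bilinear.

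The one hypothesis of Proposition \ref{pro:3} that must be checked is the compatibility of $\epsilon$, $D$, $D$ and $\mu$. For these data compatibility reads
$$
D_\xi(\mu(a,\,s)) \,=\, \mu(\epsilon_\xi(a),\,s) + \mu(a,\,D_\xi(s))
$$
for every local section $\xi$ of $\DER[S]{\struct{X}}{\struct{X}}$. Since $\epsilon$ is by definition the inclusion $\DER[S]{\struct{X}}{\struct{X}} \hookrightarrow \END[S]{\struct{X}}$, we have $\epsilon_\xi(a) \,=\, \xi(a)$, so the required identity becomes $D_\xi(as) \,=\, \xi(a)s + aD_\xi(s)$; this is precisely the Leibniz rule that is part of the definition of the $S$-connection $D$ (Section \ref{Sec:S-con}). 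Hence the compatibility hypothesis holds.

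It remains to specialize the conclusion. A local section $a$ of $\struct{X}$ is the same as a local section of $\cat{A}lt^0_{\struct{X}}(\DER[S]{\struct{X}}{\struct{X}},\,\struct{X})$, so I would apply Proposition \ref{pro:3} with the degree-$0$ form $\alpha \,=\, a$ and with $\beta \,=\, \alpha$ of arbitrary degree. This yields
$$
d(a \wedge \alpha) \,=\, d(a) \wedge \alpha + (-1)^0\, a \wedge d(\alpha)\, .
$$
Finally I would note that, in the exterior product of Proposition \ref{pr1}, wedging with a degree-$0$ factor coincides with the scalar action, so $a \wedge \alpha \,=\, a\alpha$ and $a \wedge d(\alpha) \,=\, a\,d(\alpha)$, while $(-1)^0 \,=\, 1$. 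Substituting these identifications gives exactly $d(a\alpha) \,=\, d(a) \wedge \alpha + a\,d(\alpha)$.

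The whole argument is a bookkeeping specialization; the only step with genuine content is the compatibility verification, which collapses verbatim to the Leibniz rule for $D$, so I foresee no real obstacle beyond handling the degree-$0$ identifications carefully.
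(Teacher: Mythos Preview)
Your proposal is correct and follows exactly the approach indicated by the paper, which simply states that the corollary is obtained from Proposition~\ref{pro:3}. You have merely spelled out the specialization that the paper leaves implicit: take $\cat{F}=\struct{X}$ with the canonical connection, $\cat{G}=\cat{H}=\cat{F}$ with $D$, and $\mu$ the scalar multiplication, noting that compatibility is precisely the Leibniz rule.
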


\subsection{The curvature form}\label{Cur-Form}

Let $D$ be an $S$-connection on an \struct{X}-module \cat{F}, and let
$$
d:\cat{A}lt^\bullet_{\struct{X}}(\DER[S]{\struct{X}}{\struct{X}},\cat{F})
\, \longrightarrow\, \cat{A}lt^{\bullet+1}_{\struct{X}}(\DER[S]{\struct{X}}{\struct{X}},\cat{F})
$$
be the covariant derivative associated with $D$. Then the map
$$
d \circ d \,=\, d^2\,:\,\cat{A}lt^\bullet_{\struct{X}}(\DER[S]{\struct{X}
}{\struct{X}},\,\cat{F}) \, \longrightarrow\, \cat{A}lt^{\bullet+2}_{\struct{X}}(\DER[S
]{\struct{X}}{\struct{X}},\,\cat{F})
$$
is called the curvature operator of $D$, and it will be denoted by $R$.

Let $ \alpha $ be a local section of $\cat{A}lt^0_{\struct{X}}(\DER[S
]{\struct{X}}{\struct{X}},\,\cat{F})\,=\,\cat{F}$. 
Then $R(\alpha)\,=\,d(d(\alpha))$ is a local section of 
$\cat{A}lt^2_{\struct{X}}(\DER[S]{\struct{X}}{\struct{X}},\,\cat{F})$.
Let $\xi$ and $\eta$ be local sections of $\DER[S]{\struct{X}}{\struct{X}}$. Then
\begin{align*}
 R(\alpha)(\xi, \eta) & \,=\, d(d(\alpha))(\xi,\,\eta) \\
& \,=\, D_\xi(d(\alpha)(\eta)) - D_\eta(d(\alpha)(\xi))- d(\alpha)([\xi,\,\eta])\\
& \,=\, D_\xi(D_\eta(\alpha)) - D_\eta(D_\xi(\alpha)) - D_{[\xi, \eta]}(\alpha)\, .
\end{align*} 

Thus, for every open subset $U$ of $X$ and for all sections
$\xi,\,\eta\,\in\, \DER[S]{\struct{X}}{\struct{X}}(U)$, we get an 
$\struct{X}|_U$-module homomorphism
$$
K_U(\xi,\eta)\,:\, \cat{F}|_U \, \longrightarrow\, \cat{F}|_U
$$
defined by
$$
K_U(\xi,\,\eta)\,= \,D_\xi \circ D_\eta - D_\eta \circ D_\xi - D_{[\xi,\eta]}\,.
$$
Hence these $K_U$ together define an $\struct{X}$-bilinear map
$$
K\,:\, \DER[S]{\struct{X}}{\struct{X}} \times 
\DER[S]{\struct{X}}{\struct{X}} \, \longrightarrow\, \END[\struct{X}]{\cat{F}}\, .
$$

The following is straight-forward.

\begin{proposition}\label{lem:2}
The above $\struct{X}$-bilinear map $K$ is an alternating map.
\end{proposition}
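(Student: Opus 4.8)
The plan is to show that $K(\xi,\eta) = -K(\eta,\xi)$ for all local sections $\xi,\eta$ of $\DER[S]{\struct{X}}{\struct{X}}$, which is exactly the assertion that the $\struct{X}$-bilinear map $K$ is alternating. Since $K$ is already known to be $\struct{X}$-bilinear (established just above the statement), it suffices to work with its defining formula
$$
K_U(\xi,\eta) \,=\, D_\xi \circ D_\eta - D_\eta \circ D_\xi - D_{[\xi,\eta]}
$$
on an arbitrary open set $U$. First I would simply interchange the roles of $\xi$ and $\eta$ in this formula to obtain
$$
K_U(\eta,\xi) \,=\, D_\eta \circ D_\xi - D_\xi \circ D_\eta - D_{[\eta,\xi]},
$$
and then add the two expressions. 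The first four terms cancel in pairs immediately, leaving $K_U(\xi,\eta) + K_U(\eta,\xi) = -D_{[\xi,\eta]} - D_{[\eta,\xi]}$.

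The only remaining point is that $[\eta,\xi] = -[\xi,\eta]$ as local sections of $\DER[S]{\struct{X}}{\struct{X}}$, together with the fact that $\xi \mapsto D_\xi$ is additive (indeed $\struct{X}$-linear) so that $D_{-[\xi,\eta]} = -D_{[\xi,\eta]}$. Both facts are available: the bracket on $\END[S]{\struct{X}}$ is the commutator $[\xi,\eta] = \xi\circ\eta - \eta\circ\xi$ (Section \ref{sec:S-der}), which is visibly antisymmetric and restricts to $\DER[S]{\struct{X}}{\struct{X}}$ since that is a Lie subalgebra, and the additivity of $D$ in the subscript is part of the definition of an $S$-connection as an $\struct{X}$-module homomorphism $\DER[S]{\struct{X}}{\struct{X}} \to \END[S]{\cat{F}}$. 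Hence $-D_{[\xi,\eta]} - D_{[\eta,\xi]} = -D_{[\xi,\eta]} + D_{[\xi,\eta]} = 0$, so $K_U(\xi,\eta) = -K_U(\eta,\xi)$ on every $U$, and the maps glue to give $K(\xi,\eta) = -K(\eta,\xi)$.

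There is essentially no obstacle here; the statement is a one-line computation, and the phrase ``straight-forward'' preceding it signals exactly that. The only thing to be mildly careful about is that all identities should be read at the level of presheaves on each open $U \subset X$ and then seen to be compatible with restriction — but this is automatic because every operation involved ($D_\xi$, composition, the bracket) commutes with the restriction maps by construction. One could alternatively deduce the result from the identity $K(\alpha)(\xi,\eta) = R(\alpha)(\xi,\eta) = d(d(\alpha))(\xi,\eta)$ together with the fact that elements of $\cat{A}lt^2_{\struct{X}}(\DER[S]{\struct{X}}{\struct{X}},\cat{F})$ are by definition alternating, but the direct computation from the commutator formula is cleaner and self-contained.
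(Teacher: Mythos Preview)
Your proposal is correct and matches the paper's approach exactly: the paper gives no proof beyond the phrase ``straight-forward,'' and your direct computation from the defining formula is precisely the intended verification. One tiny refinement: ``alternating'' strictly means $K(\xi,\xi)=0$, which you obtain immediately by setting $\eta=\xi$ in the formula (since $[\xi,\xi]=0$), without needing to pass through antisymmetry and the invertibility of $2$; but in the holomorphic or smooth setting the distinction is moot.
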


The alternating \struct{X}-bilinear map $K$ is called the \textbf{curvature
form} of $D$. We say the $S$-connection is \textbf{flat} if the curvature
form is identically zero.

\subsection{Connection and curvature matrices}\label{cc-mt}

Let $\cat{F}$ be a locally free coherent $\struct{X}$-module of
rank $r$. Let $U$ be an open subset of $X$ such that $\cat{F}|_U$ is a 
free $\struct{X}|_U$-module. Let $s \,=\, (s_1,\,\cdots,\,s_r)$ be an 
$\struct{X}|_U$-basis of $\cat{F}|_U$. For each 
$\xi\,\in\,\DER[S]{\struct{X}}{\struct{X}}(U)$, define an $r \times r$ matrix
$$
\omega(\xi) \,=\, (\omega_{ij}(\xi))_{1 \leq i,j \leq r}
$$
of elements of $\struct{X}(U)$ by the equation
$$
D_\xi(s_j) \,=\, \sum^r_{i = 1}\omega_{ij}(\xi)s_i \ \ (1 \,\leq\, j\, \leq\, r)\, .
$$
We, thus get, for every $i,\,j\, \in\, \{1,\,\cdots,\,r\}$, an element $\omega_{ij}$ of
$$\HOM[\struct{X}]{\DER[S]{\struct{X}}{\struct{X}}}{\struct{X}}(U) \,=\, 
 \cat{A}lt^1_{\struct{X}} (\DER[S]{\struct{X}}{\struct{X}},\struct{X})
 (U)\, .$$
 This gives an $r \times r$ matrix $\omega \,=\, (\omega_{ij})_{1 \leq i,j \leq
 r}$, where entries are sections of 
 $\cat{A}lt^1_{\struct{X}} (\DER[S]{\struct{X}}{\struct{X}},\struct{X}) $
 over $U$. It is called the 
 \textbf{connection matrix} of $D$ with respect to $s$. Considering 
 $s \,=\, (s_1,\,\cdots,\,s_r)$ as a row vector, this $\omega$ is the unique 
 $r \times r$ matrix over 
 $\cat{A}lt^1_{\struct{X}} (\DER[S]{\struct{X}}{\struct{X}},\struct{X})(U)$
 such that
$$
 D_\xi(\omega) \,=\, s \omega(\xi)
$$
for all $\xi \,\in\, \DER[S]{\struct{X}}{\struct{X}}(U)$. 
If $u\,=\, \sum^r_{j = 1} a_j s_j \,\in\, \cat{F}(U)$, where $a_{j}\,\in\, \struct{X}(U)$, then 
$$
 D_\xi(u)\,=\, s (\xi(a) + \omega(\xi)a)
$$
 for all $\xi \,\in\, \DER[S]{\struct{X}}{\struct{X}}(U)$.
 If
 $$d \,=\, d_{X/S}\,:\,\cat{A}lt^0_{\struct{X}} (\DER[S]{\struct{X}}{\struct{X}},\,\struct{X})
 \,=\, \struct{X}
\, \longrightarrow\, \cat{A}lt^1_{\struct{X}} (\DER[S]{\struct{X}}{\struct{X}},\,\struct{X}) 
 $$
 is the covariant derivative associated with canonical connection 
 on \struct{X}, then
$$
D_\xi(u) \,=\, s (d(a) + \omega(\xi)a)
$$
for all $\xi \,\in\, \DER[S]{\struct{X}}{\struct{X}}(U)$.

Let $t \,=\, (t_1,\,\cdots,\,t_r)$ be another $\struct{X}|_U$-basis of 
$\cat{F}|_U$, and $ t_j \,=\, \sum^r_{i=1} a_{ij}s_i$, for all
$1 \,\leq\, j \,\leq\, r$. Then the matrix
$a \,=\, (a_{ij})_{1 \leq i,j \leq r}$ is an element of
${\rm GL}_r(\struct{X}(U))$. Let $\omega'$ be the connection matrix of $D$ 
 with respect to $t$. Then we have
$$
 \omega' \,=\, a^{-1}da + a^{-1}\omega a\, .
$$
Let $K$ be the curvature form of $D$. For all 
$\xi,\, \eta \,\,in\, \DER[S]{\struct{X}}{\struct{X}}(U)$, 
let $$\Omega(\xi,\eta) \,=\, (\Omega_{ij}(\xi,\,\eta))_{1 \leq i, j \leq r}$$
 be the $ r\times r$ matrix over $\struct{X}(U)$, defined by
$$
 K(\xi,\,\eta)(s_j) \,=\, \sum^r_{ i = 1} \Omega_{ij}(\xi,\eta)s_i
$$
for $1 \,\leq\, j \,\leq\, r$.
We thus get for all $ i,\,j \,\in \,\{1,\,\cdots,\,r\} $ an element 
$$\Omega_{ij}\, \in\, 
\cat{A}lt^2_{\struct{X}}\big(\DER[S]{\struct{X}} \,\struct{X},\,
\struct{X}\big)(U)\, .$$ This gives a $r\times r$ matrix 
$\Omega\,=\, (\Omega_{ij})_{1 \leq i,j \leq r}$ whose entries are sections of
$\cat{A}lt^2_{\struct{X}} (\DER[S]{\struct{X}}{\struct{X}},\,\struct{X})$ over
$U$. It is called the \textbf{curvature matrix} of $D$ with respect to $s$. 
Considering $s\,=\, (s_1,\,\cdots,\,s_r)$ as a row vector, this $\Omega$ is the unique
$r\times r$ matrix over 
$\cat{A}lt^2_{\struct{X}} (\DER[S]{\struct{X}}{\struct{X}},\,\struct{X})(U)$
such that
$$
 K(\xi,\,\eta)s \,=\, s \Omega(\xi,\, \eta)
$$
 for all $\xi,\, \eta \,\in\, \DER[S]{\struct{X}}{\struct{X}}(U)$.
 
We have $\Omega \,=\, d \omega + \omega \wedge \omega.$
If $t \,=\, (t_1,\,\cdots,\,t_r)$ is another $\struct{X}|_U$-basis of 
$\cat{F}|_U$ as above, and $\Omega'$ is the curvature matrix of 
$D$ with respect to $t$, then 
$$
 \Omega' \,=\, a^{-1} \Omega a\, ,
$$
where $a \,=\, (a_{ij})_{1 \leq i,j \leq r}$ as before. 
 
\section{S-Differential Operator}
 
\subsection{First order S-differential operator}\label{First-diff}

Let $(\pi,\,\pi^\sharp)\,:\,\ringed{X} \,\longrightarrow\, \ringed{S}$ be a morphism of ringed
spaces. Let $\cat{F}$ and $\cat{G}$ be two $\struct{X}$-modules.
A first order $S$-differential operator is a morphism
$$
P \,:\, \cat{F} \,\longrightarrow\, \cat{G}
$$
of sheaves of abelian groups such that 
\begin{enumerate}
\item $P$ is an $S$-linear morphism, and

\item for every open subset $U\, \subset\, X$ and for every 
$f \,\in\, \struct{X}(U)$, the bracket 
$[P|_U,\,f]\,:\,\cat{F}|_U \,\longrightarrow\, \cat{G}|_U$ defined as 
$$
[P|_U,\, f]_V(s) \,=\, P_V(f|_V s) - f|_V P_V(s)
$$
is an $\struct{U}$-module homomorphism for every open subset $V\, \subset\, 
U$ and all $ s \,\in \,\cat{F}(V)$.
\end{enumerate}
Let $\Diff[1]{S}{\cat{F}}{\cat{G}}$ denote the set of all
first order $S$-differential operator. Then 
$\Diff[1]{S}{\cat{F}}{\cat{G}}$ is an
$\struct{X}(X)$-module. For every open subset $U$ of $X$, 
$U\,\longmapsto\, \Diff[1]{S}{\cat{F}|_U}{\cat{G}|_U}$ is a sheaf of first order
$S$-differential operator from $\cat{F}|_U$ to 
$\cat{G}|_U$. This sheaf is denoted by $\DIFF[1]{S}{\cat{F}}{\cat{G}}$.

\subsection{Symbol of a first order S-differential operator}\label{Symb-op}

Given a first order $S$-differential operator $P\,:\, \cat{F} \,\longrightarrow\, \cat{G}$,
define a morphism of abelian sheaves 
$$
\theta\,:\, \struct{X} \,\longrightarrow\, \HOM[\struct{X}]{\cat{F}}{\cat{G}}
$$
by $\theta_U(f) \,=\, [P|_U,\,f]$
for every open subset $U\, \subset\, X$ and $ f \,\in \,\struct{X}(U)$.
Then $\theta$ is an $S$-derivation. For every $W \,\subset\, S$, 
$V \,\subset\, \pi^{-1}(W)$, $s\,\in\, \struct{S}(W)$, $t \,\in\, \struct{X}(V)$, 
and $u \in \cat{F}(V)$, we have, by $S$-linearity of $P$,
\begin{align*}
\theta_V(s|_V t)(u) & \,=\, [P|_V,\,s|_V t](u)\\
& \,=\, P(s|_V t)(u) - s|_V t P(u) \\
& \,=\, (s|_V) P(t)u - s|_V t P(u) \\
& \,=\, (s|_V)[P|_V,t](u)\, .
\end{align*}
Thus $\theta$ is an $S$-linear morphism.

Next, to verify $\theta$ satisfies Leibniz rule, for every
$f ,\, g \,\in\, \struct{X}(U)$, where $U\, \subset\,X$ is any
open subset, and for every $u \,\in\, \cat{F}(U)$, we have
$[P|_U,\, f](g u) \,= \, P(f g u) - f P(g u)$
which gives $P(fgu) \,=\, g [P,f](u) + fP(gu)$.
Now, 
\begin{align*}
\theta(fg)(u) & \,=\, [P,\,fg](u)\\
& \,=\, P(fgu) - fgP(u)\\
& \,=\, g[P,f](u) + fP(gu) - fgP(u)\\
& \,=\, g \theta(f)(u) + f \theta(g)(u)\\
& \,=\, (g \theta(f) + f \theta(g))(u) .\\
\end{align*}
Thus, $ \theta(fg) \,=\, \theta(f)g + f \theta(g)$.

Hence, we have the following:

\begin{proposition}\label{prop}
Let $\cat{F}$ and $\cat{G}$ be $\struct{X}$-modules and
$P\,:\, \cat{F} \,\longrightarrow\, \cat{G}$ a first order $S$-differential operator.
Then there exists a unique $S$-derivation
$$
\theta\,:\, \struct{X} \,\longrightarrow\, \HOM[\struct{X}]{\cat{F}}{\cat{G}}
$$
such that $\theta_U(f) \,=\, [P|_U,\,f]$
for every open subset $U$ of $X$ and for every $f \,\in\, \struct{X}(U)$.
\end{proposition}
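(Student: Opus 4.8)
The plan is to extract, from the computations already carried out just before the statement, exactly the three assertions the proposition packages: that the assignment $\theta_U(f) = [P|_U, f]$ defines a morphism of abelian sheaves $\struct{X} \to \HOM[\struct{X}]{\cat{F}}{\cat{G}}$, that this morphism is $S$-linear, and that it satisfies the Leibniz rule; these together say $\theta$ is an $S$-derivation in the sense of Section \ref{sec:S-der}. Then I would observe that uniqueness is immediate: a morphism of sheaves is determined by its values on sections over every open set, and the defining equation $\theta_U(f) = [P|_U, f]$ prescribes these values outright, so any two $S$-derivations satisfying this equation coincide.

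First I would verify that $\theta$ is well defined as a map into $\HOM[\struct{X}]{\cat{F}}{\cat{G}}$. Given an open $U \subset X$ and $f \in \struct{X}(U)$, the bracket $[P|_U, f]\colon \cat{F}|_U \to \cat{G}|_U$ is, by the very definition of a first order $S$-differential operator (condition (2) in Section \ref{First-diff}), an $\struct{U}$-module homomorphism, hence a section of $\HOM[\struct{X}]{\cat{F}}{\cat{G}}$ over $U$. Compatibility of $\theta$ with restriction maps follows because $[P|_U, f]$ restricts to $[P|_V, f|_V]$ for $V \subset U$, which is a formal consequence of $P$ being a morphism of sheaves. Additivity of $\theta_U$ in $f$ is clear from bilinearity of the bracket in the relevant slot, so $\theta$ is a morphism of sheaves of abelian groups.

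Next I would record the two substantive properties, both of which are established verbatim in the paragraphs preceding the statement. The $S$-linearity computation shows, using the $S$-linearity of $P$, that for $W \subset S$, $V \subset \pi^{-1}(W)$, $s \in \struct{S}(W)$, $t \in \struct{X}(V)$ and $u \in \cat{F}(V)$ one has $\theta_V(s|_V \, t)(u) = (s|_V)\,[P|_V, t](u) = (s|_V)\,\theta_V(t)(u)$, which is precisely the $S$-linearity condition of Section \ref{sec:S-Lin-mor-sheaves:}. The Leibniz computation, again already displayed, uses the identity $P(fgu) = g[P,f](u) + fP(gu)$ (obtained by unwinding $[P|_U, f](gu)$) to conclude $\theta(fg)(u) = (g\,\theta(f) + f\,\theta(g))(u)$ for all $f, g \in \struct{X}(U)$ and $u \in \cat{F}(U)$, i.e. $\theta(fg) = \theta(f)g + f\,\theta(g)$. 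Thus $\theta$ satisfies both axioms of an $S$-derivation.

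There is really no hard part here; the proposition is a formal repackaging of the displayed calculations, and the only thing to be careful about is bookkeeping — making sure that the bracket lands in $\struct{U}$-linear homomorphisms rather than merely $S$-linear ones (this is exactly what condition (2) in the definition of a first order $S$-differential operator buys us), and that the uniqueness claim is read correctly as uniqueness among $S$-derivations realizing the prescribed formula. I would therefore present the proof as: (i) $\theta$ is a morphism of abelian sheaves into $\HOM[\struct{X}]{\cat{F}}{\cat{G}}$ by condition (2); (ii) $\theta$ is $S$-linear by the first displayed computation; (iii) $\theta$ satisfies Leibniz by the second displayed computation; (iv) uniqueness because the formula pins down all section-level values. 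The main (very minor) obstacle is simply ensuring the reader sees that nothing beyond the definitions and the preceding two displays is needed.
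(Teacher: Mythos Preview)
Your proposal is correct and matches the paper's approach exactly: the paper proves the proposition by the two displayed computations (S-linearity and Leibniz) immediately preceding the statement, and then simply writes ``Hence, we have the following:'' before stating the proposition. Your only addition is the explicit remark on uniqueness, which the paper leaves implicit since the formula defines $\theta$ outright.
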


The $S$-derivation $\theta$ defined above is called the 
\textbf{symbol of $P$}; it will be denoted by $\sigma_1(P)$.

Every $\struct{X}$-module homomorphism is a first order $S$-differential
operator. Therefore, \HOM[\struct{X}]{\cat{F}}{\cat{G}} is an 
\struct{X}-submodule of $\DIFF[1]{S}{\cat{F}}{\cat{G}}$. Let
$$ 
\imath \,:\, \HOM[\struct{X}]{\cat{F}}{\cat{G}}
\,\longrightarrow\,\DIFF[1]{S}{\cat{F}}{\cat{G}} 
$$ 
be the inclusion morphism. Thus, we have an exact sequence of \struct{X}-modules
\begin{equation}\label{symb}
0 \,\longrightarrow\, \HOM[\struct{X}]{\cat{F}}{\cat{G}}\,
\stackrel{\imath}{\longrightarrow}\,
\DIFF[1]{S}{\cat{F}}{\cat{G}} 
\,\stackrel{\sigma_1}{\longrightarrow}\, \DER[S]{\struct{X}}{\HOM[\struct{X}]
{\cat{F}}{\cat{G}}} \, .
\end{equation}
In general, $\sigma_1$ need not be surjective.

\section{Analytic Theory}\label{sec:An-th}
 
\subsection{Complex analytic families}\label{com-ana-fam}

Let $(S,\struct{S})$ be a complex manifold of dimension $n$. For each 
$t\,\in \,S$, let there be given a compact connected complex manifold $X_t$ 
of fixed dimension $l$. The set $\{X_t\,\mid\, t \in S \}$ of compact
connected complex manifolds is called a \emph{complex analytic family of
compact connected complex manifolds}, if there is a complex manifold 
$(X,\, \struct{X})$ and a surjective holomorphic map $\pi\,:\, X\,\longrightarrow\,
S$ of complex manifolds with connected fibers such that the followings hold:
\begin{enumerate}
\item $\pi^{-1}(t) \,=\, X_t$ for all $t\,\in\, S$,

\item $\pi^{-1}(t)$ is a compact connected complex 
submanifold of $X$ for all $t \,\in\, S$, and

\item the rank of the Jacobian matrix of $\pi$ is equal to 
$n$ at each point of $X$. 
\end{enumerate}
(See \cite{KS1} for details.)
In other words, $\pi \,:\, X \,\longrightarrow\,S$ is a surjective
holomorphic proper submersion, such that $\pi^{-1}(t) \,=\, X_t$ is connected for every $t
\,\in\, S$. 
 
\subsection{Holomorphic relative tangent and cotangent bundles}\label{Rel-tan-cotan-bun}

Let $\pi\,:\, X\,\longrightarrow\, S$ be a surjective holomorphic submersion of complex
manifolds with connected fibers such that $\dim X\,=\, m$ and $\dim S\,=\, n$. For any $t\, \in\, t$, the fiber
$\pi^{-1}(t)$ will be denoted by $X_t$.
Let $d\pi_S\,:\, TX \,\longrightarrow\, \pi^*TS$ be the differential of $\pi$.
The subbundle $$T(X/S)\, :=\, \Ker{d\pi_S}\, \subset\, TX$$ is called the relative tangent
bundle for $\pi$. Thus we have a short exact sequence
\begin{equation}\label{eq:ses1}
0 \,\longrightarrow\, \cat{T}_{X/S}\, \stackrel{\imath}{\longrightarrow}\,
\cat{T}_X \, \stackrel{d\pi_S}{\longrightarrow}\, \pi^* \cat{T}_S \,\longrightarrow\, 0
\end{equation}
of $\struct{X}$-modules and $\struct{X}$-linear maps.

The dual $T(X/S)^*$ of the relative tangent bundle is
called the relative cotangent bundle and it is denoted by $\Omega^1(X/S)$.
The sheaf of holomorphic sections of relative cotangent bundle 
$\Omega^1(X/S)$ will also be denoted by $\Omega^1_{X/S}$. Dualizing the short
exact sequence in \eqref{eq:ses1}, we get a short exact sequence
\begin{equation}\label{eq:ses2}
0 \,\longrightarrow\, \pi^* \Omega^1_{S} 
\, \stackrel{d\pi^*_S}{\longrightarrow}\,
\Omega^1_{X} \, \stackrel{\imath^*}{\longrightarrow}\,
\Omega^1_{X/S} \,\longrightarrow\, 0\, .
\end{equation}
The relative tangent sheaf $\cat{T}_{X/S}$ and the relative cotangent 
sheaf $\Omega^1_{X/S}$ are locally free $\struct{X}$-modules of rank
$l\,=\,m-n$.

From Theorem \ref{thm:1}, there exists a unique $S$-derivation 
$d_{X/S}\,:\,\struct{X} \,\longrightarrow\, \Omega^1_{X/S}$. For any integer
$r\,\geq\, 1$, define $\Omega^r_{X/S} \,=\, \Lambda^r \Omega^1_{X/Y}$, which
is called the sheaf of holomorphic relative $r$-forms on $X$ over
$S$. We have the short exact sequence
\begin{equation}\label{eq:ses3}
0 \,\longrightarrow\, \pi^*\Omega^1_S \otimes \Omega^{r-1}_X\,\longrightarrow\,
\Omega^r_X \,\longrightarrow\, \Omega^r_{X/S} \,\longrightarrow\, 0
\end{equation}
which is derived from the short exact sequence in \eqref{eq:ses2}.

\begin{theorem}\label{thm:1.5}
There exists canonical $\pi^{-1}\struct{S}$-linear maps
$\partial^r_{X/S}\,:\,\Omega^r_{X/S} \,\longrightarrow\,\Omega^{r+1}_{X/S}$, called the 
 relative exterior derivative, satisfying the following:
\begin{enumerate}
\item \label{1.51} $\partial^0_{X/S}\,= \,d_{X/S}\,:\, \struct{X}
\,\longrightarrow\,\Omega^1_{X/S}$,

\item \label{1.52} $\partial^{r+1}_{X/S} \circ \partial^r_{X/S} \,=\, 0$, and
 
\item \label{1.53} $\partial^{r+s}_{X/S}(\alpha \wedge \beta)
\,=\, \partial^r_{X/S} \alpha \wedge \beta + (-1)^r \alpha \wedge 
\partial^s_{X/S} \beta$, for all local sections $\alpha$ of 
 $\Omega^r_{X/S}$ and $\beta$ of $\Omega^s_{X/S}$.
\end{enumerate}
\end{theorem}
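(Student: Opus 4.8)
The plan is to construct $\partial^\bullet_{X/S}$ by descending the usual holomorphic exterior derivative $d_X$ on the absolute de Rham complex $\Omega^\bullet_X$ along the surjection onto relative forms furnished by \eqref{eq:ses2}--\eqref{eq:ses3}, and then to deduce \eqref{1.51}--\eqref{1.53} from the analogous properties of $d_X$.

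Concretely, let $q_r\,:\,\Omega^r_X\,\longrightarrow\,\Omega^r_{X/S}$ be the surjective $\struct{X}$-linear map occurring in \eqref{eq:ses3}; thus $q_0\,=\,\mathrm{id}_{\struct{X}}$, $q_1\,=\,\imath^*$, and, since $\Omega^\bullet_{X/S}\,=\,\Lambda^\bullet\Omega^1_{X/S}\,=\,\Lambda^\bullet(\Omega^1_X/\pi^*\Omega^1_S)$, the map $q_\bullet\,=\,\bigoplus_r q_r$ is a surjection of graded $\struct{X}$-algebras whose kernel $\cat{N}^\bullet$ is the graded two-sided ideal of $\Omega^\bullet_X$ generated by $\pi^*\Omega^1_S\,=\,\Img{d\pi^*_S}\,\subset\,\Omega^1_X$. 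The crucial point is that $\cat{N}^\bullet$ is a differential ideal for $d_X$: locally $\pi^*\Omega^1_S$ is generated over $\struct{X}$ by exact $1$-forms $d_X(\pi^\sharp(s))$ with $s$ a local section of $\struct{S}$ (because $\Omega^1_S$ is locally generated by $d_S(\struct{S})$), and for such generators $d_X(d_X(\pi^\sharp(s)))\,=\,0$, so the graded Leibniz rule for $d_X$ gives $d_X(\cat{N}^\bullet)\,\subseteq\,\cat{N}^\bullet$. Hence $d_X$ descends to a unique $\C$-linear map $\partial^r_{X/S}\,:\,\Omega^r_{X/S}\,\longrightarrow\,\Omega^{r+1}_{X/S}$ characterised by $q_{r+1}\circ d_X\,=\,\partial^r_{X/S}\circ q_r$; being built only from the canonical data $d_X$ and $q_\bullet$, it is canonical.

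Now \eqref{1.51}--\eqref{1.53} follow by transporting the corresponding identities for $d_X$ through the stalkwise surjective maps $q_\bullet$. For \eqref{1.51}: $\partial^0_{X/S}\,=\,\imath^*\circ d_X\,:\,\struct{X}\to\Omega^1_{X/S}$ is $\C$-linear, inherits the Leibniz rule from $d_X$, and is $S$-linear because $\imath^*(d_X(\pi^\sharp(s)))\,=\,0$ (that form is a section of $\pi^*\Omega^1_S\,=\,\ker q_1$); thus it is an $S$-derivation $\struct{X}\to\Omega^1_{X/S}$ and so, by the universal property in Theorem \ref{thm:1}, coincides with $d_{X/S}$. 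For \eqref{1.52}: from $q_{r+2}\circ d_X\circ d_X\,=\,\partial^{r+1}_{X/S}\circ\partial^r_{X/S}\circ q_r$ and $d_X\circ d_X\,=\,0$ we get $\partial^{r+1}_{X/S}\circ\partial^r_{X/S}\,=\,0$ since $q_r$ is an epimorphism of sheaves. For \eqref{1.53}: choose local lifts $\widetilde\alpha,\widetilde\beta$ of $\alpha,\beta$ along $q_\bullet$; as $q_\bullet$ is an algebra map, $q_{r+s}(\widetilde\alpha\wedge\widetilde\beta)\,=\,\alpha\wedge\beta$, and applying $q_{r+s+1}$ to $d_X(\widetilde\alpha\wedge\widetilde\beta)\,=\,d_X\widetilde\alpha\wedge\widetilde\beta+(-1)^r\widetilde\alpha\wedge d_X\widetilde\beta$ gives the identity. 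The same lifting device yields $\pi^{-1}\struct{S}$-linearity: $d_X(\pi^\sharp(s)\widetilde\alpha)\,=\,d_X(\pi^\sharp(s))\wedge\widetilde\alpha+\pi^\sharp(s)\,d_X\widetilde\alpha$, the first summand lies in $\cat{N}^\bullet$, and $q_\bullet$ is $\struct{X}$-linear, so $\partial^r_{X/S}(\pi^\sharp(s)\alpha)\,=\,\pi^\sharp(s)\,\partial^r_{X/S}(\alpha)$. Uniqueness of the family subject to \eqref{1.51}--\eqref{1.53} is then automatic: these force $\partial^r_{X/S}(f\,d_{X/S}g_1\wedge\cdots\wedge d_{X/S}g_r)\,=\,d_{X/S}f\wedge d_{X/S}g_1\wedge\cdots\wedge d_{X/S}g_r$, and such sections locally generate $\Omega^r_{X/S}$.

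The theorem is essentially formal, and the only point that needs any care is the well-definedness of the descent, i.e. that $d_X$ preserves the ideal $\cat{N}^\bullet$ generated by $\pi^*\Omega^1_S$ — which is immediate once one observes that this ideal is locally generated by $d_X$-exact forms. As an alternative staying entirely within the machinery already developed, one may instead take $\partial^\bullet_{X/S}$ to be the covariant derivative $d$ associated, via Propositions \ref{pro:2} and \ref{lem:1}, to the canonical $S$-connection $\epsilon$ on $\struct{X}$, under the identification $\Omega^r_{X/S}\,=\,\cat{A}lt^r_{\struct{X}}(\DER[S]{\struct{X}}{\struct{X}},\struct{X})$; then \eqref{1.51} is Corollary \ref{cor:2}(1) together with the universal property of $d_{X/S}$, \eqref{1.52} expresses the flatness of $\epsilon$ (its curvature form $K(\xi,\eta)\,=\,\xi\circ\eta-\eta\circ\xi-[\xi,\eta]$ vanishes), and \eqref{1.53} is Proposition \ref{pro:3}.
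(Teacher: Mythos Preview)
Your proposal is correct. The paper's own proof is precisely your ``alternative'' route sketched in the final paragraph: it identifies $\Omega^r_{X/S}$ with $\cat{A}lt^r_{\struct{X}}(\DER[S]{\struct{X}}{\struct{X}},\struct{X})$ via $\cat{T}_{X/S}\cong\DER[S]{\struct{X}}{\struct{X}}$, takes $\partial^\bullet_{X/S}$ to be the covariant derivative of the canonical $S$-connection $\epsilon$ on $\struct{X}$ (Proposition~\ref{pro:2}), obtains \eqref{1.53} from Proposition~\ref{pro:3}, and reads off \eqref{1.52} from Corollary~\ref{cor:2}\eqref{27} (equivalently, as you say, from flatness of $\epsilon$).

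Your main argument---descending the absolute $d_X$ along the algebra surjection $\Omega^\bullet_X\twoheadrightarrow\Omega^\bullet_{X/S}$ after checking that the ideal generated by $\pi^*\Omega^1_S$ is $d_X$-stable---is a genuinely different and more self-contained construction: it needs nothing from Section~\ref{sec:S-Conn} beyond the universal property of $d_{X/S}$, and it is the standard route in algebraic and analytic geometry. The paper's route, by contrast, exhibits the relative exterior derivative as a special case of the Koszul-style covariant-derivative formalism developed earlier, which is the organising theme of the paper and is reused verbatim for the smooth analogue in Theorem~\ref{thm:2}. A minor remark on your verification of \eqref{1.51}: the universal property of Theorem~\ref{thm:1} only gives a factorisation $\imath^*\circ d_X=\alpha\circ d_{X/S}$; that $\alpha=\mathrm{id}$ is part of identifying the quotient model $\Omega^1_X/\pi^*\Omega^1_S$ with the universal $\Omega^1_{X/S}$, which is immediate in local coordinates but worth making explicit.
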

 
\begin{proof}
This basically follows from Proposition \ref{pro:2}. First note that the sheaf 
$\DER[S]{\struct{X}}{\struct{X}}$ is canonically isomorphic to the holomorphic relative tangent 
sheaf $\cat{T}_{X/S}$ as an $\struct{X}$-module, and hence 
$\cat{A}lt^r_{\struct{X}}(\DER[S]{\struct{X}}{\struct{X}},\, \struct{X})$ is canonically 
isomorphic to the sheaf $\Omega^r_{X/S}$ as an $\struct{X}$-module. In view of the canonical 
$S$-connection in $\struct{X}$, by Proposition \ref{pro:2}, canonical $S$-linear map
exists satisfying \eqref{1.51}, and by Proposition \ref{pro:3}, it satisfies \eqref{1.53}. 
Finally, \eqref{1.52} follows from Corollary \ref{cor:2}\eqref{27}.
\end{proof}

\subsection{Relative Atiyah algebra}\label{Atiyah-alg}
 
\begin{proposition}[{Symbol exact sequence}]\label{pro:4}
Let $\pi \,:\, X \,\longrightarrow\, S$ be a holomorphic proper submersion of complex manifolds
with connected fibers,
and let $\cat{F}$ and $\cat{G}$ be two locally free $\struct{X}$-modules of rank $r$ and $p$
respectively. Then
$$
0 \,\longrightarrow\, \HOM[\struct{X}]{\cat{F}}{\cat{G}}\,\stackrel{\imath}{\longrightarrow}\,
\DIFF[1]{S}{\cat{F}}{\cat{G}} \,\stackrel{\sigma_1}{\longrightarrow}\,
\DER[S]{\struct{X}}{\HOM[\struct{X}]{\cat{F}}{\cat{G}}}\,\longrightarrow\, 0
$$
is an exact sequence of $\struct{X}$-modules.
\end{proposition}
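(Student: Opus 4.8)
The plan is to establish the only missing point compared to the general exact sequence \eqref{symb}, namely the surjectivity of $\sigma_1$, and to do this by a local computation combined with a gluing/patching argument. Since exactness of
$$
0 \,\longrightarrow\, \HOM[\struct{X}]{\cat{F}}{\cat{G}}\,\stackrel{\imath}{\longrightarrow}\,
\DIFF[1]{S}{\cat{F}}{\cat{G}} \,\stackrel{\sigma_1}{\longrightarrow}\,
\DER[S]{\struct{X}}{\HOM[\struct{X}]{\cat{F}}{\cat{G}}}
$$
is already available from Section \ref{Symb-op} for arbitrary $\struct{X}$-modules, the entire content here is the local surjectivity of $\sigma_1$; exactness of a sequence of sheaves can be checked on stalks, or equivalently over a cofinal system of sufficiently small open sets, so it suffices to produce, locally on $X$, a first order $S$-differential operator with prescribed symbol.

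First I would pass to an open set $U\subset X$ small enough that $\cat{F}|_U\,\cong\,\struct{X}|_U^{\oplus r}$ and $\cat{G}|_U\,\cong\,\struct{X}|_U^{\oplus p}$ are free; then $\HOM[\struct{X}]{\cat{F}}{\cat{G}}|_U$ is free of rank $rp$, and $\DER[S]{\struct{X}}{\HOM[\struct{X}]{\cat{F}}{\cat{G}}}|_U\,\cong\,\DER[S]{\struct{X}}{\struct{X}}|_U^{\oplus rp}\,\cong\,\cat{T}_{X/S}|_U^{\oplus rp}$ using the identification $\DER[S]{\struct{X}}{\struct{X}}\,\cong\,\cat{T}_{X/S}$ noted in the proof of Theorem \ref{thm:1.5}. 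Given a local section $\delta$ of $\DER[S]{\struct{X}}{\HOM[\struct{X}]{\cat{F}}{\cat{G}}}$ over $U$, I want to exhibit $P\in\Diff[1]{S}{\cat{F}|_U}{\cat{G}|_U}$ with $\sigma_1(P)\,=\,\delta$. The natural candidate is built from the canonical $S$-connection on the free module $\struct{X}|_U$ together with the trivializations: writing $\delta$ as an $rp$-tuple $(\delta_{ij})$ of $S$-derivations $\struct{X}|_U\to\struct{X}|_U$, i.e. of relative vector fields, define $P$ on a section $u\,=\,(u_1,\dots,u_r)$ of $\cat{F}|_U$ by $(Pu)_i\,=\,\sum_j \delta_{ij}(u_j)$. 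One checks directly that $P$ is $S$-linear (each $\delta_{ij}$ is) and that $[P|_V,f]\,=\,(\delta_{ij}(f))_{ij}$ acts as multiplication by the matrix of functions $(\xi_{ij}(f))$, hence is $\struct{V}$-linear; thus $P$ is a genuine first order $S$-differential operator, and by construction $\sigma_1(P)_U(f)\,=\,[P|_U,f]\,=\,\delta(f)$ under the chosen identifications, so $\sigma_1(P)\,=\,\delta$. This proves $\sigma_1$ is locally surjective, hence surjective as a map of sheaves.

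The remaining task is bookkeeping: one must verify that the locally defined operators $P$ have symbol exactly $\delta$ under the canonical isomorphisms invoked (the identification $\DER[S]{\struct{X}}{\struct{X}}\,\cong\,\cat{T}_{X/S}$ and the resulting identification of $\DER[S]{\struct{X}}{\HOM[\struct{X}]{\cat{F}}{\cat{G}}}$ with a free module), which is where properness and the submersion hypothesis enter only insofar as they guarantee $\cat{T}_{X/S}$, and hence $\DER[S]{\struct{X}}{\struct{X}}$, is locally free of rank $l$ so that the local trivializations above make sense. I would also remark that, because we only claim exactness (a local/stalkwise property), no global gluing of the $P$'s is needed — different local choices differ by a section of $\HOM[\struct{X}]{\cat{F}}{\cat{G}}$, exactly the kernel of $\sigma_1$, which is consistent with the sequence.

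The main obstacle, such as it is, is purely notational: keeping straight the chain of canonical identifications $\DER[S]{\struct{X}}{\HOM[\struct{X}]{\cat{F}}{\cat{G}}}\,\cong\,\DER[S]{\struct{X}}{\struct{X}}\otimes_{\struct{X}}\HOM[\struct{X}]{\cat{F}}{\cat{G}}$ (valid since $\HOM[\struct{X}]{\cat{F}}{\cat{G}}$ is locally free) and checking that under them the operator $P$ constructed coordinate-wise really has symbol $\delta$ rather than, say, its transpose or a sign-twisted version. Once the local model is set up correctly this is immediate, so I expect the proof to be short: cite \eqref{symb} for left-exactness and for $\Img(\imath)\,=\,\Ker(\sigma_1)$, then give the one-paragraph local construction above for surjectivity of $\sigma_1$.
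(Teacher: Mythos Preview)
Your proposal is correct and follows essentially the same route as the paper's proof: reduce to local surjectivity of $\sigma_1$, trivialize $\cat{F}$, $\cat{G}$, and the relative tangent sheaf over a small open set, and build the preimage $P$ by letting the components of $\delta$ act coordinatewise on sections (the paper phrases this dually via $\Omega^1_{X/S}$ and explicit relative coordinates $z_1,\dots,z_l$, writing $P(\sum f_j s_j)=\sum b^{\alpha}_{ij}\,\partial f_j/\partial z_\alpha\, t_i$, but the content is identical). One small correction: properness plays no role here---only the submersion hypothesis is needed, to ensure $\cat{T}_{X/S}\cong\DER[S]{\struct{X}}{\struct{X}}$ is locally free.
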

 
\begin{proof}
It is enough to show that $\sigma_1$ is surjective. Let
$\theta \,\in\, (\DER[S]{\struct{X}}{\HOM[\struct{X}]{\cat{F}}{\cat{G}}})_x$ with
$x\, \in\, X$. We have to show that there exists a first order $S$-differential operator 
$P$ defined near $x$, such that $(\sigma_1)_x(P_x)\,=\, \theta$, where
$$P_x\,\in\, (\DIFF[1]{S}{\cat{F}}{\cat{G}})_x$$ is the germ of $P$ at $x$. Let
$(U,\, \phi \,=\, (z_1,\,\cdots,\,z_l,\,z_{l+1},\,\cdots,\,z_{l+n}))$ be a holomorphic chart
on $X$ around $x$, and let $s \,=\, (s_1,\,\cdots,\,s_l)$ and $t \,=\, (t_1,\,\cdots,\,t_p)$
be holomorphic frames of $\cat{F}$ and $\cat{G}$ respectively, on $U$. We may
assume that $\theta$ is the germ at $x$ of a section $u$ of
$\DER[S]{\struct{X}}{\HOM[\struct{X}]{\cat{F}}{\cat{G}}}$ over $U$. Since
$$\HOM[\struct{X}]{\Omega^1_{X/S}}{\HOM[\struct{X}]{\cat{F}}{\cat{G}}}
\,\cong\, \DER[S]{\struct{X}}{\HOM[\struct{X}]{\cat{F}}{\cat{G}}}\, ,$$
$u$ can be considered as a section of
$\HOM[\struct{X}]{\Omega^1_{X/S}}{\HOM[\struct{X}]{\cat{F}}{\cat{G}}}$ over
$U$, that is,
$u\,:\, \Omega^1_{X/S}|_U \,\longrightarrow\, \HOM[\struct{U}]{\cat{F}|_U}{\cat{G}|_U}$ is an
$\struct{U}$-module homomorphism.
As $\{dz_\alpha\,\mid\, 1\,\leq\, \alpha \,\leq\, l\}$ is an $\struct{U}$-basis of
$\Omega^1_{X/S}|_U$, there exists a uniquely determined function $b^\alpha_{ij}\,\in\,
\struct{X}(U)$, where $1 \,\leq\, i \,\leq\, p$, $1 \,\leq\, j \,\leq\, r$ and
$1\,\leq\, \alpha \,\leq\, l$, such that 
$$
u(dz_\alpha)(s_j) \,= \,\sum^p_{i = 1} b^\alpha_{ij} t_i\, .
$$
Define $P \,:\, \cat{F}|_U \,\longrightarrow\, \cat{G}|_U$ by
$$
P(\sum^r_{j = 1}(f_j s_j) \,=\,
\sum_{i,j,\alpha} b^\alpha_{ij} {\frac{\partial f_j}{\partial z_\alpha}} t_i\, .
$$
Then $P$ is $S$-linear, because for any $V\,\subset\, \pi^{-1}(W) \bigcap U$,
where $W\, \subset\, S$ is any open subset, and any $g \,\in\, \struct{S}(W)$, we have
$$\frac{\partial g \circ \pi}{\partial z_\alpha} \,=\,
d\pi_x(\frac{\partial}{\partial z_\alpha})(g) \,=\, 0$$ for all $\alpha\,=\, 1,\,\cdots,\,l$.
The bracket operation $[P,\, f]$ is $\struct{U}$-linear. Thus $P$ is a first order $S$-differential
operator, that is, $P \,\in\, \DIFF[1]{S}{\cat{F}}{\cat{G}}(U)$.
Let $V\,\subset\, U$ and $\xi\,\in\, \Omega^1_{X/S}(V)$. Then $\xi
\,= \,\sum^l_{\alpha = 1} \xi_\alpha dz_\alpha$, where $\xi_\alpha \,\in\, \struct{U}(V)$. Then
by the construction of the symbol map, and the universal property of $(\Omega^1_{X/S},\, d_{X/S})$,
we have
\begin{equation}\label{eq:76}
\sigma_1(P)_V(\xi) \,=\, \sum_\alpha \xi_\alpha \sigma_1(P)_V (dz_\alpha)\,
= \,\sum_\alpha \xi_\alpha [P, z_\alpha]\, .
\end{equation}
{}From the definition of $P$ it follows that $P(s_j) \,=\, 0$ for all $j$, and hence
$$
[P,\, z_\alpha](s_j)\,=\, P(z_\alpha s_j)\,=\, \sum_i b^\alpha_{ij} t_i \,=\, u_V(dz_\alpha)(s_j)\, .
$$
Therefore, $[P,\, z_\alpha] \,=\, u(dz_\alpha)$, and hence \eqref{eq:76} becomes
$$
\sigma_1(P)_V(\xi)\, =\, \sum_\alpha \xi_\alpha u_V (dz_\alpha) \,=\, u_V(\xi)\,.
$$
This proves that $\sigma_1(P)_U \,=\, u $, that is, $ (\sigma_1)_x(P_x)\,=\,\theta$.
\end{proof}

Let $E$ be a locally free $\struct{X}$-module. By Proposition \ref{pro:4}, we have a
short exact sequence of \struct{X}-modules
$$
0\, \longrightarrow\, \END[\struct{X}]{E}\,\stackrel{\imath}{\longrightarrow}\, 
\DIFF[1]{S}{E}{E}\,\stackrel{\sigma_1}{\longrightarrow}\,
\DER[S]{\struct{X}}{\END[\struct{X}]{E}} \,\longrightarrow\, 0\, .
$$
For any $S$-derivation $\xi \,:\, \struct{X} \,\longrightarrow\, 
\struct{X}$, let $\widetilde{\xi}\,:\, \struct{X} \,\longrightarrow\, 
\END[\struct{X}]{E}$ be the map defined by $a\,\longmapsto\, \xi(a) \id{E}$, where
$a$ is a local sections of $\struct{X}$. Then $\widetilde{\xi}$ is an $S$-derivation.
Thus, we have an \struct{X}-module homomorphism
$$
\Psi \,:\, \DER[S]{\struct{X}}{\struct{X}} \,\longrightarrow\,
\DER[S]{\struct{X}}{\END[\struct{X}]{E}}
$$
defined by $\xi\,\longmapsto \,\widetilde{\xi}$. Note that $\Psi$ is an injective homomorphism.

Define 
$$
\cat{A}t_S(E)\,=\, \sigma^{-1}_1(\Psi(\DER[S]{\struct{X}}{\struct{X}}))\, ,
$$
which is an \struct{X}-module and for every open subset $U$ of $X$. Note that
$\cat{A}t_S(E)(U)$ consists of first order $S$-differential operator $P\,\in\,
\DIFF[1]{S}{E}{E}(U)$ such that $(\sigma_1)_{U}(P)\,=\, \Psi(\xi)$ for some
$\xi \,\in\, \DER[S]{\struct{X}}{\struct{X}}(U)$, which is equivalent to the assertion that
$ \sigma_1(P)(a)\,=\, \xi(a) \id{E}$ or 
$[P,\, a] \,=\, \xi(a) \id{E}$, for all $a\,\in\, \struct{X}(U)$ and for some $\xi \,\in\,
\DER[S]{\struct{X}}{\struct{X}}(U)$. Hence, we get a short exact sequence
\begin{equation}\label{eq:82}
0\,\longrightarrow\, \END[\struct{X}]{E}\,\stackrel{\imath}{\longrightarrow}\, \cat{A}t_S(E)
\,\stackrel{\sigma_1}{\longrightarrow}\, 
\DER[S]{\struct{X}}{\struct{X}} \,\longrightarrow\, 0
\end{equation}
of \struct{X}-modules, which is called the \textbf{Atiyah sequence} while
$\cat{A}t_S(E)$ is called the \textbf{relative Atiyah algebra} of $E$.

\begin{proposition}\label{pro:5}
Let $\pi \,:\, X \,\longrightarrow\, S$ be a holomorphic proper submersion of complex 
manifolds with connected fibers, and let $E$ be a holomorphic vector bundle over $X$. Then $E$ admits 
an holomorphic $S$-connection if and only if the Atiyah sequence 
in \eqref{eq:82} splits holomorphically.
\end{proposition}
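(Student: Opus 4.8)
The statement is the relative analogue of Atiyah's classical criterion, so the plan is to imitate the classical argument while keeping track of $S$-linearity. The key point is that a holomorphic $S$-connection on $E$, as defined in Section~\ref{Sec:S-con}, is an $\struct{X}$-module homomorphism $D\,:\,\DER[S]{\struct{X}}{\struct{X}}\,\longrightarrow\,\END[S]{E}$ satisfying the Leibniz rule, whereas a holomorphic splitting of \eqref{eq:82} is an $\struct{X}$-module homomorphism $\lambda\,:\,\DER[S]{\struct{X}}{\struct{X}}\,\longrightarrow\,\cat{A}t_S(E)$ with $\sigma_1\circ\lambda\,=\,\mathbf{1}$. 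I would show these two data are interchangeable.

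First I would recall, from the construction of $\cat{A}t_S(E)$ right after \eqref{eq:82}, that a local section of $\cat{A}t_S(E)$ over $U$ is precisely a first order $S$-differential operator $P\,\in\,\DIFF[1]{S}{E}{E}(U)$ such that $[P,\,a]\,=\,\xi(a)\,\id{E}$ for all $a\,\in\,\struct{X}(U)$ and some (necessarily unique) $\xi\,=\,\sigma_1(P)\,\in\,\DER[S]{\struct{X}}{\struct{X}}(U)$. Given a holomorphic $S$-connection $D$, for a local vector field $\xi$ I would set $\lambda(\xi)\,:=\,D_\xi\,\in\,\END[S]{E}$; the Leibniz rule for $D_\xi$ says exactly that $[D_\xi,\,a]\,=\,\xi(a)\,\id{E}$, so $D_\xi$ lies in $\cat{A}t_S(E)$ and $\sigma_1(D_\xi)\,=\,\xi$. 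Since $D$ is an $\struct{X}$-module homomorphism, so is $\lambda$; hence $\lambda$ is a holomorphic splitting of \eqref{eq:82}. Conversely, given a holomorphic splitting $\lambda$, I would set $D_\xi\,:=\,\lambda(\xi)$; then $D_\xi$ is an $S$-linear endomorphism of $E$ with $[D_\xi,\,a]\,=\,(\sigma_1\circ\lambda)(\xi)(a)\,\id{E}\,=\,\xi(a)\,\id{E}$, which is precisely the Leibniz condition, and $\struct{X}$-linearity of $\lambda$ gives $\struct{X}$-linearity of $D$. Thus $D$ is a holomorphic $S$-connection.

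The only genuinely substantive point — and the step I expect to be the main obstacle — is to make sure that the target $\END[S]{E}$ of a connection and the Atiyah algebra $\cat{A}t_S(E)\,\subset\,\DIFF[1]{S}{E}{E}$ are being compared correctly: an element of $\cat{A}t_S(E)$ is a priori only $S$-linear (not $\struct{X}$-linear), and its image under $\sigma_1$ lands in $\Psi(\DER[S]{\struct{X}}{\struct{X}})$, so one must invoke the injectivity of $\Psi$ (noted just before \eqref{eq:82}) to recover the scalar vector field $\xi$ from $\xi\,\id{E}$. Once that identification is in place, everything is a formal unwinding of definitions: the Leibniz rule in Section~\ref{Sec:S-con} is literally the defining relation $[P,a]\,=\,\xi(a)\id{E}$ of $\cat{A}t_S(E)$, and $\struct{X}$-linearity transports back and forth verbatim. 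I would also remark that the two constructions are mutually inverse, so the correspondence between holomorphic $S$-connections and holomorphic splittings of \eqref{eq:82} is in fact a bijection, which is slightly more than the stated equivalence.
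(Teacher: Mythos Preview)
Your proposal is correct and follows essentially the same approach as the paper: both directions amount to identifying the Leibniz rule for $D_\xi$ with the defining relation $[P,a]=\xi(a)\,\id{E}$ of $\cat{A}t_S(E)$, together with $\struct{X}$-linearity. The paper handles the ``$\xi'=\xi$'' identification slightly differently (using $\sigma_1\circ\nabla=\mathbf{1}$ directly rather than invoking injectivity of $\Psi$), but this is cosmetic.
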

 
\begin{proof}
Suppose that the Atiyah sequence in \eqref{eq:82} splits holomorphically, that is, there exists an
$\struct{X}$-module homomorphism 
$$
\nabla \,:\, \DER[S]{\struct{X}}{\struct{X}} \,\longrightarrow\,\cat{A}t_S(E)
$$
such that $\sigma_1 \circ \nabla\,=\,\id{\DER[S]{\struct{X}}{\struct{X}}}$.
Then, for every open subset $U\,\subset\, X$ and for every 
$\xi\,\in\, \DER[S]{\struct{X}}{\struct{X}}(U)$, this $\nabla_U(\xi)$ 
is a first order $S$-differential operator such that $\sigma_1(\nabla_U(\xi))(a)
\,=\, \xi'(a) \id{E}$ for some $\xi' \,\in\, \DER[S]{\struct{X}}{\struct{X}}(U)$
and for every $a \,\in\, \struct{X}(U)$. This implies that $\xi \,=\, \xi'$,
because the Atiyah sequence splits. We have
$[\nabla_U(\xi),\,a] \,= \,\xi(a) \id{E}$, which can be expressed as 
$$
\nabla_U(\xi)(as) \,=\, a \nabla_U(\xi)(s) + \xi(a)s$$ for every 
$s\,\in\, E(U)$. Thus $\nabla_U(\xi)$ satisfies Leibniz rule, and since
$\cat{A}t_S(E)$ is an \struct{X}-submodule of 
$\END[S]{\cat{E}}$, it follows that $\nabla$ is actually an $S$-connection on $E$.

The converse follows from the fact that any $S$-connection satisfies Leibniz rule,
because it gives an splitting of Atiyah exact sequence. 
\end{proof}

The extension class of the Atiyah exact sequence \eqref{eq:82} of a holomorphic 
vector bundle $E$ over $X$ is an element of 
$\coh{1}{X}{\HOM[\struct{X}]{\cat{T}_{X/S}}{\END[\struct{X}]{E}}}$. This extension
class is called the relative Atiyah class of $E$, and it is denoted by at$_S(E)$. 
Note that
$$
\coh{1}{X}{\HOM[\struct{X}]{\cat{T}_{X/S}}{\END[\struct{X}]{E}}}\,=\,
\coh{1}{X}{\Omega^1_{X/S}(\END[\struct{X}]{E})}\, .
$$

Proposition \ref{pro:5} has the following corollary:

\begin{corollary}\label{cor:4}
A holomorphic vector bundle $E$ on $E$ admits a holomorphic $S$-connection if and only 
if its relative Atiyah class $\at[S]{E}\, \in\,
\coh{1}{X}{\Omega^1_{X/S}(\END[\struct{X}]{E})}$ is zero.
\end{corollary}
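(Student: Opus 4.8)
The plan is to obtain this immediately from Proposition~\ref{pro:5} by translating the splitting of the Atiyah sequence into the language of sheaf cohomology. By Proposition~\ref{pro:5}, the bundle $E$ carries a holomorphic $S$-connection exactly when the Atiyah sequence \eqref{eq:82},
$$
0\,\longrightarrow\, \END[\struct{X}]{E}\,\stackrel{\imath}{\longrightarrow}\, \cat{A}t_S(E)\,\stackrel{\sigma_1}{\longrightarrow}\, \DER[S]{\struct{X}}{\struct{X}}\,\longrightarrow\, 0,
$$
admits an $\struct{X}$-linear section of $\sigma_1$. Hence it suffices to show that such a section exists if and only if the extension class of \eqref{eq:82}, which is by definition $\at[S]{E}$, is zero.

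First I would recall, as noted in the proof of Theorem~\ref{thm:1.5}, the canonical $\struct{X}$-module isomorphism $\DER[S]{\struct{X}}{\struct{X}}\,\cong\,\cat{T}_{X/S}$, and that $\cat{T}_{X/S}$ is locally free of rank $l$ by the discussion in Section~\ref{Rel-tan-cotan-bun}. Thus \eqref{eq:82} is an extension of the locally free sheaf $\cat{T}_{X/S}$ by $\END[\struct{X}]{E}$ (so in particular $\cat{A}t_S(E)$ is itself locally free), and it is classified by an element of $\mathrm{Ext}^1_{\struct{X}}(\cat{T}_{X/S},\,\END[\struct{X}]{E})$. Because $\cat{T}_{X/S}$ is locally free, the local $\mathcal{E}xt$-sheaves $\mathcal{E}xt^{i}_{\struct{X}}(\cat{T}_{X/S},\,\END[\struct{X}]{E})$ vanish for $i\geq 1$, so the local-to-global Ext spectral sequence collapses and yields
$$
\mathrm{Ext}^1_{\struct{X}}(\cat{T}_{X/S},\,\END[\struct{X}]{E})\;\cong\;\coh{1}{X}{\HOM[\struct{X}]{\cat{T}_{X/S}}{\END[\struct{X}]{E}}}\;=\;\coh{1}{X}{\Omega^1_{X/S}(\END[\struct{X}]{E})},
$$
the last identification being the one $\HOM[\struct{X}]{\cat{T}_{X/S}}{\END[\struct{X}]{E}}\,\cong\,\Omega^1_{X/S}\otimes_{\struct{X}}\END[\struct{X}]{E}$ already recorded after Proposition~\ref{pro:5}. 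This is exactly the group in which $\at[S]{E}$ is declared to lie.

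Then I would invoke the elementary fact that a short exact sequence of $\struct{X}$-modules splits if and only if its class in $\mathrm{Ext}^1$ vanishes: an $\struct{X}$-linear section $\nabla$ of $\sigma_1$ exhibits \eqref{eq:82} as isomorphic to the trivial extension, forcing the class to be zero; conversely, applying $\HOM[\struct{X}]{\cat{T}_{X/S}}{-}$ to \eqref{eq:82} gives a connecting map $\coh{0}{X}{\END[\struct{X}]{\cat{T}_{X/S}}}\,\longrightarrow\,\coh{1}{X}{\HOM[\struct{X}]{\cat{T}_{X/S}}{\END[\struct{X}]{E}}}$ sending $\id{\cat{T}_{X/S}}$ to $\at[S]{E}$, so vanishing of $\at[S]{E}$ lets $\id{\cat{T}_{X/S}}$ be lifted to a global section of $\HOM[\struct{X}]{\cat{T}_{X/S}}{\cat{A}t_S(E)}$, i.e. a splitting. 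Combined with Proposition~\ref{pro:5}, this proves the corollary. The only point needing genuine care is matching up the two descriptions of $\at[S]{E}$ — as the extension class of \eqref{eq:82} and as the above cohomology class — and checking that ``\eqref{eq:82} splits'' and ``$\at[S]{E}=0$'' correspond under the isomorphisms above; this is routine homological bookkeeping, and I expect no real obstacle, the whole argument being the relative version of Atiyah's in \cite{A}.
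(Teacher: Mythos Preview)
Your proposal is correct and matches the paper's approach exactly: the paper simply states that Corollary~\ref{cor:4} follows from Proposition~\ref{pro:5}, having just defined $\at[S]{E}$ as the extension class of \eqref{eq:82} in $\coh{1}{X}{\HOM[\struct{X}]{\cat{T}_{X/S}}{\END[\struct{X}]{E}}}\,=\,\coh{1}{X}{\Omega^1_{X/S}(\END[\struct{X}]{E})}$. You have written out in full the standard homological argument (local freeness of $\cat{T}_{X/S}$, collapse of the local-to-global Ext spectral sequence, and the splitting-iff-zero-class fact) that the paper leaves implicit.
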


\subsection{Induced family of holomorphic connections}\label{Ind}

As before, $\pi\,:\,X \,\longrightarrow\, S$ is a surjective holomorphic proper submersion
with connected fibers.
Let $\varpi\,:\,E \,\longrightarrow\, X$ be a holomorphic vector bundle. For every $t \,\in\, S$,
the restriction of $E$ to $X_t \,=\, \pi^{-1}(t)$ is denoted by $E_t$.
Let $U$ be an open subset of $X$ and $s\, :\,U \,\longrightarrow\, E$ a holomorphic section.
We denote by $r_t(s)$ the restriction of $s$ to $X_t\cap U$, whenever $U \cap
X_t\,\neq\, \emptyset$. Clearly, $r_t(s)$ is a holomorphic section of $E_t$ over
$U \cap X_t$. The map $r_t \,:\, s\,\longmapsto\, r_t(s)$ induces, therefore, a 
homomorphism of $\C$-vector spaces from $E$ to $E_t$, which is denoted
by the same symbol $r_t$ (the restriction map $r_t$ is discussed in\cite[p.~343]{KS1}
and \cite[p.~58]{KS2}). Also, $X_t$ is a complex 
submanifold of $X$, so $\struct{X}|_{X_t}\,=\, \struct{X_t}$. We also have
the restriction map $r_t\,:\,\END[\struct{X}]{E}\,\longrightarrow\, \END[\struct{X_t}]{E_t}$.

Similarly, if $P\,:\, E\,\longrightarrow\, F$ is a first order $S$-differential operator,
where $F$ is a holomorphic vector bundle over $X$, then
the restriction map $r_t\,:\,E_t \,\longrightarrow\, F_t$ gives rise to a first order 
differential operator $P_t\,:\, E_t\,\longrightarrow\, F_t$ for every $t\, \in\, S$. Thus,
we have the restriction map $r_t\,:\,\DIFF[1]{S}{E}{F}\,\longrightarrow\, \DIFF[1]{\C}{E_t}{F_t}$.
In particular, for $E\,=\, F$, we have the restriction map 
$r_t\,:\,\DIFF[1]{S}{E}{E} \,\longrightarrow\, \DIFF[1]{\C}{E_t}{E_t}$ for every $t\,\in \,S$.
Since, the restriction of the relative tangent bundle $T(X/S)$ to each
fiber $X_t$ of $\pi$ is the tangent bundle $T(X_t)$ of the fiber
$X_t$, we have the restriction map $r_t\,:\,\cat{T}_{X/S}\,\longrightarrow\, \cat{T}_{X_t}$. 
 
Now, for every $t\,\in\, S$, the restriction maps gives a commutative diagram
\begin{equation}
\label{eq:cd2}
\xymatrix{
0 \ar[r] & \END[\struct{X}]{E} \ar[d]^{r_t} \ar[r] & \cat{A}t_S(E) 
\ar[d]^{r_t} \ar[r]^{\sigma_1} & \cat{T}_{X/S} \ar[d]^{r_t} \ar[r] & 0 \\
0 \ar[r] & \END[\struct{X_t}]{E_t} \ar[r] & \cat{A}t(E_t) \ar[r]^{\sigma_{1t}} & \cat{T}_{X_t} \ar[r] & 0 
}
\end{equation}
where the bottom sequence is the Atiyah sequence of the holomorphic vector
bundle $E_t$ over $X_t$ (see \eqref{eq:82}) and $\sigma_{1t}$ is the restriction
of the symbol map $\sigma_1$.

Suppose that $E$ admits a holomorphic $S$-connection, which is 
equivalent to saying that the relative Atiyah sequence in \eqref{eq:82} splits 
holomorphically. If $\nabla\,:\,\cat{T}_{X/S}\,\longrightarrow\,\cat{A}t_S(E)$ is 
a holomorphic splitting of the relative Atiyah sequence in \eqref{eq:82},
then for every $t \,\in\, S$, the restriction of
$\nabla$ to $\cat{T}_{X_t}$ gives an $\struct{X_t}$-module homomorphism
$\nabla_t\,:\, \cat{T}_{X_t} \,\longrightarrow\, \cat{A}t(E_t)$. Now, $\nabla_t$ is a 
holomorphic splitting of the Atiyah sequence of the holomorphic vector 
bundle $E_t$, which follows from the fact that the restriction maps $r_t$
defined above are surjective. Thus, we have the following:
 
\begin{proposition}\label{pro:3J}
Let $\pi\,:\, X \,\longrightarrow\, S$ be a surjective holomorphic proper
submersion with connected fibers and $\varpi\,:\, E \,\longrightarrow\, X$ a 
holomorphic vector bundle. Let $D$ be a holomorphic $S$-connection 
on $E$. Then for every $t \,\in\, S$, we have a holomorphic connection $D_t$
on the holomorphic vector bundle $E_t\,\longrightarrow\, X_t$. In other words, we have a
family $\{D_t\,\mid\, t \in S\}$ of holomorphic connections on the holomorphic 
family of vector bundles $\{E_t\,\longrightarrow\, X_t\,\mid\, t\in S\}$.
\end{proposition}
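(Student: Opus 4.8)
The plan is to deduce this from Proposition~\ref{pro:5} by restricting a holomorphic splitting of the relative Atiyah sequence to each fiber, using the commutative diagram~\eqref{eq:cd2}.

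First, by Proposition~\ref{pro:5}, the hypothesis that $D$ is a holomorphic $S$-connection on $E$ is equivalent to the relative Atiyah sequence~\eqref{eq:82} admitting a holomorphic splitting. Under the canonical isomorphism $\DER[S]{\struct{X}}{\struct{X}}\,\cong\,\cat{T}_{X/S}$ of $\struct{X}$-modules, this splitting is an $\struct{X}$-module homomorphism $\nabla\,:\,\cat{T}_{X/S}\,\longrightarrow\,\cat{A}t_S(E)$ with $\sigma_1\circ\nabla\,=\,\id{\cat{T}_{X/S}}$, and it is the one corresponding to $D$.

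Next, fix $t\,\in\, S$. The key observation is that restriction to the fiber $X_t$ carries the relative Atiyah sequence of $E$ onto the ordinary Atiyah sequence of $E_t\,\longrightarrow\, X_t$; this is precisely the commutative diagram~\eqref{eq:cd2}. Its left vertical arrow is the restriction $\END[\struct{X}]{E}\,\longrightarrow\,\END[\struct{X_t}]{E_t}$ arising from $\struct{X}|_{X_t}\,=\,\struct{X_t}$, its right vertical arrow $r_t\,:\,\cat{T}_{X/S}\,\longrightarrow\,\cat{T}_{X_t}$ comes from the identification $\cat{T}_{X/S}|_{X_t}\,=\,\cat{T}_{X_t}$ of the relative tangent bundle restricted to $X_t$ with the tangent bundle of $X_t$, and its middle vertical arrow is the restriction of first order $S$-differential operators to $X_t$; all three are surjective (this is where properness of $\pi$ enters, cf.\ \cite{KS1}). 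I would then compose $\nabla$ with $r_t$: commutativity of the right-hand square shows that $r_t\circ\nabla$ factors through the surjection $r_t\,:\,\cat{T}_{X/S}\,\longrightarrow\,\cat{T}_{X_t}$, producing an $\struct{X_t}$-module homomorphism $\nabla_t\,:\,\cat{T}_{X_t}\,\longrightarrow\,\cat{A}t(E_t)$, and the relation $\sigma_1\circ\nabla\,=\,\id{\cat{T}_{X/S}}$ together with the compatibility of $\sigma_1$ with the fiberwise symbol $\sigma_{1t}$ forces $\sigma_{1t}\circ\nabla_t\,=\,\id{\cat{T}_{X_t}}$. Hence $\nabla_t$ is a holomorphic splitting of the bottom row of~\eqref{eq:cd2}.

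Finally, I would apply Proposition~\ref{pro:5} to the proper submersion $X_t\,\longrightarrow\,\{t\}$ --- whose unique fiber $X_t$ is connected, so the hypotheses are satisfied, and for which a holomorphic $S$-connection is nothing but an ordinary holomorphic connection and $\cat{A}t_S(\,\cdot\,)$ is the ordinary Atiyah algebra --- to conclude that the splitting $\nabla_t$ yields a holomorphic connection $D_t$ on $E_t\,\longrightarrow\, X_t$. Letting $t$ range over $S$ produces the asserted family $\{D_t\,\mid\, t\,\in\, S\}$. I expect the main obstacle to be the middle step: checking that restricting a first order $S$-differential operator to a fiber gives a first order differential operator with the expected symbol, so that $r_t\circ\nabla$ genuinely descends along $r_t\,:\,\cat{T}_{X/S}\,\longrightarrow\,\cat{T}_{X_t}$ and lands in the subsheaf $\cat{A}t(E_t)\,\subset\,\DIFF[1]{\C}{E_t}{E_t}$; this is a diagram chase in~\eqref{eq:cd2} relying on the surjectivity of the three vertical restriction maps, which must be handled with care.
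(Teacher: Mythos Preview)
Your proposal is correct and follows essentially the same approach as the paper: both translate the $S$-connection $D$ into a holomorphic splitting $\nabla$ of the relative Atiyah sequence via Proposition~\ref{pro:5}, then restrict $\nabla$ fiberwise using the commutative diagram~\eqref{eq:cd2} and the surjectivity of the restriction maps $r_t$ to obtain a splitting $\nabla_t$ of the Atiyah sequence of $E_t$, hence a holomorphic connection $D_t$. Your write-up is in fact more detailed than the paper's brief argument; the only minor quibble is that the surjectivity of the $r_t$ is a local statement about restricting holomorphic sections to the closed submanifold $X_t$ and does not actually require properness of $\pi$.
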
 

\subsection{Smooth relative tangent bundle and smooth relative r-forms}\label{Smooth-rel-form}

As before, $\pi\,:\, X\,\longrightarrow\, S$ is a complex analytic family of
complex manifolds. Consider $\pi$ as a $C^\infty$ map between real manifolds.
We denote the smooth relative tangent bundle by $T^{\R}(X/S)$, while its
sheaf of smooth sections is denoted by $\cat{T}^{\R}_{X/S}$.
Similarly, there is a smooth relative cotangent bundle denoted by $A^1_{\R}(X/S)$ and
its sheaf of smooth sections, which is denoted by $\cat{A}^1_{\R}(X/S)$. Define
$$ T^{*}(X/S)_\C \,= \, A^1_{\R}(X/S) \otimes_\R \C \,=\, A^1_{\R}(X/S)_\C\, , $$
which is nothing but the complexification of the smooth relative 
cotangent bundle $A^1_{\R}(X/S)$.

A smooth section of $A^1_{\R}(X/S)_\C$ is called a \emph{complex valued smooth $1$-form on $X$ 
relative to $S$}, or a \emph{complex valued smooth relative $1$-form on $X$ over $S$}. We denote 
the sheaf of smooth sections of $A^1_{\R}(X/S)_\C$ by $\cat{A}^1_{X/S}$; also, denote the sheaf 
of complex valued smooth function on $X$ by $\cat{C}^\infty_{X}$. Then $\cat{A}^1_{X/S}$ is an 
$\cat{C}^\infty_{X}$-module, and there exists a unique $S$-derivation 
$d_{X/S}\,:\,\cat{C}^\infty_{X}\,\longrightarrow\, \cat{A}^1_{X/S}$. The kernel $\SKer{d_{X/S}}$ of $d_{X/S}$ is 
the sheaf of complex valued smooth functions on $X$, which are constant along the fibers $X_t$, 
for all $t \in S$, that is, $\SKer{d_{X/S}} \,=\, \pi^{-1}{\cat{C}^\infty_{S}}$, where 
${\cat{C}^\infty_{S}}$ is the sheaf of complex valued smooth functions on $S$.

Similarly, we can define the complex valued smooth relative $r$-forms on $X$ over $S$.
A smooth section of $\Lambda^rT^{*}(X/S)_\C$ is called a \emph{complex
valued smooth relative $r$-form} on $X$ over $S$. Denote the sheaf of
smooth sections of $\Lambda^rT^{*}(X/S)_\C$ by $\cat{A}^r_{X/S}$. The following
analog Theorem \ref{thm:1.5} is derived using Proposition \ref{pro:2} again.

\begin{theorem}\label{thm:2}
There exist canonical $S$-linear maps 
$\delta^r_{X/S}\,:\, \cat{A}^r_{X/S}\,\longrightarrow\, \cat{A}^{r+1}_{X/S}$ called relative exterior 
derivative satisfying the following:
\begin{enumerate}
\item \label{33} $\delta^0_{X/S} \,=\, d_{X/S}\,:\, \cat{C}^\infty_{X} \cat{A}^1_{X/S}$,

\item \label{34} $\delta^{r+1}_{X/S} \circ \delta^r_{X/S} \,=\, 0$, and

\item \label{35} $\delta_{X/S}(\alpha \wedge \beta)\,=\, \delta_{X/S} \alpha
\wedge \beta + (-1)^{r} \alpha \wedge \delta_{X/S}\beta$ for all local
sections $\alpha$ of $\cat{A}^r_{X/S}$ and $\beta$ of $\cat{A}^s_{X/S}$. 
\end{enumerate}
 \end{theorem}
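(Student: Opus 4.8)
The plan is to mimic the construction of the holomorphic relative exterior derivative in Theorem \ref{thm:1.5}, but now in the smooth category, and to deduce everything from the general machinery of Section \ref{Sec:Cov-Der}. The first step is to identify the relevant sheaves. I would observe that the sheaf $\DER[S]{\cat{C}^\infty_X}{\cat{C}^\infty_X}$ of smooth $S$-derivations is canonically isomorphic, as a $\cat{C}^\infty_X$-module, to the sheaf $\cat{T}^{\R}_{X/S}$ of smooth sections of the relative tangent bundle $T^{\R}(X/S)$ (a derivation along the fibers is the same as a relative vector field); consequently the sheaf $\cat{A}lt^r_{\cat{C}^\infty_X}(\DER[S]{\cat{C}^\infty_X}{\cat{C}^\infty_X},\, \cat{C}^\infty_X)$ of alternating $r$-forms on this derivation sheaf is canonically isomorphic to $\cat{A}^r_{X/S}$, the sheaf of complex valued smooth relative $r$-forms. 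Here one uses that $T^{*}(X/S)_\C = A^1_\R(X/S)_\C$ is the complexification of the real relative cotangent bundle, so that alternating multilinear forms into $\cat{C}^\infty_X$ recover exactly the complex valued relative $r$-forms.

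The second step is to invoke the abstract results with $\cat{F} = \cat{C}^\infty_X$ equipped with its canonical $S$-connection $\epsilon$ (the inclusion $\DER[S]{\cat{C}^\infty_X}{\cat{C}^\infty_X} \hookrightarrow \END[S]{\cat{C}^\infty_X}$; see Remark \ref{Rem:2}). By Proposition \ref{pro:2} there is a unique family of $S$-linear maps
$$
d = d_r : \cat{A}lt^r_{\cat{C}^\infty_X}(\DER[S]{\cat{C}^\infty_X}{\cat{C}^\infty_X},\, \cat{C}^\infty_X) \,\longrightarrow\, \cat{A}lt^{r+1}_{\cat{C}^\infty_X}(\DER[S]{\cat{C}^\infty_X}{\cat{C}^\infty_X},\, \cat{C}^\infty_X)
$$
satisfying $\theta_\xi = d \circ \imath_\xi + \imath_\xi \circ d$ for all local sections $\xi$ of $\DER[S]{\cat{C}^\infty_X}{\cat{C}^\infty_X}$. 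Transporting $d$ along the isomorphisms from the first step produces the desired operators $\delta^r_{X/S} : \cat{A}^r_{X/S} \to \cat{A}^{r+1}_{X/S}$. Part \eqref{33} is then immediate: in degree $0$ the covariant derivative associated with a connection is just the connection itself (Corollary \ref{cor:2}), and for the canonical connection $\epsilon$ on $\cat{C}^\infty_X$ this is precisely the universal $S$-derivation $d_{X/S} : \cat{C}^\infty_X \to \cat{A}^1_{X/S}$ guaranteed in Section \ref{Smooth-rel-form}. Part \eqref{35} follows from Proposition \ref{pro:3} (covariant derivative and exterior product), applied to the compatibility of the canonical connection with the multiplication $\cat{C}^\infty_X \times \cat{C}^\infty_X \to \cat{C}^\infty_X$. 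Part \eqref{34}, that $\delta^{r+1}_{X/S} \circ \delta^r_{X/S} = 0$, follows from Corollary \ref{cor:2}\eqref{27} together with the fact that the canonical connection $\epsilon$ is flat: its curvature form $K(\xi,\eta) = \epsilon_\xi \circ \epsilon_\eta - \epsilon_\eta \circ \epsilon_\xi - \epsilon_{[\xi,\eta]}$ vanishes identically since the bracket on $\DER[S]{\cat{C}^\infty_X}{\cat{C}^\infty_X}$ is exactly the commutator, and a flat connection has $d^2 = 0$.

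I expect the main obstacle to be not any single computation but rather the verification that the abstract apparatus of Section \ref{sec:S-Conn}, which was set up for an arbitrary morphism of ringed spaces, applies verbatim to the ringed space $(X,\, \cat{C}^\infty_X)$ over $(S,\, \cat{C}^\infty_S)$ — in particular that $\DER[S]{\cat{C}^\infty_X}{\cat{C}^\infty_X} \cong \cat{T}^{\R}_{X/S}$ and hence is locally free of rank $l = m - n$, so that the alternating-form sheaves really do coincide with $\cat{A}^r_{X/S}$. Once that identification is in place, every assertion of the theorem is a translation of a general statement already proved in the excerpt, so the proof is essentially formal; this is why the theorem is stated as an ``analog'' of Theorem \ref{thm:1.5} and flagged as following from Proposition \ref{pro:2} ``again''.
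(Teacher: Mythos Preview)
Your proposal is correct and follows essentially the same route as the paper's own proof: identify $\DER[S]{\cat{C}^\infty_X}{\cat{C}^\infty_X}$ with $\cat{T}^{\R}_{X/S}$ so that $\cat{A}lt^r \cong \cat{A}^r_{X/S}$, then apply Proposition~\ref{pro:2} to the canonical $S$-connection on $\cat{C}^\infty_X$, deduce \eqref{35} from Proposition~\ref{pro:3}, and deduce \eqref{34} from Corollary~\ref{cor:2}\eqref{27}. Your exposition is simply more detailed (in particular, you make explicit the flatness of the canonical connection behind the vanishing of $d^2$).
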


\begin{proof}
First note that the sheaf 
$\DER[S]{\cat{C}^\infty_{X}}{\cat{C}^\infty_{X}}$ is
canonically isomorphic to the relative tangent sheaf
$\cat{T}^\R_{X/S}$ as an $\cat{C}^\infty_{X}$-module, and hence 
$\cat{A}lt^r_{\cat{C}^\infty_{X}}(\DER[S]{\cat{C}^\infty_{X}}{\cat{C
}^\infty_{X}},\,\cat{C}^\infty_{X})$ 
is canonically isomorphic to the sheaf $\cat{A}^r_{X/S}$ as an 
$\cat{C}^\infty_{X}$-module. Considering the canonical $S$-connection in 
$\cat{C}^\infty_{X}$, by Proposition \ref{pro:2}, there is a canonical
$S$-linear map that satisfies \eqref{33}, and by Proposition
\ref{pro:3}, it satisfies \eqref{35}. Finally, \eqref{34} follows using
By Corollary \ref{cor:2}\eqref{27}.
\end{proof}

Henceforth, we shall denote $\delta^r_{X/S}$ by $d_{X/S}$, for all $r \,\geq\, 0$.
By the relative Poincar\'e lemma, there is an exact sequence
$$
0\,\longrightarrow\,\pi^{-1}\cat{C}^\infty_{S}\,\longrightarrow\,\cat{C}^\infty_{X} \xrightarrow{d_{X/S}}
\cat{A}^1_{X/S} \xrightarrow{d_{X/S}} \cdots \xrightarrow{d_{X/S}}
\cat{A}^{2l}_{X/S} \,\longrightarrow\, 0
$$
of $\cat{C}^\infty_{X}$-module and $S$-linear maps. Thus we have a smooth relative de Rham complex 
$(\cat{A}^\bullet_{X/S},\, d_{X/S})$, which is a resolution of the sheaf $\pi^{-1}\cat{C}^\infty_{S}$.
 
For every integer $p \,\geq\, 0$ and for every open subset 
$V\,\subset\, S$, the assignment
$$V\,\longmapsto\, \hcoh{p}{\pi^{-1}(V)}{\cat{A}^\bullet_{X/S}|_{\pi^{-1}(V)}}$$ is a 
presheaf of $\pi_\ast \cat{C}^\infty_{X} (V) \,=\,\cat{C}^\infty_{X}(\pi^{-1}(V))$-module, where 
$\hcoh{q}{\pi^{-1}(V)}{\cat{A}^\bullet_{X/S}|_{\pi^{-1}(V)}}$ denotes the
hypercohomolgy group of $\pi^{-1}(V)\,\subset\, X$ with values in 
$\cat{A}^\bullet_{X/S}|_{\pi^{-1}(V)}$. The sheafification of this presheaf is a
$\cat{C}^\infty_{S}$-module, and it is denoted by $ \mathbb{R}^p\pi_\ast(\cat{A}^\bullet_{X/S})$.

The sheaf 
$\mathbb{R}^p\pi_\ast(\cat{A}^\bullet_{X/S})$ of $\cat{C}^\infty_{S}$-module is 
called the sheaf of relative de Rham cohomology, and it is denoted by 
$\drcoh{p}{dR}{X/S}$. Since, $\cat{A}^\bullet_{X/S}$ is an acyclic resolution of 
$\pi^{-1}\cat{C}^\infty_{S}$, we have the following:
 
\begin{proposition}
Let $\pi\,:\, X \,\longrightarrow\, S$ be a holomorphic proper submersion of complex manifolds
with connected fibers. Then $$\drcoh{p}{dR}{X/S} \,\cong\, R^p 
\pi_\ast(\pi^{-1}\cat{C}^\infty_{S})\, ,$$
where $R^p \pi_\ast(\pi^{-1}\cat{C}^\infty_{S})$ is the higher direct 
image sheaf of $\pi^{-1} \cat{C}^\infty_{S}$ on $S$.
\end{proposition}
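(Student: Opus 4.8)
The plan is to deduce the isomorphism from the general principle that a resolution by sheaves which are acyclic for the relevant functor computes that functor's derived sheaves, applied here to the relative de Rham complex. Recall that the relative Poincar\'e lemma recorded just above exhibits $\cat{A}^\bullet_{X/S}$ as a resolution of $\pi^{-1}\cat{C}^\infty_S$ on $X$. Each term $\cat{A}^r_{X/S}$ is a sheaf of modules over $\cat{C}^\infty_X$, and since $X$ is a paracompact manifold, $\cat{C}^\infty_X$ admits partitions of unity subordinate to every open cover; hence each $\cat{A}^r_{X/S}$ is a fine sheaf, and so is its restriction to any open subset of $X$. Consequently, for every open $U\,\subset\, X$ one has $\coh{q}{U}{\cat{A}^r_{X/S}|_U}\,=\,0$ for all $q\,\geq\, 1$, and in particular each $\cat{A}^r_{X/S}$ is $\pi_\ast$-acyclic.

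Next I would work one base-open-set at a time. Fix an open $V\,\subset\, S$ and set $U\,=\,\pi^{-1}(V)$, which is an open, hence paracompact, subset of $X$. The hypercohomology spectral sequence
$$
E_1^{r,q}\,=\,\coh{q}{U}{\cat{A}^r_{X/S}|_U}\,\Longrightarrow\,\hcoh{r+q}{U}{\cat{A}^\bullet_{X/S}|_U}
$$
is concentrated in the row $q\,=\,0$ by the acyclicity just noted, so it degenerates and yields a canonical isomorphism $\hcoh{p}{U}{\cat{A}^\bullet_{X/S}|_U}\,\cong\,\coh{p}{U}{\pi^{-1}\cat{C}^\infty_S|_U}$ for all $p$ (equivalently: the fine resolution $\cat{A}^\bullet_{X/S}$ computes the sheaf cohomology of $\pi^{-1}\cat{C}^\infty_S$). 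These isomorphisms are natural in $V$, since both the augmentation $\pi^{-1}\cat{C}^\infty_S\,\hookrightarrow\,\cat{A}^\bullet_{X/S}$ and the spectral sequence are. Now sheafify over $S$: the presheaf $V\,\longmapsto\,\hcoh{p}{\pi^{-1}(V)}{\cat{A}^\bullet_{X/S}|_{\pi^{-1}(V)}}$ sheafifies, by the definition given above, to $\drcoh{p}{dR}{X/S}$, while $V\,\longmapsto\,\coh{p}{\pi^{-1}(V)}{\pi^{-1}\cat{C}^\infty_S|_{\pi^{-1}(V)}}$ sheafifies, by the definition of the higher direct image, to $R^p\pi_\ast(\pi^{-1}\cat{C}^\infty_S)$; hence the natural isomorphism of presheaves descends to the asserted isomorphism $\drcoh{p}{dR}{X/S}\,\cong\,R^p\pi_\ast(\pi^{-1}\cat{C}^\infty_S)$. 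I would note that this is compatible with the $\cat{C}^\infty_S$-module structures, since $\pi^{-1}\cat{C}^\infty_S$ is a sheaf of rings and the pushforwards in question are naturally $\cat{C}^\infty_S$-modules. (Equivalently, one may phrase everything in the derived category: $\mathbb{R}\pi_\ast(\pi^{-1}\cat{C}^\infty_S)\,\cong\,\mathbb{R}\pi_\ast(\cat{A}^\bullet_{X/S})\,\cong\,\pi_\ast\cat{A}^\bullet_{X/S}$, the last step because each $\cat{A}^r_{X/S}$ is $\pi_\ast$-acyclic, and then take the $p$-th cohomology sheaf.)

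There is really no serious obstacle here; the single substantive input is the acyclicity statement, namely that $\cat{C}^\infty_X$-modules on a paracompact manifold are fine and that fine sheaves on a paracompact space are acyclic for the global-section functor (so that passing to each $\pi^{-1}(V)$ preserves this property). Granting that standard fact from sheaf theory on manifolds — which the discussion preceding the proposition already takes for granted — the remaining steps are purely formal: the degeneration of the hypercohomology spectral sequence, the naturality in the base variable, and the passage to sheafifications.
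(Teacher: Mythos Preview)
Your proof is correct and follows essentially the same approach as the paper: both use that each $\cat{A}^r_{X/S}$ is fine (hence acyclic on every $\pi^{-1}(V)$), deduce that hypercohomology of the relative de Rham complex over $\pi^{-1}(V)$ agrees with sheaf cohomology of $\pi^{-1}\cat{C}^\infty_S$ there, and then sheafify over $S$. The only cosmetic difference is that you spell out the degeneration via the hypercohomology spectral sequence, while the paper simply invokes the definition of hypercohomology for a complex of acyclic sheaves.
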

 
\begin{proof}
Since, for each $p \,\geq\, 0$, the sheaf $\cat{A}^{p}_{X/S}$ is fine, from the definition of
hypercohomology, we have 
$$\hcoh{p}{\pi^{-1}(V)}{\cat{A}^\bullet_{X/S}|_{\pi^{-1}(V)}} \,\cong\, 
\gcoh{p}{\Gamma(\pi^{-1}(V),\cat{A}^\bullet_{X/S}|_{\pi^{-1}(V)})}$$
for every open subset $V \,\subset\, S$. 
Also, $\cat{A}^\bullet_{X/S}$ is an acyclic resolution of 
$\pi^{-1}\cat{C}^\infty_{S}$. Thus, we have
$$\coh{p}{\pi^{-1}(V)}{\pi^{-1}\cat{C}^\infty_{S}} \,\cong\, 
\gcoh{p}{\Gamma(\pi^{-1}(V),\cat{A}^\bullet_{X/S}|_{\pi^{-1}(V)} )}$$ for every
open subset $V \,\subset\, S$. 
Now, the proposition follows from the definition of 
higher direct image sheaves.
\end{proof}
 
Note that $\drcoh{p}{dR}{X/S}$ is locally free $\cat{C}^\infty_{S}$-module.
 
\begin{proposition}[{Pullback of smooth relative forms}]\label{prop:3.A.1} 
Suppose we have the following commutative diagram
 \begin{equation} 
\label{eq:3.1}
\xymatrix{Y \ar[d]^{\pi'} \ar[r]^f & X \ar[d]^\pi \\
T \ar[r]^g & S \\}
\end{equation} 
of complex manifolds and holomorphic maps, where $\pi$, and $\pi'$ are 
surjective holomorphic proper submersions. Then, for every open subset 
$U \,\subset\, X$, and every smooth relative differential form $\omega\,\in\,
\cat{A}^r_{X/S}(U)$, the pullback $\widetilde{f^*}(\omega)$ is an element 
of $ \cat{A}^r_{Y/T}(f^{-1}(U))$.
\end{proposition}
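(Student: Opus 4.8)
The plan is to define $\widetilde{f^*}$ as the descent of the ordinary pullback of complex-valued smooth forms along $f$, and then to check that this descent is well defined. Recall that for the holomorphic proper submersion $\pi\,:\,X\,\longrightarrow\,S$ the complexified relative cotangent bundle $T^*(X/S)_\C$ is the quotient of $T^*X\otimes_\R\C$ by the image of $\pi^*(T^*S\otimes_\R\C)$; taking $r$-th exterior powers yields a canonical surjection of $\cat{C}^\infty_X$-modules $q_X\,:\,\cat{A}^r_X\,\longrightarrow\,\cat{A}^r_{X/S}$ whose kernel is the image of $\pi^*\cat{A}^1_S\otimes_{\cat{C}^\infty_X}\cat{A}^{r-1}_X$ under the wedge map, where $\cat{A}^r_X$ and $\cat{A}^1_S$ denote the sheaves of complex-valued smooth forms on $X$ and on $S$. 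This is the smooth, complexified analogue of the exact sequence \eqref{eq:ses3}. The same discussion applies to $\pi'\,:\,Y\,\longrightarrow\,T$, giving $q_Y\,:\,\cat{A}^r_Y\,\longrightarrow\,\cat{A}^r_{Y/T}$.

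Given $U\subset X$ open and $\omega\in\cat{A}^r_{X/S}(U)$, I would choose, locally on $U$, a smooth form $\widetilde{\omega}\in\cat{A}^r_X$ lifting $\omega$ (so $q_X(\widetilde{\omega})=\omega$), take the usual pullback $f^*\widetilde{\omega}\in\cat{A}^r_Y$ over the corresponding piece of $f^{-1}(U)$, and declare $\widetilde{f^*}(\omega):=q_Y(f^*\widetilde{\omega})$ there. The only thing to verify is independence of the lift, and for this the commutativity of \eqref{eq:3.1}, namely $\pi\circ f=g\circ\pi'$, is exactly what is needed: it gives $f^*\circ\pi^*=\pi'^*\circ g^*$ on forms, so for local sections $\alpha$ of $\cat{A}^1_S$ and $\beta$ of $\cat{A}^{r-1}_X$ one has
$$
f^*(\pi^*\alpha\wedge\beta)=(\pi'^*g^*\alpha)\wedge f^*\beta,
$$
and the right-hand side lies in $\Ker{q_Y}$ because $g^*\alpha$ is a $1$-form on $T$. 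Hence $f^*(\Ker{q_X})\subset\Ker{q_Y}$, so $q_Y(f^*\widetilde{\omega})$ is independent of the chosen lift; the local pieces then agree on overlaps and glue to a well-defined section $\widetilde{f^*}(\omega)\in\cat{A}^r_{Y/T}(f^{-1}(U))$. Compatibility with the restriction maps of $\cat{A}^r_{X/S}$ and $\cat{A}^r_{Y/T}$ follows from the corresponding property of the ordinary pullback, so $\widetilde{f^*}$ is in fact a morphism of sheaves.

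An equivalent, slightly more conceptual route is to observe that $d\pi\circ df=dg\circ d\pi'$ forces the differential $df$ to carry the relative tangent bundle $T(Y/T)=\Ker{d\pi'}$ into $f^*T(X/S)$; dualizing and complexifying gives a bundle map $f^*T^*(X/S)_\C\to T^*(Y/T)_\C$, and applying $\Lambda^r$ together with the usual pullback of sections recovers the same $\widetilde{f^*}$. Either way there is no genuine obstacle here; the only point deserving care is the identification of $\cat{A}^r_{X/S}$ as the quotient of $\cat{A}^r_X$ by the subsheaf generated by $\pi^*\cat{A}^1_S$ (the smooth, complexified counterpart of \eqref{eq:ses3}), after which everything is local and sheafifies. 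I would therefore expect the proof to be short, with this bookkeeping being its only substantive content.
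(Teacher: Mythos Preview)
Your proof is correct and follows essentially the same approach as the paper: both arguments use the short exact sequences expressing $\cat{A}^r_{X/S}$ and $\cat{A}^r_{Y/T}$ as quotients of the absolute form sheaves (the paper records these as commutative diagrams \eqref{eq:3.2} and \eqref{eq:3.3}), and the key input is that $\pi\circ f=g\circ\pi'$ forces $f^*(\Ker{q_X})\subset\Ker{q_Y}$. Your write-up is in fact more explicit than the paper's, which simply displays the diagrams and asserts the conclusion.
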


\begin{proof}
 Given the commutative diagram in \eqref{eq:3.1}, we have the following 
 commutative diagrams:
 \begin{equation}
 \label{eq:3.2}
 \xymatrix{ 0 \ar[r] & \pi^*\cat{A}_S^1 \ar[d]^{\pi*g*} \ar[r] & 
 \cat{A}^1_{X} \ar[d]^{f^*} \ar[r] & \cat{A}^1_{X/S} \ar[d]^{\widetilde{f^*}} 
 \ar[r] & 0 \\
 0 \ar[r] & \pi^*\cat{A}_T^1 \ar[r] & \cat{A}_Y^1 \ar[r] & \cat{A}_{Y/T}^1 
 \ar[r] & 0 \\}
 \end{equation}
 
 \begin{equation}
 \label{eq:3.3}
 \xymatrix{ 0 \ar[r] & \pi^*\cat{A}_S^1 \otimes_{\cat{C}^\infty_{X,\C}} 
 \cat{A}_X^{r-1} \ar[d]^{\pi*g*} \ar[r] & 
 \cat{A}^r_{X} \ar[d]^{f^*} \ar[r] & \cat{A}^r_{X/S} \ar[d]^{\widetilde{f^*}} 
 \ar[r] & 0 \\
 0 \ar[r] & \pi^*\cat{A}_T^1 \otimes_{\cat{C}^\infty_{Y,\C}} \cat{A}^{r-1}_Y 
 \ar[r] & \cat{A}_Y^r \ar[r] & \cat{A}_{Y/T}^r \ar[r] & 0 \\}
 \end{equation}
Thus, given any $\omega \,\in\, \cat{A}^1_{X/S}(U)$, from \eqref{eq:3.2},
we get that $\widetilde{f^*}(\omega)$ is an element of $\cat{A}^1_{Y/T}(f^{-1}(U))$,
and similarly, from \eqref{eq:3.3} it follows that for any smooth relative $r$-form
$\omega \,\in\, \cat{A}^r_{X/S}(U)$, we have $\widetilde{f^*}(\omega) 
\,\in\, \cat{A}^r_{Y/T}(f^{-1}(U))$. 
\end{proof}
 
\subsection{Smooth relative connection and relative Chern class}\label{rel-chern}

In this section, we define the relative Chern class of a differentiable
family of complex vector bundles $\varpi\,:\,E \,\longrightarrow\, X$ of rank $r$. For each
$t\, \in\, S$, the restriction of $E$ to $X_t\,=\, \pi^{-1}(t)$ will be denoted by $E_t$.

We follow Section \ref{cc-mt} and substitute $E$ in place of $\cat{F}$ there.
Let $D$ be a smooth $S$-connection on $E$. Let $(U_\alpha,\,h_\alpha)$ be a trivialization
of $E$ over $U_\alpha\,\subset\, X$.
Let $R$ be the $S$-curvature form for $D$, and let $\Omega_\alpha\,=\, (\Omega_{ij\alpha})$ 
be the curvature matrix of $D$ over $U_\alpha$, 
as defined in Section \ref{cc-mt}, so
$\Omega_{ij\alpha}\,\in\,\cat{A}^2_{X/S}(U_\alpha)$. We have 
$\Omega_\beta\,=\, g_{\alpha \beta}^{-1} \Omega_{\alpha} g_{\alpha \beta}$, 
where $g_{\alpha \beta}\,:\, U_{\alpha} \cap U_{\beta}\,\longrightarrow\, {\rm GL}_r(\C)$ is the 
change of frame matrix (transition function), which is a smooth map.

Consider the adjoint action of ${\rm GL}_r(\C)$ on it Lie algebra $\mathfrak{gl}_r(\C)\,=\,
{\rm M}_r(\C)$. Let $f$ be a ${\rm GL}_r(\C)$-invariant homogeneous 
polynomial on $\mathfrak{gl}_r(\C)$ of degree $p$. Then, we can associate a unique
$p$-multilinear symmetric map $\widetilde{f}$ on $\mathfrak{gl}_r(\C)$ 
such that $f(X)\,=\, \widetilde{f}(X,\cdots ,X)$, for all $X \in \mathfrak{gl}_r(\C)$.
Define $$\gamma_\alpha \,=\, \widetilde{f}(\Omega_\alpha,\,\cdots, \,\Omega_\alpha)
\,=\, f(\Omega_\alpha)\,\in\,
\cat{A}^{2p}_{X/S}(U_\alpha)\, .$$
Since $f$ is ${\rm GL}_r(\C)$-invariant, it follows that $\gamma_\alpha$ is independent of
the choice of frame, and hence it
defines a global smooth relative differential form of degree $2p$, which we denote by the symbol
$\gamma \in \cat{A}^{2p}_{X/S}(X)$.

\begin{theorem}\label{thm:5}
Let $\pi\,:\, X\,\longrightarrow\,S$ be a surjective holomorphic proper 
submersion of complex manifolds with connected fibers and $\varpi\,:\, E \,\longrightarrow\, X$ a differentiable
family of complex vector bundle. Let $D$ be a smooth $S$-connection on $E$.
Suppose that $f$ is a ${\rm GL}_r(\C)$-invariant 
polynomial function on $\mathfrak{gl}_r(\C)$ of degree $p$. Then the following hold:
\begin{enumerate}
\item $\gamma\,=\, f(\Omega_\alpha)$ is $d_{X/S}$-closed, that is,
$d_{X/S}(\gamma) \,=\, 0$.

\item The image $[\gamma]$ of $\gamma$ in the relative de Rham cohomology group
$$\gcoh{2p}{\Gamma(X,\,\cat{A}^\bullet_{X/S} )} \,=\, 
\coh{2p}{X}{\pi^{-1}\cat{C}^\infty_{S}}$$
is independent of the smooth $S$-connection $D$ on $E$.
\end{enumerate}
\end{theorem}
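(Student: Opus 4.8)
The plan is to adapt the classical Chern--Weil argument to the relative setting, the essential tools being the relative exterior derivative $d_{X/S}$ together with its graded Leibniz rule (Theorem \ref{thm:2}\eqref{35}) and the connection and curvature matrices of Section \ref{cc-mt}.

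For (1), I would first derive the relative Bianchi identity on a trivializing open set $U_\alpha$. Starting from $\Omega_\alpha \,=\, d_{X/S}\omega_\alpha + \omega_\alpha\wedge\omega_\alpha$ (Section \ref{cc-mt}), applying $d_{X/S}$, using $d_{X/S}\circ d_{X/S}\,=\,0$ and the graded Leibniz rule for $1$-forms (applied entrywise, since matrix multiplication is bilinear), and substituting back $d_{X/S}\omega_\alpha\,=\,\Omega_\alpha - \omega_\alpha\wedge\omega_\alpha$, one obtains
$$d_{X/S}\Omega_\alpha \,=\, \Omega_\alpha\wedge\omega_\alpha - \omega_\alpha\wedge\Omega_\alpha\, .$$
Then, since the entries of $\Omega_\alpha$ are forms of even degree, the symmetry of $\widetilde{f}$ gives $d_{X/S}\gamma_\alpha \,=\, p\,\widetilde{f}(d_{X/S}\Omega_\alpha,\Omega_\alpha,\cdots,\Omega_\alpha)$. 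Plugging in the Bianchi identity and invoking the infinitesimal form of the $\mathrm{GL}_r(\C)$-invariance of $f$ --- obtained by differentiating $f(\mathrm{Ad}_{\exp(tA)}X)\,=\,f(X)$ at $t\,=\,0$, which yields $\sum_{i}\widetilde{f}(X,\cdots,[A,X],\cdots,X)\,=\,0$ and extends to matrix-valued forms of even degree with no sign corrections --- shows that $p\,\widetilde{f}(\Omega_\alpha\wedge\omega_\alpha - \omega_\alpha\wedge\Omega_\alpha,\Omega_\alpha,\cdots,\Omega_\alpha)\,=\,0$. Hence $d_{X/S}\gamma\,=\,0$; although this is a local computation, $\gamma$ is already a globally defined section of $\cat{A}^{2p}_{X/S}$, so the vanishing is global.

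For (2), given two smooth $S$-connections $D^0$ and $D^1$ on $E$, I would set $\eta_\alpha\,:=\,\omega^1_\alpha - \omega^0_\alpha$; the transformation law $\omega'\,=\,a^{-1}d_{X/S}a + a^{-1}\omega a$ from Section \ref{cc-mt} forces $\eta_\beta\,=\,g_{\alpha\beta}^{-1}\eta_\alpha g_{\alpha\beta}$, so $\eta$ glues to a global $\END[\struct{X}]{E}$-valued relative $1$-form and, for each $s\,\in\,[0,1]$, the affine combination $D^s$ with connection matrix $\omega^s_\alpha\,=\,\omega^0_\alpha + s\,\eta_\alpha$ is again a smooth $S$-connection, with some curvature matrix $\Omega^s_\alpha$. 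I would then establish the transgression formula
$$\gamma^1 - \gamma^0 \,=\, d_{X/S}\Big( p\int_0^1 \widetilde{f}(\eta_\alpha,\Omega^s_\alpha,\cdots,\Omega^s_\alpha)\, ds\Big)$$
by computing $\tfrac{d}{ds}\widetilde{f}(\Omega^s_\alpha,\cdots,\Omega^s_\alpha)\,=\,p\,\widetilde{f}(\tfrac{d}{ds}\Omega^s_\alpha,\Omega^s_\alpha,\cdots,\Omega^s_\alpha)$, using $\tfrac{d}{ds}\Omega^s_\alpha\,=\,d_{X/S}\eta_\alpha + \omega^s_\alpha\wedge\eta_\alpha + \eta_\alpha\wedge\omega^s_\alpha$, and again applying the infinitesimal invariance of $f$ to rewrite the result as $d_{X/S}$ of the integrand $p\,\widetilde{f}(\eta_\alpha,\Omega^s_\alpha,\cdots,\Omega^s_\alpha)$; the $\mathrm{GL}_r(\C)$-invariance shows this integrand is frame-independent, hence a global relative $(2p-1)$-form. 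Integrating over $s\,\in\,[0,1]$ gives the formula, so $[\gamma^1]\,=\,[\gamma^0]$ in $\gcoh{2p}{\Gamma(X,\,\cat{A}^\bullet_{X/S})}$.

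The part requiring the most care is the sign and degree bookkeeping in the graded-commutative algebra $\cat{A}^\bullet_{X/S}$ when passing from the purely algebraic infinitesimal-invariance identity for $f$ to its version for matrices with relative-form entries, and verifying that each local expression ($\gamma_\alpha$, the Bianchi correction term, the transgression integrand) is compatible under the transition functions $g_{\alpha\beta}$ and so descends to a global section of the appropriate sheaf. Given Theorem \ref{thm:2} and the structure established in Sections \ref{cc-mt} and \ref{Smooth-rel-form}, these points are routine, but they are where the argument has to be checked with care.
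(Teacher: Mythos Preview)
Your argument is the standard Chern--Weil proof, carried out carefully in the relative setting using the relative exterior derivative $d_{X/S}$, the Bianchi identity, and the transgression formula; the sign bookkeeping and globalization checks you flag are exactly the points that need attention, and they go through as you describe. The paper itself does not give a proof but simply cites \cite[Chapter~II, Section~2, p.~36]{K}, so your proposal supplies precisely the details that the reference contains (transported from the absolute to the relative case), and is in that sense the same approach.
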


\begin{proof}
This is proved in \cite{K} (Chapter II, Section 2, p.~36).
\end{proof}

Define homogeneous polynomials $f_p$ on $\mathfrak{gl}_r(\C)$, of degree 
$p\,=\, 1,\,2,\,\cdots ,\,r$, to be the coefficient of $\lambda^p$ in the following 
expression:
$$
\mathrm{det}(\lambda \mathrm{I} + \frac{\sqrt{-1}}{2 \pi} A) \,=\, \Sigma_{j=0}^{r}
\lambda^{r-j} f_j(\frac{\sqrt{-1}}{2 \pi} A)\, ,
$$
where $f_0(\frac{\sqrt{-1}}{2 \pi} A) \,=\, 1$ while $f_r(\frac{\sqrt{-1}}{2 \pi} A)$ is the 
coefficient of $\lambda^0$. These polynomials $f_1,\,\cdots,\, f_r$ are $GL_r(\C)$-invariant, and
they generate the algebra of ${\rm GL}_r(\C)$-invariant polynomials on 
$\mathfrak{gl}_r(\C)$. We now define the \emph{p-th cohomology class} as follows:
$$
c^{S}_p(E)\,=\, [f_p(\frac{\sqrt{-1}}{2 \pi} \Omega)]\,\in\, \gcoh{2p}{\Gamma(X,\,
\cat{A}^\bullet_{X/S})}
$$
for $p \,=\,0,\,1,\,\cdots ,\,r$. 
 
The relative de Rham cohomology sheaf $\drcoh{p}{dR}{X/S} \,\cong\, 
R^p \pi_\ast(\pi^{-1}\cat{C}^\infty_{S})$ on $S$ is by 
definition the sheafification of the presheaf $V\,\longmapsto\, 
\coh{p}{\pi^{-1}(V)}{\pi^{-1}\cat{C}^\infty_{S}|_{\pi^{-1}(V)}}$,
for open subset $V \,\subset\, S$. Therefore, we have a natural homomorphism
$$
\rho\,:\, \coh{2p}{X}{\pi^{-1}\cat{C}^\infty_{S}} \,\longrightarrow\,
\drcoh{2p}{dR}{X/S}(S)
$$
which maps $c^{S}_p(E)$ to $\rho( c^S_p(E))\,\in\, \drcoh{2p}{dR}{X/S}(S)$.

Define $C^S_p(E) \,=\, \rho(c^S_p(E))$. We call $C^S_p(E)$ the \emph{p-th 
relative Chern class of E over S}. 
Let $$C^S(E)\,=\, \sum_{p \geq 0} C^S_p(E)\,\in\, \drcoh{*}{dR}{X/S}(S)\,=\, 
\oplus_{k \geq 0} \drcoh{k}{dR}{X/S}(S)$$ be the \emph{total relative 
Chern class} of $E$.

\begin{proposition}
\label{lem:3K1} Let $E \,\xrightarrow{\varpi}\, X \,\xrightarrow{\pi}\, S$ be as in
Theorem \ref{thm:5}. Let $\pi'\,:\,Y \,\longrightarrow\, T$ be a surjective holomorphic proper 
submersion, such that the following diagram
\begin{equation}
\label{eq:5.91}
\xymatrix{Y \ar[d]^{\pi'} \ar[r]^f & X \ar[d]^\pi \\
T \ar[r]^g & S \\}
\end{equation} 
is commutative, where $f\,:\, Y \,\longrightarrow\, X$ and $g\,:\, T
\,\longrightarrow\, S$ are holomorphic maps. Then 
$$
f^*(C^S(E))\,= \, C^T(f^*E)\, ,
$$
where $C^S(E)$ is the total relative Chern class of $E$ over $S$, and
$C^T(f^*E)$ is the total relative Chern class of $f^*E$ over $T$. 
\end{proposition}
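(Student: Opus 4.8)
The plan is to compute both total relative Chern classes from a single auxiliary datum — a smooth relative connection on $E$ — and then transport it along $f$. The key mechanism is that $\widetilde{f^*}$ pulls back connection and curvature matrices and is a homomorphism of sheaves of algebras on the relative differential forms commuting with the relative exterior derivative, which is exactly what is needed to move it past the invariant polynomials $f_p$ and past $d_{X/S}$.

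First I would fix a smooth $S$-connection $D$ on $E$; one exists by patching the canonical $S$-connections on the members of a trivializing cover with a smooth partition of unity on $X$ (Section \ref{Sec:D-Sum}), and by Theorem \ref{thm:5}(2) the resulting relative Chern classes are independent of this choice, so the asserted identity is well-posed. Next I construct a smooth $T$-connection $D'$ on $f^*E$: on a trivializing open set $U_\alpha \subset X$ with holomorphic frame $s_\alpha$, let $\omega_\alpha$ be the connection matrix of $D$ (with entries in $\cat{A}^1_{X/S}(U_\alpha)$, cf. Section \ref{cc-mt}), and declare the connection matrix of $D'$ with respect to the pulled-back frame $f^{*}s_\alpha$ on $f^{-1}(U_\alpha)$ to be $\widetilde{f^*}(\omega_\alpha)$, which is well-defined by Proposition \ref{prop:3.A.1}. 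Using the change-of-frame rule $\omega'_\beta = a^{-1}da + a^{-1}\omega_\beta a$ of Section \ref{cc-mt}, together with the facts that $\widetilde{f^*}$ is multiplicative and that $d_{Y/T}\circ\widetilde{f^*} = \widetilde{f^*}\circ d_{X/S}$ (which follows from the commutativity of \eqref{eq:3.2} and from the construction of $d_{X/S}$ as a covariant derivative, Theorems \ref{thm:2} and \ref{thm:1.5}), one checks that the matrices $\widetilde{f^*}(\omega_\alpha)$ transform with the transition functions $g_{\alpha\beta}\circ f$ of $f^*E$ and hence glue to a genuine smooth $T$-connection $D'$.

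Then I would compare curvatures. From $\Omega_\alpha = d_{X/S}\omega_\alpha + \omega_\alpha\wedge\omega_\alpha$ (Section \ref{cc-mt}) and the compatibility of $\widetilde{f^*}$ with $d_{X/S}$ and with $\wedge$, the curvature matrix of $D'$ over $f^{-1}(U_\alpha)$ equals $\widetilde{f^*}(\Omega_\alpha)$. Since $\widetilde{f^*}\colon \cat{A}^\bullet_{X/S}\to f_*\cat{A}^\bullet_{Y/T}$ is an algebra homomorphism, for each $\mathrm{GL}_r(\C)$-invariant homogeneous polynomial $f_p$ we obtain
$$
f_p\Big(\tfrac{\sqrt{-1}}{2\pi}\,\widetilde{f^*}(\Omega_\alpha)\Big) \,=\, \widetilde{f^*}\Big(f_p\big(\tfrac{\sqrt{-1}}{2\pi}\,\Omega_\alpha\big)\Big)\, ,
$$
so the global closed relative $2p$-form on $Y$ representing $c^{T}_p(f^*E)$ is the $\widetilde{f^*}$-image of the one on $X$ representing $c^{S}_p(E)$. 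The chain map $\widetilde{f^*}$ induces, via the square \eqref{eq:5.91}, a pullback $f^*\colon \drcoh{2p}{dR}{X/S}(S)\to \drcoh{2p}{dR}{Y/T}(T)$ compatible with the sheafification map $\rho$, and passing to cohomology gives $f^*(c^{S}_p(E)) = c^{T}_p(f^*E)$, hence $f^*(C^{S}_p(E)) = C^{T}_p(f^*E)$; summing over $p$ yields $f^*(C^S(E)) = C^T(f^*E)$.

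The step I expect to be the main obstacle is the pullback connection together with the precise meaning of pullback on relative de Rham cohomology: one must verify carefully that the locally defined matrices $\widetilde{f^*}(\omega_\alpha)$ genuinely satisfy the cocycle conditions of a smooth $T$-connection on $f^*E$, and — more delicately — pin down the definition of $f^*$ on $\drcoh{2p}{dR}{X/S}$ induced by the square \eqref{eq:5.91} and confirm that on de Rham representatives it is computed by $\widetilde{f^*}$. Once these are in place, the remainder is formal, resting on Theorem \ref{thm:5} and on the algebra-homomorphism and chain-map properties of $\widetilde{f^*}$ supplied by Proposition \ref{prop:3.A.1}.
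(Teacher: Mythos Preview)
Your proposal is correct and follows essentially the same approach as the paper: fix a smooth $S$-connection on $E$, pull back its connection matrices via $\widetilde{f^*}$ to obtain a smooth $T$-connection on $f^*E$, verify the change-of-frame compatibility, observe that the curvature matrices satisfy $\Omega^*=\widetilde{f^*}\Omega$, and conclude via the invariant polynomials and the commutative square relating $\rho$ with $f^*$. If anything, you are more explicit than the paper about the chain-map property of $\widetilde{f^*}$ and the meaning of the induced pullback on $\drcoh{2p}{dR}{X/S}$.
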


\begin{proof}
Let $D$ be smooth $S$-connection in $E$. It is enough to define a smooth 
$T$-connection $D^*$ in $f^*E$, such that $f^*\Omega \,=\, \Omega^*$, where $
\Omega^*$ is the curvature matrix of $D^*$. Let $e \,=\, (e_1,\,\cdots,\,e_r)$ be a
frame of $E$ over an open subset $U$ of $X$. Then, we have
$e^* \,=\, (e^*_1,\,\cdots,\, e^*_r)$, where $e^*_i \,=\, f \circ e_i^*\,:\, f^{-1}(U)
\,\longrightarrow\, E$ is a frame of $f^*E$ over $f^{-1}(U)$. If $a\,:\, U
\,\longrightarrow\, {\rm GL}_r(\C)$ is a 
change of frame over $U$, then $$f^*a \,=\, a^* \,=\, a \circ f\,:\, f^{-1}(U)\,\longrightarrow\,
{\rm GL}_r(\C)$$ is a change of frame in $f^*E$ over $f^{-1}(U)$. Now, we define 
$S$-connection matrix 
$$
\omega^* \,=\, f^*\omega \,=\, [\widetilde{f^*} \omega_{ij}]\, ,
$$
where, $\omega_{ij}\,\in\, \cat{A}_{X/S}^1(U)$, and $\widetilde{f^*}\,: \,\cat{A}_{X/S}^1
\,\longrightarrow\,\cat{A}_{Y/T}^1$ is the pullback map of the relative forms as in
Proposition \ref{prop:3.A.1}. Moreover, if $\omega'$ is the connection matrix with respect
to the frame $e' \,=\, e.a$ and $\omega'^*$ is the pullback of $\omega'$ under
$f^*$, then 
$$
\omega'^* \,=\, a^{*{-1}} \omega^* a^* + a^{*{-1}} da^*\, .
$$
Thus, if we consider $D^* \,=\, d_{Y/T} + \omega^*$, then from above 
compatibility condition, this defines a smooth $T$-connection in $f^*E$.
Let $\Omega^*$ be the curvature form of $D^*$. Then
$$
\Omega^*\,=\, d_{Y/T} \omega^* + \omega^* \wedge \omega^* 
\, =\, f^* \Omega\, .
$$
Now, consider the homogeneous polynomial $f_p$ of degree $p$ as defined 
above. The $p$-th cohomology class is
$$
c_p^T(f^*E)\,=\, [f_p(\frac{\sqrt{-1}}{2 \pi} \Omega^*)] 
\,=\, f^*[c_p^S(E)] 
$$
for all $p \,\geq \, 0$, which is the pullback of the cohomology class 
$[c_p^S(E)]$, where 
$$f^*\,:\,\gcoh{2p}{\Gamma(X,\cat{A}^\bullet_{X/S} )} \,\longrightarrow\, 
\gcoh{2p}{\Gamma(Y,\cat{A}^\bullet_{Y/T})}$$ is the morphism of $\C$-vector 
spaces induced by the commutative diagram \eqref{eq:5.91}.
Further, we have the following commutative diagram
\begin{equation}
\label{eq:5.94}
\xymatrix{ \coh{2p}{X}{\pi^{-1}\cat{C}^\infty_{S}} \ar[d]^{f^*} 
\ar[r]^\rho & \drcoh{2p}{dR}{X/S}(S) \ar[d]^{f^*} \\
\coh{2p}{Y}{\pi^{-1}\cat{C}^\infty_{T}} \ar[r]^\rho
& \drcoh{2p}{dR}{Y/T}(T)}
\end{equation} 
which implies that $f^*(C_p^S(E))\,=\,C_p^T(f^*E)$. This completes the proof.
\end{proof}

In particular, taking $T\,= \,\{t\}\,\subset \,S$, $g$ to be the inclusion map
$t \,\hookrightarrow\, S$, $Y \,=\, X_t$, $\pi' \,=\, \pi|_{X_t}\,:\, X_t
\,\longrightarrow\, T$ and $f$ to be the inclusion map $j\,:\,X_t\,\hookrightarrow\, X$,
by Proposition \ref{lem:3K1}, we have the following:

\begin{corollary}\label{cor:5.1}
For every $t \,\in\, S$, there is a natural map $$j^* \,:\,\drcoh{2p}{dR}{X/S}(S)
\,\longrightarrow\,\coh{2p}{X_t}{\C}$$ which maps the $p$-th relative Chern class of $E$ to
the $p$-th Chern class of the smooth vector bundle $E_t \,\longrightarrow\, X_t$, that is,
$j^*(C_p^S(E)) \,=\, c_p(E_t)$.
\end{corollary}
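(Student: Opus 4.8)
The plan is to deduce this as a direct specialization of Proposition \ref{lem:3K1} to the base-change square given by the inclusion of a single fibre. First I would set $T \,=\, \{t\}$, take $g\,:\, \{t\}\,\hookrightarrow\, S$ to be the inclusion, put $Y \,=\, X_t \,=\, \pi^{-1}(t)$ with $\pi'\,=\, \restrict{\pi}{X_t}\,:\, X_t \,\longrightarrow\, \{t\}$, and let $f \,=\, j\,:\, X_t \,\hookrightarrow\, X$ be the inclusion. Since $X_t$ is compact (being a fibre of the proper map $\pi$), the map $\pi'$ is a surjective holomorphic proper submersion onto the point and $\pi\circ j \,=\, g\circ \pi'$, so the hypotheses of Proposition \ref{lem:3K1} are met; it then yields
$$
j^*\big(C^S_p(E)\big) \,=\, C^{\{t\}}_p(j^*E) \,=\, C^{\{t\}}_p(E_t)
$$
for all $p \,\geq\, 0$.

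Next I would unwind what the relative notions become over a one-point base. There $\cat{C}^\infty_T$ is just $\C$, so ${\pi'}^{-1}\cat{C}^\infty_T$ is the constant sheaf $\C$ on $X_t$; the relative tangent bundle $T(X_t/\{t\})$ is the ordinary tangent bundle of $X_t$; and therefore the smooth relative de Rham complex $\cat{A}^\bullet_{X_t/\{t\}}$ is the ordinary de Rham complex of $\C$-valued forms on $X_t$. Taking global sections over $T \,=\, \{t\}$ of the relative de Rham sheaf then identifies $\drcoh{2p}{dR}{X_t/\{t\}}(\{t\})$ with $\coh{2p}{X_t}{\C}$, and the pullback $\drcoh{2p}{dR}{X/S}(S) \,\longrightarrow\, \drcoh{2p}{dR}{X_t/\{t\}}(\{t\})$ induced by the above square is precisely the map denoted $j^*$ in the statement; diagram \eqref{eq:5.94} in the proof of Proposition \ref{lem:3K1} makes this pullback explicit in terms of the natural maps $\rho$. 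It then remains to observe that $C^{\{t\}}_p(E_t) \,=\, c_p(E_t)$: a smooth $\{t\}$-connection on $E_t$ is simply a smooth connection on $E_t \,\longrightarrow\, X_t$ in the classical sense, its relative curvature matrix coincides with the ordinary curvature matrix, and $f_p(\frac{\sqrt{-1}}{2\pi}\Omega)$ is exactly the Chern--Weil representative of $c_p(E_t)$. Chaining these identifications gives $j^*(C^S_p(E)) \,=\, c_p(E_t)$.

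Essentially the entire argument is bookkeeping once Proposition \ref{lem:3K1} is available, so I do not expect a genuine obstacle. The one point that deserves a little care is the naturality, under base change, of the isomorphism $\drcoh{p}{dR}{X/S} \,\cong\, R^p\pi_\ast(\pi^{-1}\cat{C}^\infty_S)$ — that is, that the pullback map furnished by Proposition \ref{lem:3K1} really corresponds, under this isomorphism, to the restriction-to-$X_t$ map one has in mind. But this compatibility is exactly what diagram \eqref{eq:5.94} records, so it is already at hand.
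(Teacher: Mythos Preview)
Your proposal is correct and follows exactly the paper's own approach: the corollary is obtained by specializing Proposition \ref{lem:3K1} to the base-change square with $T=\{t\}$, $g$ the inclusion $\{t\}\hookrightarrow S$, $Y=X_t$, $\pi'=\restrict{\pi}{X_t}$, and $f=j\,:\,X_t\hookrightarrow X$. Your additional remarks unwinding why the relative de Rham cohomology and relative Chern classes over a point coincide with the absolute ones are a welcome elaboration of what the paper leaves implicit.
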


The following topological proper base change theorem is given in \cite[p.~202, Remark 4.17.1]{G}
and \cite[p.~19, Corollary 2.25]{D}:

\begin{theorem}[{Topological proper base change}]\label{thm:3.13}
Let $f\,:\, X \, \longrightarrow\, S$ be a proper continuous map of Hausdorff topological spaces. 
Suppose that $S$ is locally compact, and $\cat{F}$ is a sheaf of abelian 
groups on $X$. Then for all $t \,\in \,S$, we have a canonical isomorphism
$$
(R^pf_*\cat{F})_t \,\simeq\, \coh{p}{f^{-1}(t)}{\cat{F}|_{f^{-1}(t)}}
$$
of abelian groups.
\end{theorem}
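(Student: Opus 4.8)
The plan is to exhibit both sides of the claimed isomorphism as one and the same filtered colimit. First I would unwind the left-hand side. By definition $R^pf_*\cat{F}$ is the sheafification of the presheaf $V\,\longmapsto\,\coh{p}{f^{-1}(V)}{\cat{F}}$ on $S$, so its stalk at $t$ is
$$
(R^pf_*\cat{F})_t \;=\; \varinjlim_{V\ni t} \coh{p}{f^{-1}(V)}{\cat{F}}\, ,
$$
the colimit being taken over the open neighbourhoods $V$ of $t$, ordered by reverse inclusion. The canonical map in the statement is the one induced on this colimit by the restriction maps $\coh{p}{f^{-1}(V)}{\cat{F}}\,\longrightarrow\,\coh{p}{f^{-1}(t)}{\restrict{\cat{F}}{f^{-1}(t)}}$ attached to the inclusion of the closed subset $f^{-1}(t)$ in the open set $f^{-1}(V)$, and it suffices to prove that this induced map is bijective.

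Next I would isolate the topological input coming from properness. Since $f$ is proper and $S$ is locally compact Hausdorff, $f$ is a closed map and the fibre $K:=f^{-1}(t)$ is compact. Closedness yields the usual cofinality statement: for every open $U\subseteq X$ containing $K$, the set $S\setminus f(X\setminus U)$ is an open neighbourhood of $t$ whose preimage lies in $U$. Hence $\set{f^{-1}(V)\suchthat V \text{ an open neighbourhood of } t}$ is cofinal in the directed family of all open neighbourhoods of $K$ in $X$, and therefore
$$
(R^pf_*\cat{F})_t \;=\; \varinjlim_{U\supseteq K} \coh{p}{U}{\cat{F}}\, ,
$$
the colimit now running over all open neighbourhoods $U$ of $K$.

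It then remains to prove the ``continuity'' identity $\varinjlim_{U\supseteq K}\coh{p}{U}{\cat{F}}\,\cong\,\coh{p}{K}{\restrict{\cat{F}}{K}}$ for the compact set $K$. For this I would fix a flabby resolution $\cat{F}\,\longrightarrow\,\cat{I}^\bullet$ on $X$. Each $\restrict{\cat{I}^q}{U}$ is flabby, hence acyclic for $\Gamma(U,-)$, so $\coh{p}{U}{\cat{F}}=\gcoh{p}{\Gamma(U,\cat{I}^\bullet)}$; and the sheaves $\restrict{\cat{I}^q}{K}$ form a $\Gamma(K,-)$-acyclic resolution of $\restrict{\cat{F}}{K}$ on the compact space $K$, so $\coh{p}{K}{\restrict{\cat{F}}{K}}=\gcoh{p}{\Gamma(K,\restrict{\cat{I}^\bullet}{K})}$. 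A standard compactness argument gives $\varinjlim_{U\supseteq K}\Gamma(U,\cat{G})=\Gamma(K,\restrict{\cat{G}}{K})$ for any sheaf $\cat{G}$ on $X$, applied here to each $\cat{I}^q$. Combining these with the exactness of filtered colimits,
\begin{align*}
\varinjlim_{U\supseteq K}\coh{p}{U}{\cat{F}} \;&=\; \varinjlim_{U}\gcoh{p}{\Gamma(U,\cat{I}^\bullet)} \;=\; \gcoh{p}{\varinjlim_{U}\Gamma(U,\cat{I}^\bullet)} \\
&=\; \gcoh{p}{\Gamma(K,\restrict{\cat{I}^\bullet}{K})} \;=\; \coh{p}{K}{\restrict{\cat{F}}{K}}\, ,
\end{align*}
and tracing the maps through the construction shows that the resulting isomorphism $(R^pf_*\cat{F})_t\,\cong\,\coh{p}{K}{\restrict{\cat{F}}{K}}$ is the restriction map described above.

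I expect the real work to sit in the third step: checking that $\restrict{\cat{I}^\bullet}{K}$ is still an acyclic resolution of $\restrict{\cat{F}}{K}$ (that is, that the relevant acyclicity class --- flabby, or soft --- and the exactness of the resolution are inherited under restriction to the closed compact fibre $K$), and verifying $\varinjlim_{U\supseteq K}\Gamma(U,\cat{G})=\Gamma(K,\restrict{\cat{G}}{K})$. This is precisely where the hypotheses ``$f$ proper'' and ``$S$ locally compact'' get consumed, whereas the reduction of the left-hand side to a stalk-colimit in the first step and the cofinality argument of the second step are formal once one knows that $f$ is closed.
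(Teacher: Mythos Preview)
Your argument is the standard one and is essentially correct; the only point that needs a little more care is the third step. The restriction of a flabby sheaf to a closed subspace is not flabby in general, so you should either (i) observe that $X$ is locally compact Hausdorff (because $f$ is proper and $S$ is locally compact Hausdorff), whence flabby sheaves on $X$ are soft, and softness \emph{is} inherited by restriction to the closed set $K$; or (ii) work with a soft (or $c$-soft) resolution from the start. Either way your identity $\varinjlim_{U\supseteq K}\Gamma(U,\cat{G})=\Gamma(K,\restrict{\cat{G}}{K})$ and the commutation of filtered colimits with cohomology finish the job, and you already flagged this as the place where the real work sits.

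As for comparison with the paper: there is nothing to compare. The paper does not prove Theorem~\ref{thm:3.13}; it merely quotes it from Godement \cite[p.~202, Remark~4.17.1]{G} and Deligne \cite[p.~19, Corollary~2.25]{D}. Your write-up is in fact a sketch of the proof one finds in Godement.
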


Note that $\drcoh{p}{dR}{X/S}$ is a locally free 
$\cat{C}^\infty_{S}$-module, and from Theorem \ref{thm:3.13}
we have a $\C$-vector space isomorphism
\begin{equation}
\label{eq:3.I.1}
\eta\,:\, \drcoh{p}{dR}{X/S}_t \otimes_{\cat{C}^\infty_{{S},t}} k(t)
\, \longrightarrow\, \coh{p}{X_t}{\C}
\end{equation}
for every $t \,\in\, S$.

\begin{theorem}\label{thm:3.18}
Let $\pi\,:\, X \, \longrightarrow\, S$ be a surjective holomorphic proper
submersion with connected fibers, such that $\pi^{-1}(t) \,=\, X_t$ is compact
K\"ahler manifold for every $t\, \in\, S$.
Let $\varpi\, :\, E \, \longrightarrow\, X$ be a holomorphic vector 
bundle. Suppose that $E$ admits a holomorphic $S$-connection.
Then all the relative Chern classes $C_p^S(E) \,\in\, \drcoh{2p}{dR}{X/S}(S)$ 
of $E$ over $S$ are zero.
\end{theorem}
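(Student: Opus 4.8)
The plan is to reduce the statement, fiber by fiber over $S$, to the classical vanishing theorem of Atiyah for holomorphic bundles carrying holomorphic connections, using the base change identifications already in place. The elementary principle underneath is that a global section of a locally free $\cat{C}^\infty_S$-module is zero as soon as it vanishes in every fiber: if $s$ is a global section of such a module $M$, write $s=\sum_i f_i e_i$ over a local $\cat{C}^\infty_S$-frame $(e_i)$ with smooth coefficient functions $f_i$; the image of $s$ in the fiber $M_t\otimes_{\cat{C}^\infty_{S,t}} k(t)$ is $\sum_i f_i(t)e_i(t)$, which vanishes if and only if $f_i(t)=0$ for all $i$, so if this holds for every $t$ then all the $f_i$ vanish identically and $s=0$. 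Since $\drcoh{2p}{dR}{X/S}$ is a locally free $\cat{C}^\infty_S$-module, it thus suffices to prove that for every $t\in S$ and every $p\geq 1$ the image of $C_p^S(E)$ in $\drcoh{2p}{dR}{X/S}_t\otimes_{\cat{C}^\infty_{S,t}} k(t)$ is zero.

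Next I would identify these fibers. By the isomorphism $\eta$ of \eqref{eq:3.I.1}, coming from the topological proper base change theorem, there is a canonical identification $\drcoh{2p}{dR}{X/S}_t\otimes_{\cat{C}^\infty_{S,t}} k(t)\cong \coh{2p}{X_t}{\C}$. Moreover, under this identification the natural map from $\drcoh{2p}{dR}{X/S}(S)$ to the fiber at $t$ (global section, then germ at $t$, then reduction to the fiber) coincides with the map $j^*$ of Corollary \ref{cor:5.1}; this is precisely the compatibility recorded by the commutative square \eqref{eq:5.94} in the proof of Proposition \ref{lem:3K1}, specialized to $T=\{t\}$ and $f=j$. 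By Corollary \ref{cor:5.1}, $j^*$ sends $C_p^S(E)$ to the ordinary $p$-th Chern class $c_p(E_t)\in \coh{2p}{X_t}{\C}$ of the restricted holomorphic bundle $E_t\to X_t$. Hence the theorem is equivalent to the assertion that $c_p(E_t)=0$ in $\coh{2p}{X_t}{\C}$ for every $t\in S$ and every $p\geq 1$.

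Now I would invoke the hypothesis that $E$ admits a holomorphic $S$-connection. By Proposition \ref{pro:3J} this produces, for every $t\in S$, a holomorphic connection on the bundle $E_t\to X_t$; and by assumption $X_t$ is a compact connected K\"ahler manifold. The theorem of Atiyah \cite{A} then applies: a holomorphic vector bundle over a compact K\"ahler manifold that carries a holomorphic connection has $c_p=0$ in $\coh{2p}{X_t}{\C}$ for all $p\geq 1$. Concretely, the existence of a holomorphic connection forces the Atiyah class of $E_t$ in $\coh{1}{X_t}{\Omega^1_{X_t}(\END[\struct{X_t}]{E_t})}$ to vanish; the trace-of-powers maps then send it to zero in $\coh{p}{X_t}{\Omega^p_{X_t}}$; and since on a K\"ahler manifold $c_p(E_t)$ has pure Hodge type $(p,p)$ and is recovered (up to a nonzero scalar) from that image, it too vanishes in $\coh{2p}{X_t}{\C}$. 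Combined with the reduction of the first two paragraphs, this gives $C_p^S(E)=0$ for all $p\geq 1$, as claimed.

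The only genuinely substantial ingredient here is the classical fiberwise theorem of Atiyah, which I would quote rather than reprove, and it is also exactly where the K\"ahler hypothesis enters: without it a holomorphic connection only forces the Chern classes to vanish in Hodge (equivalently Bott--Chern) cohomology, not necessarily in $\coh{2p}{X_t}{\C}$. Within the relative setup the single point to check with some care is the compatibility of the fiberwise restriction $j^*$ with the base change isomorphism $\eta$ of \eqref{eq:3.I.1}, i.e.\ that the fiber at $t$ of the relative Chern class is literally the Chern class of $E_t$; granted that, the passage from vanishing in every fiber to vanishing as a global section is just the local-frame argument recorded at the outset.
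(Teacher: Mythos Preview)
Your proposal is correct and follows essentially the same approach as the paper: restrict the relative $S$-connection to each fiber via Proposition~\ref{pro:3J}, invoke Atiyah's theorem on compact K\"ahler manifolds to kill $c_p(E_t)$, then use Corollary~\ref{cor:5.1} together with the base-change isomorphism $\eta$ of \eqref{eq:3.I.1} and local freeness of $\drcoh{2p}{dR}{X/S}$ to conclude that $C_p^S(E)=0$. Your write-up is in fact more explicit than the paper's on two points the paper leaves implicit---the local-frame argument that a section of a locally free $\cat{C}^\infty_S$-module vanishing in every fiber is zero, and the compatibility between $j^*$ and $\eta$---but the logical skeleton is identical.
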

 
\begin{proof}
Let $D$ be a holomorphic $S$-connection on $E$. From Proposition
\ref{pro:3J} it follows that for every $t \,\in\, S$, there is a holomorphic connection $D_t$ 
in $E_t$. Since $X_t$ is a compact complex manifold of
K\"ahler type, from Theorem \textbf{4} in \cite[p.~192]{A} it follows that
all the Chern classes $c_p(E_t)$ of $E_t$ are zero. From Corollary \ref{cor:5.1} and
the isomorphism in \eqref{eq:3.I.1} we have the following commutative diagram;
$$
\xymatrix{\drcoh{2p}{dR}{X/S}(S) \ar[rd]_{j^*} \ar[r] & 
\drcoh{p}{dR}{X/S}_t \otimes_{\cat{C}^\infty_{{S},t}} k(t) 
\ar[d]^{\eta} \\
& \coh{2p}{X_t}{\C} \\ }
$$
Now,
$$
\eta(C_p^S(E)_t \otimes 1)\,=\, j^*(C_p^S(E)) \,=\, c_p(E_t) \,=\, 0\, ,
$$
which implies that $C_p^S(E)_t \otimes 1 \,=\, 0$, for every $t \,\in\, S$,
because $\eta$ is an isomorphism. Thus, we have $C_p^S(E)\, =\, 0$. This completes the proof.
\end{proof}

\subsection{A sufficient condition for holomorphic connection}\label{suff}

Given a surjective holomorphic proper submersion $\pi \,: X \,\longrightarrow\,S$
with connected fibers and 
a holomorphic vector bundle $\varpi\,:\, E \,\longrightarrow\, X$, Proposition 
\ref{pro:3J} gives a necessary condition for the existence of a holomorphic $S$-connection
on $E$, namely the vector bundle $E_t\,=\, E\vert_{X_t} \,\longrightarrow\, X_t$ should
admit a holomorphic connection for every $t\, \in\, S$.

If for every each $t\, \in\, S$ the vector bundle $E_t$ admits a holomorphic connection, it
is natural to ask whether $E$ admits a holomorphic $S$-connection.
We will present a sufficient condition for the existence of holomorphic $S$-connection on $E$.

\begin{theorem}\label{thm:3l}
Let $E \,\stackrel{\varpi}{\longrightarrow}\, X$
be a holomorphic vector bundle. Suppose that for every
$t \in S$, there is a holomorphic connection on the holomorphic vector bundle
$\varpi|_{E_t}\, :\, E_t \,\longrightarrow\, X_t$, and
$$\coh{1}{S}{\pi_*(\Omega^1_{X/S}(\END[\struct{X}]{E}))} \,=\, 0\, .$$
Then, $E$ admits a holomorphic $S$-connection.
\end{theorem}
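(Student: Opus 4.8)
The plan is to reduce, via Corollary \ref{cor:4}, to the vanishing of the relative Atiyah class $\at[S]{E}\,\in\,\coh{1}{X}{\Omega^1_{X/S}(\END[\struct{X}]{E})}$, and then to force this vanishing with the Leray spectral sequence of $\pi$ and the hypothesis. Write $\cat{F}\,=\,\Omega^1_{X/S}(\END[\struct{X}]{E})$. The Leray spectral sequence $\coh{p}{S}{R^q\pi_*\cat{F}}\,\Longrightarrow\,\coh{p+q}{X}{\cat{F}}$ gives the five-term exact sequence
$$
0\,\longrightarrow\,\coh{1}{S}{\pi_*\cat{F}}\,\longrightarrow\,\coh{1}{X}{\cat{F}}\,\stackrel{e}{\longrightarrow}\,\coh{0}{S}{R^1\pi_*\cat{F}}\,\longrightarrow\,\coh{2}{S}{\pi_*\cat{F}}\, ,
$$
so the assumption $\coh{1}{S}{\pi_*\cat{F}}\,=\,0$ makes the edge homomorphism $e$ injective. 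Hence it suffices to show $e(\at[S]{E})\,=\,0$ in $\coh{0}{S}{R^1\pi_*\cat{F}}$.

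Since $R^1\pi_*\cat{F}$ is a sheaf, the section $e(\at[S]{E})$ vanishes once all of its germs do, so I would fix $t\,\in\,S$ and prove the germ at $t$ is zero. The topological proper base change theorem (Theorem \ref{thm:3.13}, available because $\pi$ is proper and $S$ is locally compact Hausdorff) identifies the stalk $(R^1\pi_*\cat{F})_t$ with cohomology of the fibre $X_t$, and under this identification the germ of $e(\at[S]{E})$ corresponds to the image of $\at[S]{E}$ under the restriction map $\coh{1}{X}{\cat{F}}\,\longrightarrow\,\coh{1}{X_t}{\cat{F}|_{X_t}}$. Here $\cat{F}|_{X_t}$ is canonically $\Omega^1_{X_t}(\END[\struct{X_t}]{E_t})$ (the relative cotangent sheaf restricts to $\Omega^1_{X_t}$ and $\END[\struct{X}]{E}$ restricts to $\END[\struct{X_t}]{E_t}$), and, by the commutative diagram \eqref{eq:cd2}, restricting the relative Atiyah sequence \eqref{eq:82} of $E$ to $X_t$ yields precisely the Atiyah sequence of the holomorphic vector bundle $E_t\,\longrightarrow\, X_t$. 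Functoriality of the extension class then shows that $\at[S]{E}$ restricts to the ordinary Atiyah class $\At{E_t}\,\in\,\coh{1}{X_t}{\Omega^1_{X_t}(\END[\struct{X_t}]{E_t})}$.

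By hypothesis $E_t\,\longrightarrow\, X_t$ carries a holomorphic connection for every $t\,\in\,S$, so $\At{E_t}\,=\,0$ (this is the absolute case, $S\,=\,\{t\}$, of Corollary \ref{cor:4}; equivalently Atiyah's theorem \cite{A}). Thus every germ of $e(\at[S]{E})$ vanishes, hence $e(\at[S]{E})\,=\,0$; injectivity of $e$ gives $\at[S]{E}\,=\,0$, and $E$ admits a holomorphic $S$-connection by Corollary \ref{cor:4}.

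I expect the second paragraph to be the delicate one: Theorem \ref{thm:3.13} computes $(R^1\pi_*\cat{F})_t$ in terms of the sheaf-theoretic restriction of $\cat{F}$ to $X_t$ rather than its analytic pullback, so one must check that the induced map to $\coh{1}{X_t}{\Omega^1_{X_t}(\END[\struct{X_t}]{E_t})}$ is compatible with \eqref{eq:cd2} and strong enough to detect the germ's vanishing from $\At{E_t}\,=\,0$. This is where the flatness of $\cat{F}$ over $S$ (it is locally free over $\struct{X}$ and $\pi$ is a submersion) and the coherence of $R^1\pi_*\cat{F}$ are used, so that the fibrewise vanishing of the relative Atiyah class propagates to the vanishing of $e(\at[S]{E})$ itself.
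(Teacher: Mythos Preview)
Your proof is correct and is essentially the paper's own argument: both feed the relative Atiyah class into the Leray five-term exact sequence for $\cat{F}=\Omega^1_{X/S}(\END[\struct{X}]{E})$, use the fibrewise hypothesis to kill its image in $\coh{0}{S}{R^1\pi_*\cat{F}}$, and then invoke $\coh{1}{S}{\pi_*\cat{F}}=0$ to conclude. The paper's only additional step is a cosmetic reformulation of $\at[S]{E}$ as $\Phi(1)$ for a connecting homomorphism $\Phi$ arising from an auxiliary short exact sequence, and the base-change subtlety you flag in your final paragraph is a point the paper itself leaves implicit.
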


\begin{proof}
Consider the relative Atiyah exact sequence in \eqref{eq:82}. Tensoring it by
$\Omega^1_{X/S}$ produces the exact sequence
\begin{equation}\label{q}
0\,\longrightarrow\, \Omega^1_{X/S}(\END[\struct{X}]{E})\,\longrightarrow\, \Omega^1_{X/S}(\cat{A}t_S(E))
\,\stackrel{q}{\longrightarrow}\,\Omega^1_{X/S}\otimes \cat{T}_{X/S}
\,\longrightarrow\, 0\, .
\end{equation}
Note that $\struct{X}\cdot \text{Id} \, \subset\, \text{End}(\cat{T}_{X/S})\,=\,
\Omega^1_{X/S}\otimes \cat{T}_{X/S}$. Define
$$
\Omega^1_{X/S}(\cat{A}t'_S(E))\, :=\, q^{-1}(\struct{X}\cdot \text{Id})\, \subset\,
\Omega^1_{X/S}(\cat{A}t_S(E))\, ,
$$
where $q$ is the projection in \eqref{q}. So we have the short exact sequence of sheaves
\begin{equation}\label{q2}
0\,\longrightarrow\, \Omega^1_{X/S}(\END[\struct{X}]{E})\,\longrightarrow\, \Omega^1_{X/S}(\cat{A}t'_S(E))
\,\stackrel{q}{\longrightarrow}\,\struct{X}\,\longrightarrow\, 0
\end{equation}
on $X$, where $\Omega^1_{X/S}(\cat{A}t'_S(E))$ is constructed above. Let
\begin{equation}\label{q3}
\Phi\, :\, {\mathbb C}\, =\, {\rm H}^0(X,\, \struct{X}\cdot \text{Id})\, \longrightarrow\,
{\rm H}^1(X,\, \Omega^1_{X/S}(\END[\struct{X}]{E}))
\end{equation}
be the homomorphism in the long exact sequence of cohomologies associated to the exact sequence
in \eqref{q2}. The relative Atiyah class $\at[S]{E}$ (see Corollary \ref{cor:4}) coincides
with $\Phi(1)\, \in\, {\rm H}^1(X,\, \Omega^1_{X/S}(\END[\struct{X}]{E}))$. Therefore, from
Corollary \ref{cor:4} it follows that 
$E$ admits a holomorphic $S$-connection if and only if
\begin{equation}\label{q4}
\Phi(1)\, =\,0\, .
\end{equation}

To prove the vanishing statement in \eqref{q4}, first note that
${\rm H}^1(X,\, \Omega^1_{X/S}(\END[\struct{X}]{E}))$ fits in the exact sequence
\begin{equation}\label{q5}
{\rm H}^1(S,\, \pi_*(\Omega^1_{X/S}(\END[\struct{X}]{E})))\, \stackrel{\beta_1}{\longrightarrow}\,
{\rm H}^1(X,\, \Omega^1_{X/S}(\END[\struct{X}]{E}))\, \stackrel{q_1}{\longrightarrow}\,
{\rm H}^0(S,\, R^1\pi_*(\Omega^1_{X/S}(\END[\struct{X}]{E})))\, ,
\end{equation}
where $\pi$ is the projection of $X$ to $S$.

The given condition that for every
$t \in S$, there is a holomorphic connection on the holomorphic vector bundle
$\varpi|_{E_t}\, :\, E_t \,\longrightarrow\, X_t$, implies that
$$
q_1(\Phi(1))\,=\, 0\, ,
$$
where $q_1$ is the homomorphism in \eqref{q5}. Therefore, from the exact sequence in
\eqref{q5} we conclude that
$$
\Phi(1)\, \in\, \beta_1({\rm H}^1(S,\, \pi_*(\Omega^1_{X/S}(\END[\struct{X}]{E}))))\, .
$$
Finally, the given condition that ${\rm H}^1(S,\, \pi_*(\Omega^1_{X/S}(\END[\struct{X}]{E})))\,=\, 0$.
implies that $\Phi(1)\, =\, 0$. Since \eqref{q4} holds, the vector bundle
$E$ admits a holomorphic $S$-connection.
\end{proof}

Take $\pi\,:\, X \,\longrightarrow\, S$ to be a surjective holomorphic proper 
submersion of relative dimension one, so $\pi^{-1}(t)$ is a compact connected Riemann surface, for 
every $t \in S$. Then, by the \emph{Atiyah-Weil} criterion given in 
\cite{A}, \cite{W} and \cite{BR} (Theorem \textbf{6.12}), we have the following:

\begin{corollary}\label{cor:3.21}
Let $\pi\,:\, X \,\longrightarrow\, S$ be a surjective holomorphic proper 
submersion such that $\pi^{-1}(t) \,=\, X_t$ is a compact connected Riemann surface for
every $t \,\in \,S$. Let $\varpi\,:\, E \,\longrightarrow\, X$ be a holomorphic vector bundle.
Suppose that for every $t \,\in\, S$, the degree of the indecomposable 
components of $E_t$ are zero and
$$\coh{1}{S}{\pi_*(\Omega^1_{X/S}(\END[\struct{X}]{E}))} \,=\, 0\, .$$
Then, $E$ admits a holomorphic $S$-connection.
\end{corollary}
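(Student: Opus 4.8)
The plan is to obtain this as an immediate consequence of Theorem \ref{thm:3l}, using the classical Atiyah--Weil criterion to verify the fiberwise hypothesis of that theorem. Recall that Theorem \ref{thm:3l} asserts that $E$ admits a holomorphic $S$-connection provided that (a) for every $t \in S$ the restricted bundle $\varpi|_{E_t}\,:\, E_t \,\longrightarrow\, X_t$ carries a holomorphic connection, and (b) $\coh{1}{S}{\pi_*(\Omega^1_{X/S}(\END[\struct{X}]{E}))}\,=\,0$. Hypothesis (b) is assumed in the corollary, so the only task is to check (a).

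To check (a), I would fix $t \in S$ and apply the Atiyah--Weil criterion, in the form recalled in \cite{A}, \cite{W} and \cite{BR} (Theorem~\textbf{6.12}), to the holomorphic vector bundle $E_t$ over the compact connected Riemann surface $X_t \,=\, \pi^{-1}(t)$. Since the hypotheses of the corollary stipulate that every indecomposable component of $E_t$ has degree zero, the criterion guarantees that $E_t$ admits a holomorphic connection. As $t \in S$ is arbitrary, this establishes condition (a) of Theorem \ref{thm:3l}.

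With both (a) and (b) in hand, Theorem \ref{thm:3l} applies directly and yields a holomorphic $S$-connection on $E$, which is the assertion. I do not anticipate any genuine obstacle: the corollary is simply the specialization of Theorem \ref{thm:3l} to relative dimension one, where the Atiyah--Weil criterion provides a clean numerical reformulation of the fiberwise connection condition. The only point deserving a brief remark is that the relative-dimension-one assumption is precisely what makes each fiber $X_t$ a compact connected Riemann surface, so that the quoted form of the Atiyah--Weil criterion is applicable fiber by fiber.
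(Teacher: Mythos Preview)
Your proposal is correct and follows exactly the paper's approach: the corollary is stated immediately after Theorem~\ref{thm:3l} with the remark that in relative dimension one the Atiyah--Weil criterion (cited to \cite{A}, \cite{W}, \cite{BR}) supplies the fiberwise holomorphic connections, and then Theorem~\ref{thm:3l} is invoked. There is nothing to add.
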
 

\section*{Acknowledgement}

We thank the referee for helpful advice. The first 
author is supported by a J. C. Bose Fellowship. 
The second author is very grateful to his Ph.D. advisor 
Prof. N. Raghavendra for introducing this topic  and 
useful discussions. The 
second author would like to acknowledge the Department 
of Atomic Energy, Government of India for 
providing the research grant.

\end{document}